\theoremstyle{plain}
\newtheorem{prop}{Proposition}
\numberwithin{prop}{section}
\newtheorem{thm}[prop]{Theorem}
\newtheorem{cor}[prop]{Corollary}
\newtheorem{lmm}[prop]{Lemma}
\newenvironment{mthm}[1]
  {\innermthm}
  {\endinnermthm}
\newenvironment{mcor}[1]
  {\innermcor}
  {\endinnermcor}
\theoremstyle{definition}
\newtheorem{rmk}[prop]{Remark}
\newtheorem{ex}[prop]{Example}
\newtheorem{defi}[prop]{Definition}
\newtheorem{claim}{Claim}
\numberwithin{equation}{section}
\newcommand{\normm}[1]{\ensuremath{\left\|#1\right\|}}
\newcommand{\gromnorm}[1]{\ensuremath{\normm{#1}_{\textnormal{Grom}}}}
\renewcommand{\emptyset}{\varnothing}
\DeclareMathOperator{\cl}{cl}
\DeclareMathOperator{\scl}{scl}
\DeclareMathOperator{\Imm}{Im}
\DeclareMathOperator{\area}{area}
\DeclareMathOperator{\rot}{rot}
\DeclareMathOperator{\Lk}{Lk}
\DeclareMathOperator{\rk}{rk}
\title{Isometric embeddings of surfaces for scl}
\author{Alexis Marchand}
\date{\today}
\address{DPMMS, Centre for Mathematical Sciences, Wilberforce Road, Cambridge CB3 0WB, United Kingdom}
\email{\href{mailto:aptm3@cam.ac.uk}{aptm3@cam.ac.uk}}
\begin{document}

\begin{abstract}
    Let $\varphi:F_1\to F_2$ be an injective morphism of free groups.
    If $\varphi$ is geometric (i.e. induced by an inclusion of oriented compact connected surfaces with nonempty boundary), then we show that $\varphi$ is an isometric embedding for stable commutator length.
    More generally, we show that if $T$ is a subsurface of an oriented compact (possibly closed) connected surface $S$, and $c$ is an integral $1$-chain on $\pi_1T$, then there is an isometric embedding $H_2(T,c)\to H_2(S,c)$ for the relative Gromov seminorm.
    Those statements are proved by finding an appropriate standard form for admissible surfaces and showing that, under the right homology vanishing conditions, such an admissible surface in $S$ for a chain in $T$ is in fact an admissible surface in $T$.
\end{abstract}

\maketitle

\section{Introduction}

    Stable commutator length is a function that measures the homological complexity of elements in a group. 
    In a topological space $X$, an element $w\in\pi_1X$ being homologically trivial means that a loop representing $w$ bounds a surface in $X$.
    Stable commutator length measures the minimal complexity of such a surface, in a stable sense.
    More precisely, if (a power of) $w$ is homologically trivial in $\pi_1X$, the \emph{stable commutator length} of $w$ is
    \[
        \scl_{\pi_1X}(w)=\inf_{f,\Sigma}\frac{-\chi^-(\Sigma)}{2n(\Sigma)},
    \]
    where the infimum is over all maps $f:\Sigma\rightarrow X$ from surfaces which send $\partial\Sigma$ to $w^{n(\Sigma)}$ for some $n(\Sigma)\in\mathbb{N}_{\geq1}$, and $\chi^-(\Sigma)$ denotes the \emph{reduced Euler characteristic} of $\Sigma$ (i.e. the Euler characteristic after discarding disc and sphere components). 
    Such maps $f$ are called \emph{admissible surfaces}.
    We also set $\scl_{\pi_1X}(w)=\infty$ if no power of $w$ is homologically trivial.
    This function $\scl_{\pi_1X}$ is an invariant of the fundamental group $\pi_1X$.
    We give more detailed definitions in \S{}\ref{sec:scl-gromnorm}.
    
    Computations of $\scl$ remain elusive, and there are very few groups in which we can obtain exact values.
    A major exception is the case of free groups, in which Calegari \cite{cal-scl} proved that $\scl$ is computable and has rational values.
    This was later generalised by Chen \cite{chen} to certain graphs of groups (encompassing previous results of Walker \cite{walker}, Calegari \cite{cal-sails}, Chen \cite{chen-freeprods}, Susse \cite{susse} and Clay--Forester--Louwsma \cite{cfl}).
    However, these examples are all, in some sense, one-dimensional.
    A major open question is whether or not $\scl$ is computable and rational in closed surface groups.

    \subsection*{Isometries for scl}
    
    The present paper aims to make progress towards understanding $\scl$ in free and surface groups by focusing on isometries.
    Every group homomorphism $\varphi:G\rightarrow H$ is $\scl$-nonincreasing in the sense that $\scl_G(w)\geq\scl_H\left(\varphi(w)\right)$ for all $w\in G$.
    We are interested in (injective) morphisms that preserve $\scl$, i.e. that satisfy $\scl_G(w)=\scl_H\left(\varphi(w)\right)$ for all $w\in G$ --- those morphisms are called \emph{isometric embeddings}\footnote{There are slight variations as to what different authors mean by isometries for $\scl$, and we try to clarify the terminology in Definition \ref{defi:isom-scl}.} for $\scl$.
    
    Previous isometry results include those of Calegari and Walker \cite{calegari-walker}, who proved that a generic morphism between free groups preserves $\scl$, as well as Chen \cite{chen}, who proved that certain morphisms of graphs of groups are isometric.
    We give more precise statements of their results in Theorem \ref{thm:isom-prev-results}.
    
    There are several reasons why one might be interested in isometries of $\scl$.
    One of them is that this can lead to a better understanding of the structure of the $\scl$-norm on the space $B_1(G;\mathbb{R})$ of real $1$-boundaries on $G$.
    Calegari's Rationality Theorem \cite{cal-scl} implies that, if $G$ is a free group, then the unit ball of the $\scl$ norm is a rational polyhedron, and Calegari \cite{cal-fnb} also proved that certain top dimensional faces of the $\scl$ norm ball are connected to dynamics via the rotation quasimorphism.
    Isometric endomorphisms of $\scl$ in a group $G$ capture the symmetries of the $\scl$ norm ball in $B_1(G;\mathbb{R})$ and could reveal more information about $\scl$ in $G$.
    
    Moreover, given an isometric embedding $\varphi:G\rightarrow H$ from a group $G$ in which we can compute $\scl$ to another group $H$ where $\scl$ is more mysterious, one can use knowledge about $\scl$ in $G$ to learn about $\scl$ in $H$.
    For instance, Chen \cite{chen}*{Corollary 3.12} uses his isometric embedding theorem to show that the $\scl$-spectrum of the Baumslag--Solitar group $BS(m,\ell)$ contains the $\scl$-spectrum of the free product $\mathbb{Z}/m\ast\mathbb{Z}/\ell$, which is better understood.
    Isometric embeddings from surfaces with boundary to closed surfaces are of particular interest to us, as they could potentially allow us to use what we know about $\scl$ in free groups to gain some ground in closed surface groups.
    
    We first consider $\varphi:F_1\rightarrow F_2$ a morphism of free groups.
    There is a condition that one might want to impose on $\varphi$ in order to prove that it is an isometric embedding: if $\scl_{F_1}(w)=\infty$, then it is natural to ask that $\scl_{F_2}\left(\varphi(w)\right)=\infty$.
    This is equivalent to saying that the induced morphism $\varphi_*:H_1\left(F_1\right)\rightarrow H_1\left(F_2\right)$ on abelianisations should be injective.
    An isometric embedding that satisfies this condition will be called a \emph{strong isometric embedding} (see Proposition \ref{prop:isom-strong-isom}).
    Our first result is that, if $\varphi$ is \emph{geometric} --- in the sense that it is induced by an inclusion of surfaces with boundary --- then this condition is sufficient:
    
    \begin{mthm}{\ref{thm:isom-scl}}[Isometric embedding for $\scl$]
        Let $S$ be an oriented, compact, connected surface with nonempty boundary and let $T\subseteq S$ be a subsurface.
        Consider the inclusion-induced map
        \[
            \iota:\pi_1T\rightarrow\pi_1S.
        \]
        If $\iota_*:H_1\left(\pi_1T\right)\hookrightarrow H_1\left(\pi_1S\right)$ is injective, then $\iota$ is a strong isometric embedding for $\scl$.
    \end{mthm}
    
    In fact, it follows from our proof that extremal surfaces are preserved by the isometric embedding of Theorem \ref{thm:isom-scl}.
    This is also the case for the rotation quasimorphism when it is extremal.
    See \S{}\ref{sec:extr} for a more detailed discussion.
    
    \subsection*{Generalisation to closed surfaces}
    
    One might wonder if Theorem \ref{thm:isom-scl} generalises to closed surfaces.
    The answer is negative, as a simple example shows:
    
    \begin{ex}\label{ex:non-isom}
        Consider the inclusion of surfaces $T\hookrightarrow S$ of Figure \ref{fig:ex:non-isom}.
        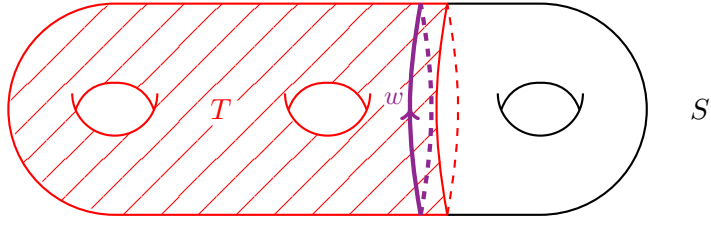
\begin{figure}[htb]
            \centering
            \begin{tikzpicture}[every node/.style={rectangle,draw=none,fill=none},scale=.7]
                \draw [draw=none,,pattern={Lines[angle=45,distance=10pt]},pattern color=red] (-2,0) -- (-1.73,-1) -- (-1,-1.73) -- (0,-2) -- (6.25,-2) -- (6,0) -- (6.25,2) -- (0,2) -- (-1,1.73) -- (-1.73,1) -- cycle;
            
                \draw [red,thick] (6.25,2) to (0,2) (0,2) to[bend right=45] (-2,0) (-2,0) to[bend right=45] (0,-2) (0,-2) to (6.25,-2);
                \draw [thick] (6.25,-2) to (8,-2) (8,-2) to[bend right=45] (10,0) (10,0) to[bend right=45] (8,2) (8,2) to (6.25,2);
                
                \draw [draw=none,fill=white] (0,0) ellipse[x radius=.73cm,y radius=.5cm];
                \draw [red,thick] (-0.73,0) to[bend left=37] (0,.5) (0,.5) to[bend left=37] (0.73,0)
                    (-0.8,.3) to[bend right=45] (0,-.5) (0,-.5) to[bend right=45] (0.8,.3);
                
                \draw [xshift=4cm,draw=none,fill=white] (0,0) ellipse[x radius=.73cm,y radius=.5cm];
                \draw [red,xshift=4cm,thick] (-0.73,0) to[bend left=37] (0,.5) (0,.5) to[bend left=37] (0.73,0)
                    (-0.8,.3) to[bend right=45] (0,-.5) (0,-.5) to[bend right=45] (0.8,.3);
                
                \draw [xshift=8cm,thick] (-0.73,0) to[bend left=37] (0,.5) (0,.5) to[bend left=37] (0.73,0)
                    (-0.8,.3) to[bend right=45] (0,-.5) (0,-.5) to[bend right=45] (0.8,.3);
                
                \draw [thick,red] (6.25,-2) to[bend left=10] (6.25,2);
                \draw [thick,red,dashed] (6.25,-2) to[bend right=10] (6.25,2);
                
                \draw [ultra thick,Plum] (5.75,-2) to[bend left=10] (5.75,2);
                \draw [ultra thick,Plum,dashed] (5.75,-2) to[bend right=10] (5.75,2);
                \draw [ultra thick,Plum,->] (5.55,-.001) -- (5.55,.001);
                
                \draw [draw=none,fill=white] (2,0) circle[radius=.35cm];
                \draw node [red] at (2,0) {\large$T$};
                \draw [draw=none,fill=white] (5.25,.2) circle[radius=.22cm];
                \draw node [Plum] at (5.25,.2) {$w$};
                \draw node at (11,0) {\large$S$};
            \end{tikzpicture}
            \caption{An inclusion of surfaces that is $H_1$-injective but not isometric for $\scl$.}
            \label{fig:ex:non-isom}
        \end{figure}        
        Then the induced map $H_1(T)\rightarrow H_1(S)$ is injective.
        However, let $w\in\pi_1T\hookrightarrow\pi_1S$ be the class of the boundary loop of $T$.
        It is a general fact that an oriented compact surface is extremal for its boundary (see \cite{cal-scl}*{Lemma 4.62}), and therefore
        \[
            \scl_{\pi_1T}(w)=\frac{-\chi^-(T)}{2}=\frac{3}{2}.
        \]
        On the other hand, there is a surface $\Sigma$ of genus $1$ with one boundary component bounding $w$ in $S$, so that
        \[
            \scl_{\pi_1S}(w)\leq\frac{-\chi^-(\Sigma)}{2}=\frac{1}{2}.
        \]
        Therefore, the inclusion-induced map $\pi_1T\hookrightarrow\pi_1S$ does not preserve $\scl$.
    \end{ex}
    
    However, this is not the end of the story.
    Given an admissible surface $\Sigma\rightarrow S$ bounding a power of a given loop $w\in\pi_1T\hookrightarrow\pi_1S$, one can consider the relative homology class represented by $\Sigma$ in $H_2(S,w)$.
    In Example \ref{ex:non-isom}, the two admissible surfaces $T\rightarrow S$ and $\Sigma\rightarrow S$ represent classes in $H_2(S,w)$ that differ by the fundamental class $[S]\in H_2(S)\hookrightarrow H_2(S,w)$.
    Note that this is a phenomenon that cannot occur if $S$ has nonempty boundary, because then $H_2(S)=0$ and all admissible surfaces bounding $w$ projectively represent the same class in $H_2(S,w)$.
    One might therefore ask whether we still get an isometric embedding when the relative homology class is fixed.

    \subsection*{Isometries for the relative Gromov seminorm}
    
    We can make this more precise by introducing the relative Gromov seminorm on $H_2(S,w;\mathbb{Q})$.
    For a topological space $X$, the Gromov seminorm is defined on $H_n(X;\mathbb{Q})$ as the quotient seminorm of the $\ell^1$-norm on the space of $n$-cycles.
    In degree $2$, the Gromov seminorm of a class $\alpha\in H_2(X;\mathbb{Q})$ can also be interpreted as the infimum of $-2\chi^-(\Sigma)/n(\Sigma)$ over maps $\Sigma\rightarrow X$ from closed surfaces representing $n(\Sigma)\alpha$ for some $n(\Sigma)\in\mathbb{N}_{\geq1}$.
    
    Analogously, given $w\in\pi_1X$, we define the \emph{relative Gromov seminorm} of a class $\alpha\in H_2(X,w;\mathbb{Q})$ by
    \[
        \gromnorm{\alpha}=\inf_{f,\Sigma}\frac{-2\chi^-(\Sigma)}{n(\Sigma)}
    \]
    where the infimum is over all maps $f:\Sigma\rightarrow X$ from surfaces with boundary representing $n(\Sigma)\alpha$ for some $n(\Sigma)\in\mathbb{N}_{\geq1}$.
    Hence, $\scl(w)$ can be reinterpreted as the infimum of the relative Gromov seminorm on an affine subspace in $H_2(X,w;\mathbb{Q})$:
    \[
        \scl(w)=\frac{1}{4}\inf_{\substack{\alpha\in H_2(X,w;\mathbb{Q})\\\partial\alpha=\left[S^1\right]}}\gromnorm{\alpha}.
    \]
    See \S{}\ref{subsec:rel-gromnorm} for more details on the relative Gromov seminorm and its relation to $\scl$.
    
    Asking whether an embedding of surfaces $T\hookrightarrow S$ is isometric when a relative homology class is fixed amounts to asking, given $w\in\pi_1T$, whether or not the inclusion-induced map $H_2(T,w)\rightarrow H_2(S,w)$ is isometric for the relative Gromov seminorm.
    
    We answer this question in the affirmative:
    
    \begin{mthm}{\ref{thm:isom-gromnorm}}[Isometric embedding for the relative Gromov seminorm]
        Let $S$ be an oriented, compact, connected surface, let $T\subseteq S$ be a $\pi_1$-injective subsurface, and let $c\in C_1\left(\pi_1T;\mathbb{Z}\right)$ be an integral chain in $T$.
        Then the inclusion-induced map
        \[
            \iota:H_2(T,c;\mathbb{Q})\hookrightarrow H_2(S,c;\mathbb{Q})
        \]
        is an injective isometric embedding for $\gromnorm{\cdot}$.
    \end{mthm}
    
    Note that, if $S$ has nonempty boundary and $w$ is homologically trivial, then $\scl_{\pi_1S}(w)=\frac{1}{4}\gromnorm{\alpha}$ where $\alpha$ is a generator of $H_2(S,w)\cong H_1\left(S^1\right)$.
    Therefore, in this context, Theorem \ref{thm:isom-gromnorm} is really a statement about $\scl$.
    But it has no assumption on $H_1$-injectivity, so it implies a stronger version of Theorem \ref{thm:isom-scl}:
    
    \begin{mcor}{\ref{cor:isom-scl}}
        Let $S$ be an oriented, compact, connected surface with nonempty boundary and let $T\subseteq S$ be a $\pi_1$-injective subsurface.
        Then the inclusion-induced map
        \[
            \iota:\pi_1T\hookrightarrow\pi_1S
        \]
        is an isometric embedding for $\scl$.
    \end{mcor}
    
    One of the points of this paper is to promote the study of the relative Gromov seminorm on $H_2(X,w)$ as an intermediate step in the computation of $\scl(w)$ when $X$ has nontrivial second homology.
    This approach separates the problem of computing $\scl$ into two steps: one can try to understand the relative Gromov seminorm first, and then investigate the infimum in $H_2(X,w)$.
    Hence, some known results about $\scl$ in free groups might generalise to the relative Gromov seminorm in closed surface groups for example, giving partial information about $\scl$ there.
    
    Another instance of this phenomenon is that, even though extremal surfaces are not known to exist for arbitrary elements of closed surface groups, Calegari \cite{cal-fnb}*{Remark 3.18} proved that, if $S$ is a closed surface and $w\in\pi_1S$, then $w$ rationally bounds a positive immersed surface in $S$, and this immersed surface is extremal in its relative homology class.
    In particular, there is a class $\alpha\in H_2(S,w)$ such that $\gromnorm{\alpha}$ is rational.
    
    The hope is that ideas that were successfully applied to the study of $\scl$ in free groups could be used to understand the relative Gromov seminorm in surface groups and wider classes of groups.

    \subsection*{Strategy of proof}
    
    Let $T\subseteq S$ be a subsurface and let $w\in\pi_1T$.
    The general idea to prove Theorems \ref{thm:isom-scl} and \ref{thm:isom-gromnorm} is the following: let $f:\left(\Sigma,\partial\Sigma\right)\rightarrow\left(S,w\right)$ be an admissible surface for $w$ in $S$.
    The goal is to modify $f$ to an admissible surface for $w$ in $T$, as this will show that $\scl_{\pi_1T}(w)\leq\scl_{\pi_1S}(w)$, and the reverse inequality always holds.
    Note that the assumption of Theorem \ref{thm:isom-scl} that $H_1\left(\pi_1T\right)\rightarrow H_1\left(\pi_1S\right)$ be injective is equivalent (in the case where $\partial S\neq\emptyset$) to $H_2(S,T)=0$, which means that every $2$-chain in $S$ with boundary in $T$ does in fact lie in $T$.
    In particular, if $b\in C_2\left(\Sigma\right)$ is a $2$-chain representing the fundamental class $[\Sigma]\in H_2\left(\Sigma,\partial\Sigma\right)$, then this implies that $f_*b\in C_2(T)$.
    If $f$ is an embedding, then we can conclude that $f(\Sigma)\subseteq T$.
    However, this does not follow in general since it might be for example that a $2$-cell $\sigma$ in $S\smallsetminus T$ is not visible in the $2$-chain $f_*b$ because it appears once positively and once negatively, but still $\sigma\subseteq f(\Sigma)$.
    
    We cannot in general assume that admissible surfaces are embedded.
    Our strategy is therefore to find a standard form for admissible surfaces that is general enough to allow one to compute $\scl$ or the relative Gromov seminorm, but nice enough to make the above argument work.
    Our standard form is described in \S{}\ref{sec:std-form} and in particular in Proposition \ref{prop:std-form}; we expect it to be a helpful foundation for further study of $\scl$ in surface groups.
    With this standard form in hand, we can adapt the above argument to show that, under appropriate homology vanishing conditions, any admissible surface in $S$ for $w\in\pi_1T$ is in fact contained in $T$.
    This is the content of Theorem \ref{thm:main}, which implies Theorems \ref{thm:isom-scl} and \ref{thm:isom-gromnorm}.

    \subsection*{Outline of the paper}
    
    In \S{}\ref{sec:scl-gromnorm}, we recall the algebraic and topological definitions of $\scl$, and introduce the relative Gromov seminorm. We then go on to introduce $2$-complexes and discuss some of their topological properties that are relevant for our proof in \S{}\ref{sec:2comp}. In \S{}\ref{sec:std-form}, we show how to reduce admissible surfaces for computations of $\scl$ and $\gromnorm{\cdot}$ in surface groups to a certain standard form. Our main results are proved in \S{}\ref{sec:isom}. Finally, \S{}\ref{sec:extr} is a discussion of how extremal surfaces and quasimorphisms behave with respect to our isometric embeddings.
    
    \subsection*{Acknowledgements}
    
    First and foremost, I would like to thank my supervisor Henry Wilton for our weekly discussions and his constant support.
    Thanks are also due to Lvzhou Chen, Francesco Fournier-Facio, and Kevin Li for helpful conversations, as well as to the anonymous referee for careful reading and detailed comments.
    Financial support from an EPSRC PhD studentship is gratefully acknowledged.

\section{Scl and the relative Gromov seminorm}\label{sec:scl-gromnorm}

    \subsection{Stable commutator length algebraically}
    
    We give the definition of commutator length and stable commutator length via products of commutators. We'll work with $1$-chains throughout this paper, but the reader can harmlessly forget about chains and think about elements of a group.
    
    Let $G$ be a group. A \emph{commutator} in $G$ is an expression of the form $[a,b]=aba^{-1}b^{-1}$, for some $a,b\in G$.
    The \emph{commutator length} $\cl_G(w)$ of an element $w\in G$ is its word length with respect to the set of all commutators:
    \begin{multline*}
        \cl_G(w)=\inf\left\{g\geq1\mid\exists a_1,b_1,\dots,a_g,b_g\in G,\:w=\left[a_1,b_1\right]\cdots\left[a_g,b_g\right]\right\}
        \\\in\mathbb{N}_{\geq0}\cup\{\infty\},
    \end{multline*}
    where we agree that $\inf\emptyset=\infty$.
    
    More generally, given finitely many elements $w_1,\dots,w_k\in G$, we set
    \[
        \cl_G\left(w_1+\cdots+w_k\right)=\inf_{t_i\in G}\cl_G\left(w_1\left(t_1w_2t_1^{-1}\right)\cdots\left(t_{k-1}w_kt_{k-1}^{-1}\right)\right).
    \]
    
    Given $R=\mathbb{Z}$ or $\mathbb{Q}$ or $\mathbb{R}$, we'll denote by $C_n(G;R)$ the group of $n$-chains over $R$, i.e. the free $R$-module with basis $G^n$. These form a chain complex $C_*(G;R)$, called the \emph{bar complex}. We'll write $Z_n(G;R)$ for the group of $n$-cycles and $B_n(G;R)$ for the group of $n$-boundaries. See \cite{weibel}*{Chapter 6} for more details.
    Note that there are natural inclusions $C_n(G;\mathbb{Z})\hookrightarrow C_n(G;\mathbb{Q})\hookrightarrow C_n(G;\mathbb{R})$.
    A chain in $C_n(G;\mathbb{R})$ will be called \emph{real}, a chain in $C_n(G;\mathbb{Q})$ will be called \emph{rational}, and a chain in $C_n(G;\mathbb{Z})$ will be called \emph{integral}.
    
    \begin{defi}
        Given an integral $1$-chain $c=\sum_in_iw_i\in C_1\left(G;\mathbb{Z}\right)$ (with $n_i\in\mathbb{Z}$, $w_i\in G$), the \emph{stable commutator length} of $c$ is defined by
        \[
            \scl_G\left(c\right)=\lim_{m\to\infty}\frac{\cl_G\left(\sum_{i}\left(w_i^{n_i}\right)^m\right)}{m}.
        \]
        The map $\scl_G:C_1\left(G;\mathbb{Z}\right)\rightarrow\left[0,\infty\right]$ is then extended to $C_1\left(G;\mathbb{Q}\right)$ by linearity on rays, and to $C_1\left(G;\mathbb{R}\right)$ by continuity.
    \end{defi}
    
    For more details on why these definitions make sense, we refer the reader to Calegari's book \cite{cal-scl}*{\S{}2.6}.
    
    Observe that, given $c\in C_1\left(G;\mathbb{R}\right)$, we have $\scl_G(c)<\infty$ if and only if $c\in B_1(G;\mathbb{R})$.
    If in addition $H_1\left(G\right)$ is torsion-free, then it is also true that, given $c\in C_1\left(G;\mathbb{Z}\right)$, we have $\scl_G(c)<\infty$ if and only if $c\in B_1(G;\mathbb{Z})$.

    \subsection{Isometric embeddings}
    
    The following is immediate from the definition:
    
    \begin{prop}[Monotonicity of $\scl$]\label{prop:monotonicity-scl}
        Let $\varphi:G\rightarrow H$ be a group homomorphism. Then:
        \begin{enumerate}
            \item For any $w_1,\dots,w_k\in G$, the following inequality holds:
            \[
                \cl_G\left(w_1+\cdots+w_k\right)\geq\cl_H\left(\varphi\left(w_1\right)+\cdots+\varphi\left(w_k\right)\right).
            \]
            \item For any $c\in C_1\left(G;\mathbb{R}\right)$, the following inequality holds:
            \[
                \scl_G\left(c\right)\geq\scl_H\left(\varphi\left(c\right)\right).
            \]
        \end{enumerate}
    \end{prop}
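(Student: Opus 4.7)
The plan is to prove part (i) directly from the definition of commutator length via conjugated products, and then deduce part (ii) by passing to the limit defining $\scl$ and using the extensions to rational and real chains.

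For part (i), I would start with an expression realizing (or nearly realizing) $\cl_G(w_1+\cdots+w_k)$. By definition, this infimum is over choices of conjugators $t_i\in G$ and decompositions
\[
    w_1\bigl(t_1w_2t_1^{-1}\bigr)\cdots\bigl(t_{k-1}w_kt_{k-1}^{-1}\bigr)=[a_1,b_1]\cdots[a_g,b_g].
\]
Applying the homomorphism $\varphi$ to both sides, and using the fact that $\varphi$ commutes with conjugation and with the commutator bracket, I obtain
\[
    \varphi(w_1)\bigl(\varphi(t_1)\varphi(w_2)\varphi(t_1)^{-1}\bigr)\cdots=[\varphi(a_1),\varphi(b_1)]\cdots[\varphi(a_g),\varphi(b_g)].
\]
This realises $\varphi(w_1)+\cdots+\varphi(w_k)$ as a conjugated product of $g$ commutators in $H$, witnessing $\cl_H\bigl(\varphi(w_1)+\cdots+\varphi(w_k)\bigr)\leq g$. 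Taking the infimum over $g$ on the left gives the claimed inequality.

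For part (ii), I first handle the case of an integral chain $c=\sum_i n_iw_i$. By part (i) applied to the $m$-fold ``chain power'' $\sum_i(w_i^{n_i})^m$, I get
\[
    \cl_G\!\Bigl(\sum_i(w_i^{n_i})^m\Bigr)\geq\cl_H\!\Bigl(\sum_i(\varphi(w_i)^{n_i})^m\Bigr)
\]
for every $m\geq 1$. Dividing by $m$ and letting $m\to\infty$, the limits defining $\scl_G(c)$ and $\scl_H(\varphi(c))$ both exist in $[0,\infty]$ and the inequality is preserved, giving $\scl_G(c)\geq\scl_H(\varphi(c))$ for integral $c$. I then extend to rational chains using the stated linearity of $\scl$ on rays (both sides scale by the same positive rational factor), and finally to real chains using the continuity extension in the definition, since the map $c\mapsto\varphi(c)$ is linear and continuous.

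There is no real obstacle: the content is just that $\varphi$ respects the algebraic operations appearing in the definitions of $\cl$ and $\scl$, which is why the author describes the proposition as immediate. The only mildly delicate point is checking that the extension from integral to real chains preserves the inequality, but this is immediate once one knows both functionals are continuous in the relevant sense on $C_1(G;\mathbb{R})$ and $C_1(H;\mathbb{R})$.
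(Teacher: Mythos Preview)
Your proof is correct and is exactly the argument the paper has in mind: the author gives no proof at all, stating only that the proposition ``is immediate from the definition,'' and your write-up simply unpacks that immediacy in the natural way.
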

    
    Hence, a group homomorphism is always $\scl$-nonincreasing, and we would like to understand when a group homomorphism preserves $\scl$.
    
    \begin{defi}\label{defi:isom-scl}
        Let $\varphi:G\rightarrow H$ be a group homomorphism.
        \begin{itemize}
            \item We say that $\varphi$ is \emph{$\scl$-preserving} if for every $1$-boundary $c\in B_1(G;\mathbb{R})$, the following equality holds:
            \begin{equation}
                \scl_G\left(c\right)=\scl_H\left(\varphi(c)\right).\label{eq:isom-scl}
            \end{equation}
            \item We say that $\varphi$ is an \emph{isometric embedding} for $\scl$ if it is injective and $\scl$-preserving.
            \item We say that $\varphi$ is a \emph{strong isometric embedding} for $\scl$ if it is injective and $\left(\ref{eq:isom-scl}\right)$ holds for every $1$-chain $c\in C_1(G;\mathbb{R})$.
        \end{itemize}
    \end{defi}
    
    \begin{rmk}
        In Definition \ref{defi:isom-scl}, replacing $\mathbb{R}$ with $\mathbb{Q}$ or $\mathbb{Z}$ does not change what it means for a group homomorphism to be $\scl$-preserving or a (strong) isometric embedding for $\scl$.
    \end{rmk}
    
    It is clear that a strong isometric embedding for $\scl$ is also an isometric embedding since $B_1(G;\mathbb{R})\subseteq C_1(G;\mathbb{R})$. The following clarifies the relation between isometries and strong isometries:
    
    \begin{prop}\label{prop:isom-strong-isom}
        Let $\varphi:G\rightarrow H$ be an isometric embedding for $\scl$. Then $\varphi$ is a strong isometric embedding if and only if the induced map
        \[
            \varphi_*:H_1(G;\mathbb{Q})\rightarrow H_1(H;\mathbb{Q})
        \]
        is injective.
    \end{prop}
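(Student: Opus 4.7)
The plan is to leverage the elementary characterisation, recorded in the excerpt just before the proposition, that for $c\in C_1(G;\mathbb{R})$ one has $\scl_G(c)<\infty$ if and only if $c\in B_1(G;\mathbb{R})$. The notions ``isometric embedding'' and ``strong isometric embedding'' differ only in that the latter also asks for the equality $(\ref{eq:isom-scl})$ on chains outside $B_1(G;\mathbb{R})$, where $\scl_G(c)=\infty$. So the gap between the two notions is entirely homological: $\varphi$ is a strong isometric embedding precisely when $\varphi(c)\in B_1(H;\mathbb{R})$ forces $c\in B_1(G;\mathbb{R})$, i.e.\ precisely when $\varphi_*:H_1(G;\mathbb{R})\to H_1(H;\mathbb{R})$ is injective.

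First I would spell out both directions of that equivalence. If $\varphi_*$ on $H_1(\cdot;\mathbb{R})$ fails to be injective, pick $c\in C_1(G;\mathbb{R})$ representing a nonzero class $[c]$ with $\varphi_*[c]=0$; then $c\notin B_1(G;\mathbb{R})$ while $\varphi(c)\in B_1(H;\mathbb{R})$, giving $\scl_G(c)=\infty$ but $\scl_H(\varphi(c))<\infty$, contradicting strong isometry. Conversely, if $\varphi_*$ is injective on $H_1$, then on $B_1(G;\mathbb{R})$ the equality $\scl_G(c)=\scl_H(\varphi(c))$ is the isometric embedding hypothesis, while on $C_1(G;\mathbb{R})\setminus B_1(G;\mathbb{R})$ both sides are infinite by the finiteness characterisation.

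Second I would translate between $\mathbb{R}$- and $\mathbb{Q}$-coefficients. Because $H_1(G;\mathbb{R})=H_1(G;\mathbb{Q})\otimes_\mathbb{Q}\mathbb{R}$ and similarly for $H$, and $\mathbb{R}$ is faithfully flat over $\mathbb{Q}$, injectivity of $\varphi_*$ on $\mathbb{Q}$-coefficients is equivalent to injectivity on $\mathbb{R}$-coefficients, matching the condition in the statement.

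There is no real obstacle here; all the content sits in the finiteness characterisation of $\scl$ that the excerpt has already flagged. The remainder is bookkeeping, plus the flatness step needed to change coefficients from $\mathbb{R}$ to $\mathbb{Q}$.
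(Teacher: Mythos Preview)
Your proposal is correct and follows essentially the same route as the paper: both identify the gap between ``isometric'' and ``strong isometric'' as the question of whether $\varphi$ reflects $1$-boundaries, which is exactly injectivity on $H_1$. The only cosmetic difference is that the paper works directly over $\mathbb{Q}$ (invoking the earlier remark that the definitions are insensitive to the coefficient ring and noting $C_1(G;\mathbb{Q})=Z_1(G;\mathbb{Q})$), whereas you argue over $\mathbb{R}$ first and then pass to $\mathbb{Q}$ via flatness; both are fine.
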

    \begin{proof}
        Note that $\varphi$ being a strong isometric embedding means that it preserves the stable commutator length of all chains in $C_1(G;\mathbb{Q})$, not just those in $B_1(G;\mathbb{Q})$.
        Equivalently, for each $c\in C_1(G;\mathbb{Q})$ such that $\varphi(c)$ is a $1$-boundary, $c$ itself is a $1$-boundary.
        But the boundary map $C_1\left(G;\mathbb{Q}\right)\rightarrow C_0\left(G;\mathbb{Q}\right)$ is the zero map, so $C_1\left(G;\mathbb{Q}\right)=Z_1\left(G;\mathbb{Q}\right)$.
        Hence, $\varphi$ is a strong isometric embedding if and only if the preimage under $\varphi:Z_1(G;\mathbb{Q})\rightarrow Z_1(H;\mathbb{Q})$ of $B_1(H;\mathbb{Q})$ is precisely $B_1(G;\mathbb{Q})$, which means that the induced map
        \[
            \varphi_*:H_1(G;\mathbb{Q})\rightarrow H_1(H;\mathbb{Q})
        \]
        is injective.
    \end{proof}
    
    We summarise here some known results about isometries of $\scl$, focusing on free groups:
    \begin{thm}
        \begin{enumerate}
            \item Any left-invertible map $\varphi:G\rightarrow H$ is a strong isometric embedding for $\scl$. (This follows from Proposition \ref{prop:monotonicity-scl}.)
            
            \item \emph{(Calegari \cite{cal-sails}*{Corollary 3.16})} Let $F_m$ and $F_n$ be free groups with respective free bases $\left(a_1,\dots,a_m\right)$ and $\left(b_1,\dots,b_n\right)$, with $m\leq n$, and let $\varphi:F_m\rightarrow F_n$ be given by
            \[
                \varphi:a_i\mapsto b_i^{k_i},
            \]
            with $k_i\in\mathbb{Z}\smallsetminus\{0\}$. Then $\varphi$ is a strong isometric embedding for $\scl$.
            
            \item \emph{(Chen \cite{chen}*{Theorem 3.8})} Let $\mathbb{X},\mathbb{Y}$ be graphs of groups such that $\scl$ vanishes on all vertex groups. Assume that we are given an edge-injective morphism $h:X\rightarrow Y$ between the underlying graphs, monomorphisms $h_v:X_v\hookrightarrow Y_{h(v)}$ between the vertex groups, and isomorphisms $h_e:X_e\xrightarrow{\cong}Y_{h(e)}$ between the edge groups, that commute with the inclusions of the edge groups in the vertex groups, and such that each map $h_v$ induces a morphism that is injective in homology on the sum of the images of the incident edge groups. Then there is an induced map $\underline{h}:\pi_1\mathbb{X}\rightarrow\pi_1\mathbb{Y}$, and $\underline{h}$ is an isometric embedding for $\scl$.
            
            \item \emph{(Calegari--Walker \cite{calegari-walker}*{Theorem 3.16})} Let $F_m$ and $F_n$ be free groups of respective ranks $m$ and $n$. Then there is a constant $C>1$ such that a random homomorphism $\varphi:F_m\rightarrow F_n$ of length $k$ is $\scl$-preserving with probability $1-O\left(C^{-k}\right)$.
        \end{enumerate}
        \label{thm:isom-prev-results}
    \end{thm}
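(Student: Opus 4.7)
The statement collects four separate results, so my plan is to handle (i) directly and defer to the cited literature for the remaining parts.

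Part (i) follows in a single line from Proposition \ref{prop:monotonicity-scl}. If $\psi : H \to G$ is a left inverse of $\varphi$, then for any $c \in C_1(G;\mathbb{R})$ applying monotonicity first to $\varphi$ and then to $\psi$ yields
\[
    \scl_G(c) \geq \scl_H(\varphi(c)) \geq \scl_G(\psi(\varphi(c))) = \scl_G(c),
\]
forcing equality throughout. Since this holds on all of $C_1(G;\mathbb{R})$ (not merely $B_1$) and $\varphi$ is injective by virtue of having a left inverse, $\varphi$ is a strong isometric embedding.

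For parts (ii)--(iv) I would quote the original sources. The substance of (ii) is Calegari's sails encoding of $\scl$ in free groups as a linear program over a rational polytope: the diagonal power map $a_i \mapsto b_i^{k_i}$ induces a map of polytopes which preserves the relevant extrema, and reading off the optima on each side gives the equality of $\scl$. Part (iii) is Chen's graph-of-groups theorem, whose proof decomposes admissible surfaces in $\pi_1\mathbb{Y}$ into vertex and edge pieces along the preimages of edges in the Bass-Serre tree; the hypothesis that $\scl$ vanishes on vertex groups concentrates all reduced Euler characteristic on the edge pieces, and the homological injectivity condition on each $h_v$ is exactly what is needed to pull the pieces back consistently to $\pi_1\mathbb{X}$. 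Part (iv) is the delicate Calegari--Walker random-word estimate, proved through their fatgraph machinery combined with probabilistic counting of reductions among random words of length $k$.

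The main obstacle, in terms of genuine mathematical content, lies in (iii) and (iv). Item (i) is immediate from monotonicity and (ii) rests on a polyhedral duality already developed in \cite{cal-sails}, but (iii) requires a full surface decomposition compatible with Bass-Serre theory and $H_1$-data, and (iv) demands combinatorial probability estimates on fatgraph reductions; neither is within reach of an elementary argument, and I would sketch the intuition for each before citing the proofs in full.
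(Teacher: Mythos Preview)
Your proposal is correct and matches the paper's own treatment: the paper gives no proof environment for this theorem at all, merely embedding the justification for (i) as a parenthetical (``This follows from Proposition \ref{prop:monotonicity-scl}'') and citing the original sources for (ii)--(iv). Your explicit one-line derivation of (i) from monotonicity is exactly what that parenthetical intends, and your decision to defer (ii)--(iv) to the literature is precisely what the paper does.
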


    \subsection{Stable commutator length topologically}\label{subsec:scl-topo}
    
    Stable commutator length can be given a topological interpretation, and we will use this interpretation as a working definition throughout this paper.
    
    Fix a topological space $X$ with $\pi_1X=G$, and let $c=\sum_in_iw_i\in C_1(G;\mathbb{Z})$ be an integral chain (with $n_i\in\mathbb{Z}$, $w_i\in G$).
    We assume that the $w_i$'s are pairwise distinct, so that this decomposition of $c$ is unique.
    For each $i$, we can pick a loop $\gamma_i:S^1\rightarrow X$ representing the conjugacy class of $w_i^{n_i}$ in $X$, where $S^1$ is the (oriented) circle.
    Putting those together, we get a map $\gamma:\coprod_iS^1\rightarrow X$.
    Note that $\gamma$ is uniquely defined up to homotopy.
    
    An \emph{admissible surface}\footnote{Note that, contrary to the standard definition \cite{cal-scl}*{\S{}2.6.1}, we impose no condition on $\partial f_*\left[\partial\Sigma\right]$ at this point. The reason why we are doing this should become clear in \S{}\ref{subsec:rel-gromnorm}: we will want to consider admissible surfaces for all classes in $H_2(X,c)$, not just those mapping to $\textstyle\left[\coprod_iS^1\right]$ under $\partial:H_2(X,c)\rightarrow H_1\left(\coprod_iS^1\right)$.} for $c$ in $X$ is the data of an oriented compact (possibly disconnected) surface $\Sigma$, and of maps $f:\Sigma\rightarrow X$ and $\partial f:\partial\Sigma\rightarrow\coprod_iS^1$ making the following diagram commute:
    \begin{equation}
        \vcenter{\hbox{\begin{tikzpicture}[every node/.style={draw=none,fill=none,rectangle}]
            \node (A) at (0,1.5) {$\partial\Sigma$};
            \node (B) at (2,1.5) {$\Sigma$};
            \node (Ap) at (0,0) {$\coprod_iS^1$};
            \node (Bp) at (2,0) {$X$};
            
            \draw [->] (A) -> (B) node [midway,above] {$\iota$};
            \draw [->] (Ap) -> (Bp) node [midway,above] {$\gamma$};
            \draw [->] (A) -> (Ap) node [midway,left] {$\partial f$};
            \draw [->] (B) -> (Bp) node [midway,left] {$f$};
        \end{tikzpicture}}}
        \label{eq:comm-diag-adm-surf}
    \end{equation}
    where $\iota:\partial\Sigma\hookrightarrow\Sigma$ is the inclusion.
    Such an admissible surface will be denoted by $f:\left(\Sigma,\partial\Sigma\right)\rightarrow\left(X,c\right)$.
    
    The complexity of a compact connected surface $\Sigma$ is measured by its \emph{reduced Euler characteristic} $\chi^-(\Sigma)=\min\left\{0,\chi(\Sigma)\right\}$.
    If $\Sigma$ is disconnected, we set $\chi^-(\Sigma)=\sum_K\chi^-(K)$, where the sum ranges over all connected components $K$ of $\Sigma$.
    
    \begin{prop}[{Calegari \cite{cal-scl}*{Proposition 2.74}}]\label{prop:scl-topo}
        If $X$ is a space with $\pi_1X=G$, and $c\in C_1(G;\mathbb{Z})$ is an integral chain, then there is an equality
        \[
            \scl_G(c)=\inf_{f,\Sigma}\frac{-\chi^-(\Sigma)}{2n(\Sigma)},
        \]
        where the infimum is taken over all admissible surfaces $f:(\Sigma,\partial\Sigma)\rightarrow(X,c)$ such that $\partial f_*\left[\partial\Sigma\right]=n(\Sigma)\left[\coprod_iS^1\right]$ in $H_1\left(\coprod_iS^1;\mathbb{Z}\right)$ for some $n(\Sigma)\in\mathbb{N}_{\geq1}$.
        
        Such an admissible surface will be called an \emph{admissible surface for $\scl_{G}(c)$}.
    \end{prop}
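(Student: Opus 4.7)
The plan is to set up a dictionary between algebraic relations in $G$ involving products of commutators and continuous maps from oriented surfaces to a $K(G,1)$. Since the diagram (\ref{eq:comm-diag-adm-surf}) and the quantities $\chi^-(\Sigma)$, $n(\Sigma)$ are homotopy-invariant, I would first reduce to the case $X = K(G,1)$.

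For the inequality $\scl_G(c) \geq \inf_{f,\Sigma} (-\chi^-(\Sigma))/(2n(\Sigma))$, fix $m \geq 1$ and let $g_m = \cl_G(\sum_i w_i^{n_i m})$. By the definition of $\cl$ on chains, there exist $t_1, \dots, t_{k-1} \in G$ and $a_j, b_j \in G$ with
\[
    w_1^{n_1 m} \cdot t_1 w_2^{n_2 m} t_1^{-1} \cdots t_{k-1} w_k^{n_k m} t_{k-1}^{-1} = \prod_{j=1}^{g_m} [a_j, b_j].
\]
In $K(G,1)$, this relation is realised by a map from a once-punctured genus-$g_m$ surface; cutting along $k-1$ disjoint properly embedded arcs that separate the sub-arcs of the boundary labelled by the different $w_i^{n_i m}$ yields an admissible surface $\Sigma_m$ for $\scl_G(c)$ of genus $g_m$ with $k$ boundary components, the $i$-th mapping to $\gamma_i$ with degree $m$. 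Then $n(\Sigma_m) = m$ and $-\chi^-(\Sigma_m) \leq 2g_m + k - 2$, so the right-hand infimum is at most $g_m / m + (k - 2) / (2m)$; letting $m \to \infty$ gives the desired inequality.

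For the reverse direction, I start with an admissible surface $f : (\Sigma, \partial\Sigma) \to (X, c)$ for $\scl_G(c)$ with $n(\Sigma) = n$, and first assume $\Sigma$ is connected: tubing components together raises $-\chi^-$ by $2$ per tube, an error that will be absorbed by the iteration below. If $\Sigma$ has genus $g$ and boundary components $\beta_1, \dots, \beta_\ell$ with $\beta_j$ wrapping $\gamma_{i(j)}$ with degree $d_j$ (so $\sum_{j : i(j) = i} d_j = n$ for each $i$), picking a basepoint and a system of arcs to the boundary components produces a relation
\[
    \prod_{j=1}^\ell u_j \gamma_{i(j)}^{d_j} u_j^{-1} = \prod_{k'=1}^g [a_{k'}, b_{k'}] \quad \text{in } G.
\]
At the chain level this reads $\cl_G(\sum_j d_j \gamma_{i(j)}) \leq g$, and the boundary-degree condition gives $\sum_j d_j \gamma_{i(j)} = n \sum_i \gamma_i$. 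Iterating the relation $\nu$ times and regrouping conjugates to collect equal powers of each $\gamma_i$ then yields $\cl_G(\sum_i \gamma_i^{n\nu}) \leq g\nu + O(1)$; dividing by $n\nu$ and letting $\nu \to \infty$ gives $\scl_G(c) \leq g/n \leq -\chi^-(\Sigma)/(2n)$.

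The main obstacle is the bookkeeping in the lower bound: the chain-level inequality $\cl_G(n \sum_i \gamma_i) \leq g$ does not immediately produce an $\scl$ bound, because having $n$ summands equal to $\gamma_i$ in a chain is formally distinct from having a single summand $\gamma_i^n$. Bridging the two requires iterating the relation, exploiting subadditivity of $\cl$ on chains under connected sums of surfaces, and verifying that the lower-order correction terms (including the tube penalty) vanish in the limit $\nu \to \infty$. This reconciliation is the technical core of the argument and is carried out in detail in \cite{cal-scl}*{Proposition 2.74}.
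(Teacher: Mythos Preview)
The paper does not prove this proposition; it simply records it with attribution to \cite{cal-scl}*{Proposition 2.74} and moves on. Since your sketch ultimately defers the technical core to that same reference, you and the paper agree on the substance.

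That said, the quantitative claims in your surface-to-commutators direction are inaccurate as written. The asserted bound $\cl_G\bigl(\sum_i \gamma_i^{\,n\nu}\bigr)\leq g\nu + O(1)$ is not what the iteration actually yields: rearranging $(Y_1\cdots Y_\ell)^\nu$ into a product of $\nu$-th powers and then merging equal bases costs a number of commutators growing linearly in~$\nu$, not $O(1)$ (already for two letters in a free group, $(AB)^\nu A^{-\nu}B^{-\nu}$ has commutator length of order~$\nu$). More seriously, the final inequality $g/n \leq -\chi^-(\Sigma)/(2n)$ is false whenever $\Sigma$ has a single boundary component, since then $-\chi^-(\Sigma)=2g-1<2g$; so even granting $\scl_G(c)\leq g/n$, you would not have established $\scl_G(c)\leq -\chi^-(\Sigma)/(2n(\Sigma))$ for every admissible $\Sigma$. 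The correct bookkeeping produces a linear-in-$\nu$ correction which, after dividing by $n\nu$, contributes exactly $(\ell-2)/(2n)$ and lands on $-\chi^-(\Sigma)/(2n)$ directly rather than passing through $g/n$. None of this undermines your proposal, since you hand off the details to \cite{cal-scl} anyway, but the intermediate estimates in the sketch are misleading and should be corrected.
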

    
    An admissible surface is called \emph{extremal} if it realises the infimum in Proposition \ref{prop:scl-topo}.
    Calegari \cite{cal-sclrat} proved that extremal surfaces exist for all $c\in B_1(G;\mathbb{Z})$ if $G$ if a free group. It follows that $\scl_G$ has rational values in this case.

    \subsection{The Gromov seminorm on homology}
    
    We recall here the definition of the Gromov seminorm, which was introduced in \cite{gromov-vol}.
    
    We'll work with rational coefficients throughout.
    
    Let $X$ be a topological space and let $C_*(X;\mathbb{Q})$ denote its singular chain complex over $\mathbb{Q}$.
    Each module $C_n(X;\mathbb{Q})$ can be equipped with the $\ell^1$-norm $\normm{\cdot}_1$ defined by $\normm{\sum_i\lambda_i\sigma_i}_1=\sum_i\left|\lambda_i\right|$ (with $\lambda_i\in\mathbb{Q}$ and $\sigma_i:\Delta^n\rightarrow X$ a singular $n$-simplex).
    The \emph{Gromov seminorm} (or \emph{$\ell^1$-seminorm}) on $H_n(X;\mathbb{Q})$ is defined to be the quotient seminorm:
    \[
        \gromnorm{\alpha}=\inf_{[a]=\alpha}\normm{a}_1,
    \]
    where the infimum is taken over all $n$-cycles $a\in Z_n(X;\mathbb{Q})$ representing the class $\alpha\in H_n(X;\mathbb{Q})$.
    
    The Gromov seminorm on $H_2(X;\mathbb{Q})$ has the following geometric interpretation (see \cite{m:bavard}*{Proposition 2.7} or \cite{cal-scl}*{\S{}1.2.5} for a proof):
    \begin{prop}\label{prop:grom-norm-geom}
        If $X$ is a topological space, then the Gromov seminorm of $\alpha\in H_2(X;\mathbb{Q})$ is given by
        \[
            \gromnorm{\alpha}=\inf_{f,\Sigma}\frac{-2\chi^-(\Sigma)}{n(\Sigma)},
        \]
        where the infimum is taken over all maps $f:\Sigma\rightarrow X$ from oriented closed surfaces $\Sigma$ such that $f_*[\Sigma]=n(\Sigma)\alpha$ for some $n(\Sigma)\in\mathbb{N}_{\geq1}$.
    \end{prop}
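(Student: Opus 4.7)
The plan is to prove both inequalities separately. For the upper bound
\[\gromnorm{\alpha}\leq\inf_{f,\Sigma}\frac{-2\chi^-(\Sigma)}{n(\Sigma)},\]
I would invoke functoriality of the Gromov seminorm under continuous maps together with the classical bound $\gromnorm{[\Sigma]}\leq -2\chi^-(\Sigma)$ for every closed oriented surface $\Sigma$. Given $f:\Sigma\to X$ with $f_*[\Sigma]=n(\Sigma)\alpha$, we have $n(\Sigma)\gromnorm{\alpha}=\gromnorm{f_*[\Sigma]}\leq\gromnorm{[\Sigma]}\leq-2\chi^-(\Sigma)$, and dividing by $n(\Sigma)$ yields the claim. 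The classical bound itself is a case check: the sphere and the torus have vanishing Gromov seminorm by amenability of their fundamental groups, while a higher-genus $\Sigma_g$ can be triangulated with $4g-4=-2\chi^-(\Sigma_g)$ singular 2-simplices whose signed sum represents $[\Sigma_g]$.

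For the reverse inequality the strategy is constructive. Fix $\varepsilon>0$ and pick a rational 2-cycle $a=\sum_i\lambda_i\sigma_i$ representing $\alpha$ with $\normm{a}_1<\gromnorm{\alpha}+\varepsilon$. Clear denominators by multiplying through by a common denominator $N\in\mathbb{N}_{\geq 1}$ to obtain an integral cycle $Na=\sum_i m_i\sigma_i$, then split each simplex with coefficient $m_i$ into $|m_i|$ copies, absorbing the sign by pre-composing with an orientation-reversing affine self-map of $\Delta^2$ when needed. We are left with an integral sum of $T=N\normm{a}_1$ singular 2-simplices, all with coefficient $+1$, whose boundary vanishes. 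The cycle condition says that the boundary 1-simplices cancel in oriented pairs; pick any such pairing and glue the $T$ triangles along matched edges. Since a $+\tau$ face on one triangle is matched with a $-\tau$ face on another, the orientations induced on the shared edge are opposite, so the gluing is orientation-preserving. This produces an oriented 2-dimensional pseudomanifold with a continuous map to $X$ whose chain image is exactly $Na$. Resolving any non-manifold vertices by splitting vertices whose link is a disjoint union of several circles yields a closed oriented surface $\Sigma$ and a map $f:\Sigma\to X$ with $f_*[\Sigma]=Na$ on the chain level, hence $f_*[\Sigma]=N\alpha$ in homology.

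To estimate the complexity, double-count: with $V$ vertices, $E$ edges and $T$ triangles, each edge is shared by exactly two triangles, so $E=3T/2$ and $\chi(\Sigma)=V-T/2$. Each connected component $K$ has at least one vertex and at least two triangles, so $-\chi^-(K)\leq T_K/2$; summing over components gives $-2\chi^-(\Sigma)\leq T=N\normm{a}_1$, whence
\[\frac{-2\chi^-(\Sigma)}{N}\leq\normm{a}_1<\gromnorm{\alpha}+\varepsilon.\]
Letting $\varepsilon\to 0$ concludes the argument. The main obstacle is the gluing step: one must verify that the sign conventions for the face maps $d_k$ in the singular chain complex are compatible with orientable gluing (so that the pseudomanifold produced is genuinely orientable and its fundamental class maps on the nose to the given cycle), and that the resolution of non-manifold vertices does not break this correspondence — both of which are bookkeeping exercises once the pairing has been fixed.
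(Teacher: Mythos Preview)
The paper does not prove this proposition itself; it simply defers to external references (Calegari's book \S{}1.2.5 and the author's companion paper). Your argument is the standard one found there, and the overall strategy---functoriality for one inequality, gluing a surface out of the simplices of a near-optimal cycle for the other---is correct.

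There is, however, one genuine slip in the lower-bound construction. You propose to absorb negative coefficients by precomposing $\sigma$ with an orientation-reversing affine self-map $r$ of $\Delta^2$, and then assert that the resulting sum of $+1$-weighted simplices still has vanishing boundary. It does not: for example, if $r$ swaps $v_0$ and $v_1$, then $\partial(\sigma+\sigma\circ r)=\sigma\circ d^2+(\sigma\circ d^2)\circ s$, where $s$ is the nontrivial affine involution of $\Delta^1$, and this is nonzero in $C_1(X)$. So after your precomposition the faces no longer cancel as \emph{identical} singular $1$-simplices, and the pairing-and-gluing step as you describe it cannot be carried out. The standard fix is to skip the precomposition: keep the signs $\varepsilon_j\in\{\pm1\}$, orient the $j$-th copy of $\Delta^2$ by $\varepsilon_j$ times the standard orientation, and pair a face contributing $+\tau$ to $\partial(Na)=0$ with one contributing $-\tau$. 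The product $\varepsilon_j(-1)^i$ records precisely the induced boundary orientation on the $i$-th face relative to the $d^i$-parametrisation, so matched faces carry opposite induced orientations and the gluing is orientation-compatible; moreover the fundamental class of the resulting surface pushes forward to $Na$ on the chain level. With this adjustment your Euler-characteristic count and the resolution of non-manifold vertices go through unchanged.
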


    \subsection{The relative Gromov seminorm}\label{subsec:rel-gromnorm}
    
    The Gromov seminorm is related to $\scl$ via the filling norm as explained in \cite{cal-scl}*{\S{}2.6}, but it is another connection that we would like to discuss here.
    We'll introduce an analogue of $\gromnorm{\cdot}$ on $H_2(X,c)$, where $X$ is a space and $c$ is a chain, and whose calculation is an intermediate step in the calculation of $\scl$.
    
    We first explain what we mean by the homology of a space relative to a chain.
    Consider a topological space $X$ with $\pi_1X=G$ and an integral chain $c\in C_1(G;\mathbb{Z})$; this yields a map $\gamma:\coprod_iS^1\rightarrow X$ as explained in \S{}\ref{subsec:scl-topo}.
    Now let $X_\gamma$ denote the mapping cylinder of $\gamma$:
    \[
        \textstyle X_\gamma=\left(X\amalg\left(\coprod_iS^1\times[0,1]\right)\right)/\sim,
    \]
    where $\sim$ is the equivalence relation generated by $\left(u,0\right)\sim\gamma(u)$ for $u\in\coprod_iS^1$.
    
    There is a natural embedding $\coprod_iS^1\hookrightarrow X_\gamma$ via $u\mapsto(u,1)$, and the homology of the pair $(X,c)$ is defined by
    \[
        \textstyle H_*(X,c;\mathbb{Q})=H_*\left(X_\gamma,\coprod_iS^1;\mathbb{Q}\right).
    \]
    Note that our choice of topological representative $\gamma$ for $c$ was unique up to homotopy (see \S{}\ref{subsec:scl-topo}), and homotopic choices of $\gamma$ will yield homotopic pairs $\left(X_\gamma,\coprod_iS^1\right)$; hence the homology $H_*(X,c)$ only depends on $c$.
    
    We will sometimes omit $\mathbb{Q}$ from the notation, but the relative homology $H_*(X,c)$ should always be understood to be with rational coefficients throughout this paper.
    
    \begin{prop}\label{prop:les-rel-h}
        Let $X$ be a topological space and let $c\in C_1(\pi_1X;\mathbb{Z})$ be an integral chain.
        \begin{enumerate}
            \item There is a long exact sequence\label{prop:les-rel-h-1}
            \[
                \textstyle\cdots\rightarrow H_n\left(\coprod_iS^1\right)\xrightarrow{\gamma_*}H_n\left(X\right)\rightarrow H_n\left(X,c\right)\xrightarrow{\partial}H_{n-1}\left(\coprod_iS^1\right)\rightarrow\cdots.
            \]
            
            \item If $c\in B_1\left(\pi_1X;\mathbb{Q}\right)$, then $\gamma_*\left[\coprod_iS^1\right]=0$.
            
            If in addition $c\in\pi_1 X$ (i.e. $\coprod_iS^1$ consists of a single circle), then $\gamma_*=0$ and there is a short exact sequence\label{prop:les-rel-h-2}
            \[
                \textstyle0\rightarrow H_2(X)\rightarrow H_2(X,c)\xrightarrow{\partial}H_1\left(S^1\right)\rightarrow0.
            \]
            
            \item If $Y\subseteq X$ and $c\in C_1\left(\pi_1Y;\mathbb{Z}\right)$, then there is a long exact sequence\label{prop:les-rel-h-3}
            \[
                \textstyle\cdots\rightarrow H_n\left(Y,c\right)\rightarrow H_n\left(X,c\right)\rightarrow H_n\left(X,Y\right)\xrightarrow{\partial}H_{n-1}\left(Y,c\right)\rightarrow\cdots.
            \]
        \end{enumerate}
        All the above exact sequences are with omitted rational coefficients.
    \end{prop}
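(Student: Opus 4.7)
The plan is to derive all three exact sequences from standard long exact sequences of pairs or triples, after identifying the spaces $X_\gamma$ and $Y_\gamma$ with their deformation retracts $X$ and $Y$.

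For (i), I would start from the fact that the mapping cylinder $X_\gamma$ deformation retracts onto $X$, so the inclusion $X\hookrightarrow X_\gamma$ induces isomorphisms $H_n(X)\xrightarrow{\cong}H_n(X_\gamma)$ in all degrees. The long exact sequence of the pair $\bigl(X_\gamma,\coprod_iS^1\bigr)$ then reads
\[
    \textstyle\cdots\rightarrow H_n\left(\coprod_iS^1\right)\xrightarrow{j_*}H_n\left(X_\gamma\right)\rightarrow H_n\left(X_\gamma,\coprod_iS^1\right)\xrightarrow{\partial}H_{n-1}\left(\coprod_iS^1\right)\rightarrow\cdots,
\]
where $j:\coprod_iS^1\hookrightarrow X_\gamma$ is the top inclusion. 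Since the composition of $j$ with the deformation retract $X_\gamma\to X$ is homotopic to $\gamma$, the map $j_*$ corresponds to $\gamma_*$ under the identification $H_n(X_\gamma)\cong H_n(X)$, proving (i).

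For (ii), I would unpack the connecting map at the level of chains: the generator $\bigl[\coprod_iS^1\bigr]\in H_1\bigl(\coprod_iS^1;\mathbb{Q}\bigr)$ is the sum of the fundamental classes of the circles, and each circle is sent by $\gamma$ to a loop representing $w_i^{n_i}$. Hence $\gamma_*\bigl[\coprod_iS^1\bigr]$ equals the image of $c=\sum_in_iw_i$ under $H_1(\pi_1X;\mathbb{Q})\rightarrow H_1(X;\mathbb{Q})$, which vanishes exactly when $c\in B_1(\pi_1X;\mathbb{Q})$. When $\coprod_iS^1$ is a single circle, $H_1(S^1;\mathbb{Q})$ is one-dimensional and generated by $\bigl[S^1\bigr]$, so $\gamma_*=0$ in degree $1$; combined with $H_2(S^1;\mathbb{Q})=0$, the relevant portion of (i) truncates to the claimed short exact sequence.

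For (iii), the key observation is that since $c\in C_1(\pi_1Y;\mathbb{Z})$, the representative loop $\gamma$ factors through $Y$, so $Y_\gamma$ is naturally a subspace of $X_\gamma$ containing $\coprod_iS^1$. I would apply the long exact sequence of the triple $\bigl(X_\gamma,Y_\gamma,\coprod_iS^1\bigr)$, which by construction identifies $H_n(Y_\gamma,\coprod_iS^1)=H_n(Y,c)$ and $H_n(X_\gamma,\coprod_iS^1)=H_n(X,c)$. It remains to check that $H_n(X_\gamma,Y_\gamma)\cong H_n(X,Y)$; this follows because the deformation retraction of $\coprod_iS^1\times[0,1]$ onto $\gamma\bigl(\coprod_iS^1\bigr)\subseteq Y$ extends to a simultaneous deformation retraction of the pair $(X_\gamma,Y_\gamma)$ onto $(X,Y)$. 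No step is genuinely hard here; the mildest technical point is keeping track of the compatibility of deformation retracts in (iii), which is what lets the triple sequence produce the stated six-term pattern.
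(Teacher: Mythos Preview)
Your proposal is correct and follows exactly the approach the paper takes: the paper's own proof is a single sentence invoking the long exact sequences of pairs and triples together with the deformation retraction $X_\gamma\to X$, and you have simply unpacked those citations into the expected details. In particular, your treatment of (iii) via the triple $(X_\gamma,Y_\gamma,\coprod_iS^1)$ and the identification $H_n(X_\gamma,Y_\gamma)\cong H_n(X,Y)$ is precisely what the paper has in mind.
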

    \begin{proof}
        This follows from the long exact sequences of pairs and triples in homology \cite{hatcher}*{p.118}, together with the fact that $X_\gamma$ deformation retracts to $X$ \cite{hatcher}*{p.2}.
    \end{proof}
    
    Observe that, given an admissible surface $f:\left(\Sigma,\partial\Sigma\right)\rightarrow(X,c)$, the commutative square $\left(\ref{eq:comm-diag-adm-surf}\right)$ gives an induced map
    \[
        f_*:H_*(\Sigma,\partial\Sigma)\rightarrow H_*(X,c).
    \]
    In particular, we have a class $f_*[\Sigma]\in H_2(X,c)$, where $[\Sigma]\in H_2\left(\Sigma,\partial\Sigma\right)$ is the (rational) fundamental class of $\Sigma$.
    Proposition \ref{prop:scl-topo} expresses $\scl$ as an infimum over all admissible surfaces with a condition on $\partial f_*\left[\partial\Sigma\right]=\partial\left(f_*[\Sigma]\right)$; we can instead focus on admissible surfaces for which we impose a condition on $f_*[\Sigma]$.
    
    \begin{defi}\label{defi:rel-grom}
        Let $X$ be a topological space and $c\in C_1(\pi_1X;\mathbb{Z})$. The \emph{relative Gromov seminorm} is defined on $H_2(X,c;\mathbb{Q})$ by
        \[
            \gromnorm{\alpha}=\inf_{f,\Sigma}\frac{-2\chi^-(\Sigma)}{n(\Sigma)},
        \]
        where the infimum is taken over all admissible surfaces $f:\left(\Sigma,\partial\Sigma\right)\rightarrow\left(X,c\right)$ such that $f_*[\Sigma]=n(\Sigma)\alpha$ for some $n(\Sigma)\in\mathbb{N}_{\geq1}$.
        
        Such an admissible surface will be called an \emph{admissible surface for $\gromnorm{\alpha}$}.
    \end{defi}
    
    The relative Gromov seminorm as we define it is indeed a seminorm: homogeneity is obtained by replacing an admissible surface with a finite cover, and subadditivity by taking disjoint unions of admissible surfaces (after ensuring that they have the same degree by possibly taking finite covers).
    
    \begin{rmk}
        If $c=0$, then $H_2(X,c)=H_2(X)$, and Proposition \ref{prop:grom-norm-geom} says that $\gromnorm{\cdot}$ coincides with the usual definition of the Gromov seminorm.
        In the general case, the relative Gromov seminorm can also be defined as an $\ell^1$-seminorm --- see \cite{m:bavard}*{\S{}2} for more details.
    \end{rmk}
    
    The connection between $\scl$ and the relative Gromov seminorm is as follows:
    
    \begin{prop}\label{prop:scl-inf-gt}
        Given an integral chain $c\in C_1\left(\pi_1X;\mathbb{Z}\right)$, we have
        \[
            \scl(c)=\frac{1}{4}\inf\left\{\textstyle\gromnorm{\alpha}\mid\alpha\in H_2(X,c),\:\partial\alpha=\left[\coprod_iS^1\right]\right\},
        \]
        where $\partial:H_2(X,c)\rightarrow H_1\left(\coprod_iS^1\right)$ is the boundary map in the long exact sequence of Proposition $\ref{prop:les-rel-h}.\ref{prop:les-rel-h-1}$.
    \end{prop}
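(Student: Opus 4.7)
The plan is to observe that $4\,\scl(c)$ and the right-hand side of the statement are both infima of the quantity $-2\chi^-(\Sigma)/n(\Sigma)$ over essentially the same family of admissible surfaces, merely grouped differently. Unfolding Proposition \ref{prop:scl-topo} gives
\[
    4\,\scl(c) = \inf_{f,\Sigma}\frac{-2\chi^-(\Sigma)}{n(\Sigma)},
\]
where the infimum ranges over admissible surfaces $f:(\Sigma,\partial\Sigma)\to(X,c)$ satisfying $(\partial f)_*[\partial\Sigma]=n(\Sigma)\left[\coprod_iS^1\right]$, while unfolding Definition \ref{defi:rel-grom} gives
\[
    \inf_{\alpha}\gromnorm{\alpha} = \inf_{\alpha}\inf_{f,\Sigma}\frac{-2\chi^-(\Sigma)}{n(\Sigma)},
\]
over all $\alpha\in H_2(X,c;\mathbb{Q})$ with $\partial\alpha=\left[\coprod_iS^1\right]$ and all admissible surfaces $f$ with $f_*[\Sigma]=n(\Sigma)\alpha$.

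The key step is to translate between the two constraints on $f$ using naturality of the connecting homomorphism. Viewing $f$ as a map of pairs $(\Sigma,\partial\Sigma)\to\left(X_\gamma,\coprod_iS^1\right)$ via the commutative square $(\ref{eq:comm-diag-adm-surf})$ and the natural inclusion $X\hookrightarrow X_\gamma$, naturality of the long exact sequence of a pair yields the identity
\[
    \partial\left(f_*[\Sigma]\right) = (\partial f)_*\left(\partial[\Sigma]\right) = (\partial f)_*[\partial\Sigma]
\]
in $H_1\left(\coprod_iS^1;\mathbb{Q}\right)$, where the outer $\partial$ on the left is the boundary map from Proposition \ref{prop:les-rel-h}.\ref{prop:les-rel-h-1}.

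With this identity in hand, the two conditions on $f$ become interchangeable: an admissible surface satisfying $(\partial f)_*[\partial\Sigma]=n\left[\coprod_iS^1\right]$ automatically defines a class $\alpha := \frac{1}{n}f_*[\Sigma]\in H_2(X,c;\mathbb{Q})$ with $\partial\alpha=\left[\coprod_iS^1\right]$ and $f_*[\Sigma]=n\alpha$, and conversely any admissible surface realising $f_*[\Sigma]=n\alpha$ for such an $\alpha$ satisfies $(\partial f)_*[\partial\Sigma]=n\left[\coprod_iS^1\right]$. Hence the two infima run over the exact same collection of pairs $(f,\Sigma)$, proving the equality. I do not expect a genuine obstacle: the argument is essentially a bookkeeping exercise, the only substantive input being the naturality identity above, which is standard. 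The mild subtlety that $\alpha=f_*[\Sigma]/n$ need not be integral is harmless since $H_2(X,c)$ is defined with rational coefficients throughout.
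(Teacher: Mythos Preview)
Your proof is correct and is exactly what the paper intends: its entire proof reads ``This is a restatement of Proposition \ref{prop:scl-topo}.'' You have simply spelled out that restatement in detail, using naturality of the connecting homomorphism to match the two constraints, which is precisely the bookkeeping the paper leaves implicit.
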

    \begin{proof}
        This is a restatement of Proposition \ref{prop:scl-topo}.
    \end{proof}
    
    Proposition \ref{prop:scl-inf-gt} suggests that computations of $\scl$ could be tackled in two successive steps: first fix a relative class $\alpha\in H_2(X,c)$ with $\partial\alpha=\left[\coprod_iS^1\right]$ and estimate $\gromnorm{\alpha}$, and then find the infimum over all such classes $\alpha$.
    Note that, if $H_2(X)=0$ (which happens for instance if $G$ is free and $X$ is a $K(G,1)$), then the long exact sequence of Proposition \ref{prop:les-rel-h} tells us that $\partial:H_2(X,c)\rightarrow H_1\left(\coprod_iS^1\right)$ is injective.
    If in addition $c$ is a boundary, then there is a unique $\alpha\in H_2(X,c)$ such that $\partial\alpha=\left[\coprod_iS^1\right]$, and in this case, $\scl(c)=\frac{1}{4}\gromnorm{\alpha}$ by Proposition \ref{prop:scl-inf-gt}.
    
    However, our point is that in some cases, computations of $\scl$ can be made difficult by the presence of nonzero classes in $H_2(X)$, and in those cases, one may hope to obtain information on the relative Gromov seminorm as a stepping stone towards $\scl$.

\section{Links and orientability in 2-complexes}\label{sec:2comp}

    We start with an analysis of some topological properties of $2$-complexes.
    In particular, we will need a notion of orientability for $2$-complexes that are not surfaces.
    The right setting to make this work will be $2$-complexes with small links.
    The goal of this section is to introduce those notions.
    
    \subsection{2-complexes}\label{subsec:2comp}
    
    We first specify the category of topological spaces we will be working with throughout this paper.
    
    Following the terminology of \cite{fritsch-piccinini}*{Chapter 2}, we say that a continuous map $f:X\rightarrow Y$ between CW-complexes is \emph{cellular} if, for each $n\in\mathbb{N}_{\geq0}$, $f\left(X^{(n)}\right)\subseteq Y^{(n)}$, where $X^{(n)}$ and $Y^{(n)}$ denote the $n$-skeleta of $X$ and $Y$ respectively.
    We say that $f$ is \emph{combinatorial} if it maps each cell of $X$ homeomorphically onto a cell of $Y$.
    
    A \emph{$2$-complex} is a $2$-dimensional CW-complex $X$ such that the attaching map $S^1_\sigma\rightarrow X^{(1)}$ of each $2$-cell $\sigma$ of $X$ is combinatorial for a suitable subdivision of the circle $S^1_\sigma$.
    An edge in this subdivision of $S^1_\sigma$ is called a \emph{side} of $\sigma$, and the \emph{degree} of $\sigma$ is its number of sides.
    This follows Gersten's terminology \cite{gersten}.
    
    We will assume that all $2$-complexes are locally finite, but we allow noncompact $2$-complexes.
    
    Each vertex $v$ in a $2$-complex $X$ has a \emph{link} --- denoted by $\Lk_X(v)$ --- which can be defined as the boundary of a regular neighbourhood of $v$ in $X$ (see \cite{gersten}).
    The link has the structure of a graph, with vertices of $\Lk_X(v)$ corresponding to oriented half-edges ${e}$ of $X$ starting at $v$, with an edge between ${e}_1$ and ${e}_2$ in $\Lk_X(v)$ corresponding to each $2$-cell $\sigma$ of $X$ whose boundary traverses ${e}_1^{-1}$ and ${e}_2$ successively.
    
    A \emph{cellulated surface} is a $2$-complex that is also a topological surface, possibly with boundary.

    \subsection{A surface criterion for 2-complexes}\label{subsec:lks-srf}
    
    To decide whether or not a given $2$-complex is a surface, it suffices to examine the topology of links of vertices; this is the content of the following lemma:
    \begin{lmm}[Surface criterion]\label{lmm:srf-crit}
        A $2$-complex $X$ is a cellulated surface if and only if the link of every vertex in $X$ is a circle or a nondegenerate arc. In this case, a vertex $v$ of $X$ lies on the boundary if and only if its link is homeomorphic to an arc.
    \end{lmm}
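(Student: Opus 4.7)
The approach is to analyse the local topology of $X$ at every point, splitting into three cases according to the cell containing the point: interior of a $2$-cell, interior of a $1$-cell, and vertex. Interior points of $2$-cells trivially have disc neighbourhoods, so nothing need be checked there. At a vertex $v$, a small regular neighbourhood of $v$ in $X$ is by definition the open cone on $\Lk_X(v)$, so such a neighbourhood is a $2$-disc iff $\Lk_X(v)$ is a circle, and a half-disc iff $\Lk_X(v)$ is a nondegenerate arc; no other type of link yields a surface point. This handles vertices in both directions at once and also gives the stated characterisation of boundary vertices.

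The only nontrivial step, and the main obstacle, is to control the local structure at interior points of $1$-cells from hypotheses that only mention vertex links. The bridge between the two types of data is the following observation: given a $1$-cell $e$ with endpoint $v$, the half-edge of $e$ at $v$ defines a vertex $\tilde{e}$ of $\Lk_X(v)$, and the valence of $\tilde{e}$ equals the number $k_e$ of traversals of $e$ by the attaching maps of $2$-cells of $X$, counted with multiplicity. Indeed, each traversal of $e$ by a $2$-cell $\sigma$ contributes exactly one corner of $\sigma$ at $v$ one of whose two incident half-edges is $\tilde{e}$, and the edges of $\Lk_X(v)$ incident to $\tilde{e}$ are in bijection with such corners.

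Once this is in place, the link of any interior point $x$ of $e$ is readily identified with the graph having two vertices (the two germs of $e$ at $x$) joined by $k_e$ parallel edges, which is a circle iff $k_e = 2$, a nondegenerate arc iff $k_e = 1$, and neither a circle nor a nondegenerate arc otherwise. So if $\Lk_X(v)$ is a circle or nondegenerate arc at every vertex $v$, every vertex of every link has valence at most $2$, and at least $1$ by connectedness of $\Lk_X(v)$; hence $k_e \in \{1,2\}$ for every $1$-cell $e$, and every interior point of a $1$-cell is a surface point. Together with the vertex analysis, this yields both directions of the lemma, and the boundary identification drops out of the same discussion.
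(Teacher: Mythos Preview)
Your proof is correct and follows essentially the same approach as the paper's: both argue by analysing cones on links at vertices, note that interior points of $2$-cells are trivially manifold points, and handle interior points of edges by observing that the valence of the vertex $\tilde{e}\in\Lk_X(v)$ controls the number of $2$-cells (with multiplicity) incident to $e$. Your write-up is somewhat more explicit about the edge-interior step (computing the link there as two vertices joined by $k_e$ parallel edges), but the argument is the same.
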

    \begin{proof}
        The direct implication $\left(\Rightarrow\right)$ is clear from the definition of the link $\Lk_X(v)$ as the boundary of a regular neighbourhood of $v$.
        For $\left(\Leftarrow\right)$, the key point is that every vertex $v$ has a neighbourhood homeomorphic to a cone over $\Lk_X(v)$.
        If $\Lk_X(v)$ is a circle, then a cone over $\Lk_X(v)$ is homeomorphic to a ($2$-dimensional) disc; if $\Lk_X(v)$ is a nondegenerate arc, then a cone over $\Lk_X(v)$ is homeomorphic to a half-disc.
        It is also clear that points in the interior of $2$-cells have neighbourhoods homeomorphic to $\mathbb{R}^2$.
        It remains to consider points in the interior of edges.
        Given an edge $e$, let $v$ be one of its endpoints; then $\Lk_X(v)$ has a vertex $\hat{e}$ corresponding to $e$, and by assumption, $\hat{e}$ has one or two neighbours in $\Lk_X(v)$.
        Since our $2$-complexes are assumed to be combinatorial, this means that $e$ is incident to one or two $2$-cells of $X$; in both cases, points in the interior of $e$ have neighbourhoods homeomorphic to $\mathbb{R}^2$ or $\mathbb{R}\times\mathbb{R}_{\geq0}$.
    \end{proof}
    
    This motivates the following:
    \begin{defi}\label{defi:small-lks}
        A $2$-complex $X$ has \emph{small links} if one of the following equivalent conditions holds:
        \begin{enumerate}
            \item The link of every vertex of $X$ is homeomorphic to a circle or a union of (possibly degenerate) arcs.
            \item Every edge $e$ of $X$ is incident to at most two $2$-cells, counted with multiplicity (i.e. a $2$-cell is counted as many times as it has sides that are glued to $e$).
        \end{enumerate}
    \end{defi}
    
    In other words, Lemma \ref{lmm:srf-crit} says that a $2$-complex $X$ is a surface if and only if it has nondegenerate small connected links.

    \subsection{Orientability of 2-complexes}\label{subsec:or}
    
    We will need a notion of orientation for $2$-complexes.
    To define it, we will work with \emph{locally finite homology}, denoted by $H_*^\textnormal{lf}$.
    For a CW-complex $X$, this is defined as the homology of the chain complex $C_*^\textnormal{lf,cell}(X)$, where $C_n^\textnormal{lf,cell}(X)$ consists of infinite formal sums of oriented $n$-cells of $X$ with locally finite support.
    Note that, if $X$ is compact, then $H_*^\textnormal{lf}(X)=H_*(X)$.
    See \cite{geoghegan}*{Chapter 11} or \cite{loeh-phd}*{\S{}5.1.1} for more details on locally finite homology.
    
    \begin{defi}\label{defi:boundary}
        Given a $2$-complex $X$, we define the \emph{boundary} $\partial X$ of $X$ to be the $1$-dimensional subcomplex consisting of all the edges of $X$ (and their endpoints) that are incident to a single $2$-cell, and along only one side of this $2$-cell.
        In other words, these are the edges $e$ for which each point in the interior of $e$ has a neighbourhood in $X$ that is homeomorphic to a half-disc.
    \end{defi}
    
    Note that, in a $2$-complex, $C_3^\textnormal{lf,cell}(X)=0$, so $H_2^\textnormal{lf}(X)=Z_2^\textnormal{lf,cell}(X)$.
    In particular, it makes sense to speak of the \emph{support} of a $2$-class: this is just the support of the corresponding $2$-cycle.
    
    \begin{defi}
        Let $X$ be a $2$-complex and let $A$ be an abelian group. We say that $X$ is \emph{$A$-orientable} if there is a class $\beta\in H_2^\textnormal{lf}(X,\partial X;A)$ whose support contains every $2$-cell of $X$.
    \end{defi}
    
    Note that, if $X=S$ is a cellulated surface, then our definition of boundary coincides with the usual one, and $\mathbb{Z}$-orientability of $S$ is equivalent to orientability of $S$ in the usual sense (see for example \cite{vick}*{Corollary 6.7} in the closed case).
    Our definition of orientation applies to any $2$-complex, but in the context of surfaces, it is less intrinsic and flexible than the usual one because it requires one to fix a cellular structure first.
    
    We will use orientability via the following lemma:
    \begin{lmm}\label{lmm:or-h2-zero}
        Let $X$ be an $A$-orientable $2$-complex.
        Consider a subcomplex $Y$ of $X$ such that $\partial X\subseteq Y\subseteq X$ and $H_2^\textnormal{lf}(X,Y;A)=0$.
        Then $Y$ contains every $2$-cell of $X$.
    \end{lmm}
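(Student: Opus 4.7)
The plan is to exploit the fact that $X$ has no $3$-cells, which forces the relative locally finite cellular chain complex of $(X,Y)$ to satisfy
\[
H_2^\textnormal{lf}(X,Y;A) = Z_2^\textnormal{lf,cell}(X,Y;A).
\]
The vanishing hypothesis $H_2^\textnormal{lf}(X,Y;A)=0$ then translates into the following rigid statement: every $2$-chain $c \in C_2^\textnormal{lf,cell}(X;A)$ whose boundary lies in $C_1^\textnormal{lf,cell}(Y;A)$ is already supported on $2$-cells of $Y$. Indeed, such a $c$ projects to a relative $2$-cycle in $C_2^\textnormal{lf,cell}(X;A)/C_2^\textnormal{lf,cell}(Y;A)$, and by hypothesis this projection must be zero.

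Next, I would apply this principle to a representative $b \in C_2^\textnormal{lf,cell}(X;A)$ of the orientation class $\beta\in H_2^\textnormal{lf}(X,\partial X;A)$. By definition of the relative group, $\partial b \in C_1^\textnormal{lf,cell}(\partial X;A) \subseteq C_1^\textnormal{lf,cell}(Y;A)$, where the inclusion uses $\partial X \subseteq Y$. Since $\partial X$ is $1$-dimensional, $C_2^\textnormal{lf,cell}(\partial X;A) = 0$, so the support of $b$ as a chain coincides with the support of $\beta$ as a class; by $A$-orientability, this support contains every $2$-cell of $X$. Applying the principle above then yields $b \in C_2^\textnormal{lf,cell}(Y;A)$, which means every $2$-cell of $X$ occurs as a $2$-cell of $Y$.

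The argument is essentially formal once the dimensional observation has been made, so I do not foresee any real obstacle. The only point requiring minor care is the identification between the supports of a class in $H_2^\textnormal{lf}(X,\partial X;A)$ and any representative cycle; but this is automatic because $\partial X$ contains no $2$-cells, so the quotient $C_2^\textnormal{lf,cell}(X,\partial X;A) \to C_2^\textnormal{lf,cell}(X;A)$ is in fact the identity map.
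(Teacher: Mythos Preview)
Your proof is correct and follows essentially the same line as the paper's: both exploit that $X$ has no $3$-cells to identify $H_2^\textnormal{lf}$ with relative $2$-cycles, and then use the vanishing hypothesis to force the orientation chain into $Y$. The paper packages this via the long exact sequence of the triple $(X,Y,\partial X)$, obtaining surjectivity of $H_2^\textnormal{lf}(Y,\partial X;A)\to H_2^\textnormal{lf}(X,\partial X;A)$ and pulling $\beta$ back to a class supported in $Y$, whereas you argue directly at the chain level; the underlying content is identical.
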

    \begin{proof}
        The long exact sequence of the triple $\left(X,Y,\partial X\right)$ shows that the inclusion induces a surjective morphism
        \[
            H_2^\textnormal{lf}(Y,\partial X;A)\twoheadrightarrow H_2^\textnormal{lf}(X,\partial X;A).
        \]
        Since $X$ is $A$-orientable relative to $\partial X$, there is a class $\beta\in H_2^\textnormal{lf}(X,\partial X;A)$ with support containing every $2$-cell of $X$.
        Let $\beta_0\in H_2^\textnormal{lf}(Y,\partial X;A)$ be a preimage of $\beta$.
        Then the support of $\beta_0$ is contained in $Y$ and must contain every $2$-cell of $X$.
    \end{proof}
    
    \begin{cor}\label{cor:rel-hom-srf}
        Let $S$ be an orientable closed surface and let $T\subseteq S$ be a subsurface such that $H_2(S,T)=0$.
        Then $S=T$.\qed
    \end{cor}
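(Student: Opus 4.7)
The plan is to deduce Corollary \ref{cor:rel-hom-srf} as a direct consequence of Lemma \ref{lmm:or-h2-zero}, applied with $X=S$, $Y=T$, and $A=\mathbb{Q}$. The only work is to check that the three hypotheses of the lemma hold.

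First I would choose a CW-structure on $S$ with respect to which $T$ is a subcomplex; this is possible because $T$ is a compact subsurface of $S$, so one can triangulate $S$ in a way compatible with $\partial T$ and then pass to the resulting $2$-complex structure. With this setup, $T\subseteq S$ is a genuine subcomplex in the sense of Section \ref{subsec:2comp}. Next, because $S$ is closed, no edge of $S$ is incident to only one $2$-cell along a single side, so in the sense of Definition \ref{defi:boundary} we have $\partial S=\varnothing$, and the condition $\partial S\subseteq T\subseteq S$ of Lemma \ref{lmm:or-h2-zero} is automatic.

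For $\mathbb{Q}$-orientability, I would invoke the standard fact that an orientable closed surface admits a rational fundamental class $[S]\in H_2(S;\mathbb{Q})=H_2^{\textnormal{lf}}(S,\partial S;\mathbb{Q})$, and this class is represented by the $2$-cycle that sums all $2$-cells of $S$ with appropriate signs; in particular its support is all of $S$. Finally, since $S$ is compact, locally finite homology reduces to ordinary singular homology, so the hypothesis $H_2(S,T)=0$ gives $H_2^{\textnormal{lf}}(S,T;\mathbb{Q})=0$.

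Lemma \ref{lmm:or-h2-zero} then concludes that every $2$-cell of $S$ is contained in $T$. Since $T$ is a subsurface, it is closed in $S$, hence contains the closures of all open $2$-cells; together these closures exhaust $S$, so $T=S$. The only mildly nontrivial step is the initial choice of compatible cell structure, but once this is in place the rest is bookkeeping.
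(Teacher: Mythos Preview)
Your proposal is correct and is exactly the argument the paper intends: the corollary is stated with an immediate \qed right after Lemma~\ref{lmm:or-h2-zero}, and your verification of the hypotheses (empty $\partial S$, existence of a fundamental class, and compactness reducing locally finite to ordinary homology) is precisely the bookkeeping the reader is meant to supply.
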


    \subsection{Subcomplex stability}
    
    For our proof, we will need to pass to a subcomplex, and it will be necessary to check that the relevant properties of the original $2$-complex are inherited by the subcomplex.
    We start with the following easy observation:
    
    \begin{lmm}[Subcomplex stability of small links]\label{lmm:subcx-stab-lks}
        Any subcomplex $X_0$ of a $2$-complex $X$ with small links also has small links.
    \end{lmm}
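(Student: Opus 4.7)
The plan is to verify condition (ii) of Definition \ref{defi:small-lks} for $X_0$, as it is the most convenient of the two equivalent characterisations to check directly. Condition (ii) bounds, at each edge, the number of incident $2$-cells counted with multiplicity by sides, and this quantity behaves monotonically under passage to subcomplexes.

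First I would pick an arbitrary edge $e$ of $X_0$. Since $X_0$ is a subcomplex of $X$, the edge $e$ is also an edge of $X$, and every $2$-cell $\sigma$ of $X_0$ belongs to $X$ with the same combinatorial attaching map $S^1_\sigma \to X^{(1)}$. In particular, $\sigma$ contributes exactly the same number of sides glued to $e$ in $X_0$ as in $X$. The set of $2$-cells of $X_0$ incident to $e$ is a subset of the set of $2$-cells of $X$ incident to $e$, so the total incidence count (with multiplicity) of $2$-cells of $X_0$ at $e$ is bounded above by the corresponding count in $X$, which is at most two by hypothesis. This verifies condition (ii) for $X_0$.

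For the reader who prefers to think in terms of links, one may alternatively note that for every vertex $v$ of $X_0$ the link $\Lk_{X_0}(v)$ embeds as a subgraph of $\Lk_X(v)$. A subgraph of a disjoint union of arcs and circles is itself a disjoint union of arcs (possibly degenerate, i.e.\ isolated vertices) and circles, so smallness is preserved.

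There is no real obstacle; the statement is essentially a tautology once condition (ii) is adopted as the working definition of small links, and is recorded here only to justify later reductions to subcomplexes.
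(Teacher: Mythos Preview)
Your proof is correct. The paper's own proof uses your alternative link-based argument (condition (i)) as its primary and only argument, simply observing that $\Lk_{X_0}(v)\hookrightarrow\Lk_X(v)$ and that a subgraph of a circle or union of arcs is again of this form; your main argument via condition (ii) is an equally valid route and, as you note, the lemma is essentially a tautology either way.
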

    \begin{proof}
        For each vertex $v\in X_0$, there is an embedding $\Lk_{X_0}(v)\hookrightarrow\Lk_X(v)$, and any subgraph of a circle or a union of arcs is again a circle or a union of arcs.
    \end{proof}
    
    We also need to check that orientability, as well as vanishing of relative homology, descend to subcomplexes:
    
    \begin{lmm}[Subcomplex stability of orientability]\label{lmm:subcx-stab-or}
        Let $A$ be an abelian group and let $X$ be an $A$-orientable $2$-complex with small links.
        Then any subcomplex $X_0\subseteq X$ is $A$-orientable.
    \end{lmm}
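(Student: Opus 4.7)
The plan is to construct an $A$-orientation of $X_0$ by restricting an $A$-orientation of $X$ to the $2$-cells lying in $X_0$, and then to verify that this restriction is automatically a relative cycle in $\bigl(X_0,\partial X_0\bigr)$ thanks to the small links hypothesis.

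Concretely, I would fix a cellular representative of an $A$-orientation class $\beta \in H_2^{\textnormal{lf}}(X,\partial X;A)$, given by a locally finite sum $\beta = \sum_{\sigma} \lambda_\sigma \cdot \sigma$ over all oriented $2$-cells $\sigma$ of $X$, with every coefficient $\lambda_\sigma \in A$ nonzero by the support hypothesis. I would then set
\[
    \beta_0 = \sum_{\sigma \subseteq X_0} \lambda_\sigma \cdot \sigma \in C_2^{\textnormal{lf,cell}}(X_0;A),
\]
which is again locally finite and whose support is visibly every $2$-cell of $X_0$. The only thing left to check is that $\partial \beta_0$, computed inside $X_0$, is supported on $\partial X_0$.

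For this I would run a case check at each edge $e$ of $X_0$ with $e \notin \partial X_0$. Definition~\ref{defi:boundary} applied to $X_0$ says that the number of sides of $2$-cells of $X_0$ glued to $e$ is not equal to $1$, and the small links assumption on $X$ forces this number to be at most $2$. If it is $0$, then $e$ does not appear in $\partial \beta_0$ at all; if it is $2$, then these sides already account for every side of a $2$-cell of $X$ incident to $e$, so $e \notin \partial X$, whence the coefficient of $e$ in $\partial \beta$ is zero, and so is the coefficient of $e$ in $\partial \beta_0$.

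The main conceptual point --- and the reason the small links hypothesis is invoked --- is that without it, discarding the $2$-cells of $X \smallsetminus X_0$ could create spurious boundary along edges of $X_0$ previously cancelled by contributions from $2$-cells that get thrown away. With small links, this cannot happen: as soon as an edge of $X_0$ is incident to two sides of $2$-cells of $X_0$, it has already inherited all of its $2$-cell neighbours from $X$. Local finiteness, the support condition, and the behaviour along $\partial X_0$ all follow transparently from the construction.
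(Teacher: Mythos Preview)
Your proof is correct and follows essentially the same approach as the paper: restrict the orientation chain to the $2$-cells of $X_0$, then use the small links hypothesis to check that the result is a relative cycle modulo $\partial X_0$. The paper phrases the edge check as ``the support of $dp_0$ lies in $\partial X$ or meets a $2$-cell of $X\smallsetminus X_0$, and either way such an edge is in $\partial X_0$'', while you phrase it as the contrapositive case split on edges $e\notin\partial X_0$; the content is identical.
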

    \begin{proof}
        Orientability of $X$ means that there exists a relative cellular $2$-cycle $p=\sum_\sigma\lambda_\sigma\sigma\in Z_2^\textnormal{lf,cell}\left(X,\partial X;A\right)$ (with $\lambda_\sigma\in A$ for each $2$-cell $\sigma$ of $X$) whose support contains all $2$-cells of $X$.
        Set
        \[
            p_0=\sum_{\sigma\subseteq X_0}\lambda_\sigma\sigma.
        \]
        Since $dp\in C_1^\textnormal{cell}\left(\partial X;A\right)$, the support of $dp_0$ consists of $1$-cells of $X$ that lie in $\partial X$ or are incident to at least one $2$-cell in $X\smallsetminus X_0$.
        In both cases, they are incident to at most one $2$-cell of $X_0$; moreover, they are incident to at least one $2$-cell of $X_0$ as they lie in the support of $dp_0$.
        Therefore, the support of $dp_0$ is contained in $\partial X_0$, showing that $p_0$ is a relative $2$-cycle in $Z_2^\textnormal{lf,cell}\left(X_0,\partial X_0;A\right)$ whose support contains all the $2$-cells of $X_0$.
        Hence, $X_0$ is $A$-orientable.
    \end{proof}
    
    \begin{lmm}[Injectivity of relative homology]\label{lmm:subcx-stab-van-fund-class}
        Let $X$ be a $2$-complex, and let $Y,X_0\subseteq X$ be two subcomplexes.
        Set $Y_0=Y\cap X_0$.
        Then for any abelian group $A$, the inclusion-induced map
        \[
            H_2\left(X_0,Y_0;A\right)\rightarrow H_2\left(X,Y;A\right)
        \]
        is injective.
    \end{lmm}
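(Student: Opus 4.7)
The plan is to exploit the 2-dimensionality of $X$ to reduce the injectivity question to an elementary statement about cellular $2$-chains. Since $X$ is a 2-complex, it contains no $3$-cells, and the same holds for its subcomplexes. Consequently, via the identification of cellular and singular homology on CW-pairs, $H_2(X,Y;A)$ is simply the group of relative cellular $2$-cycles, namely
\[
\{c\in C_2(X;A)\mid dc\in C_1(Y;A)\}\big/\,C_2(Y;A),
\]
where chain groups are taken cellularly; the analogous description holds for $H_2(X_0,Y_0;A)$, and the inclusion-induced map is just the map that regards $c\in C_2(X_0;A)$ as an element of $C_2(X;A)$.

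Suppose now that $c_0\in C_2(X_0;A)$, with $dc_0\in C_1(Y_0;A)$, represents a class mapping to zero in $H_2(X,Y;A)$. Then, viewed inside $C_2(X;A)$, the chain $c_0$ must lie in $C_2(Y;A)$. The key combinatorial observation is that for subcomplexes $X_0,Y\subseteq X$,
\[
C_2(X_0;A)\cap C_2(Y;A)=C_2(X_0\cap Y;A)=C_2(Y_0;A)
\]
inside $C_2(X;A)$. This holds because the cellular $2$-chain group on a subcomplex is free over $A$ on the $2$-cells of that subcomplex, and a $2$-cell of $X$ lies in both $X_0$ and $Y$ if and only if it lies in $Y_0$. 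Hence $c_0\in C_2(Y_0;A)$, so $[c_0]=0$ in $H_2(X_0,Y_0;A)$, establishing injectivity.

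I do not expect any real obstacle here: the argument rests on two elementary facts, namely the vanishing of cellular $3$-chains in a $2$-complex and the compatibility of cellular $2$-chain groups with intersections of subcomplexes. The only subtlety is to work with the cellular chain complex throughout, so that the $2$-chain groups are genuinely free over $A$ on $2$-cells; this is harmless since cellular and singular homology agree on CW-pairs.
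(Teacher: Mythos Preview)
Your proof is correct. Both arguments hinge on the same essential point---that $X$ has no $3$-cells---but exploit it differently. The paper factors the map as
\[
H_2(X_0,Y_0;A)\xrightarrow{\cong}H_2(X_0\cup Y,Y;A)\hookrightarrow H_2(X,Y;A),
\]
using excision for the first arrow and the long exact sequence of the triple $(X,X_0\cup Y,Y)$ together with $H_3(X,X_0\cup Y;A)=0$ for the second. You instead bypass the homological machinery entirely by computing $H_2$ directly as a quotient of relative cellular $2$-cycles (possible precisely because $C_3=0$) and reducing to the elementary set-theoretic identity $C_2(X_0;A)\cap C_2(Y;A)=C_2(Y_0;A)$. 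Your route is more elementary and transparent in this specific low-dimensional setting; the paper's excision-plus-triple argument is the kind that would generalise more readily to other situations where one cannot simply read off $H_2$ from the chain groups.
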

    \begin{proof}
        We follow an argument of Howie \cite{howie}*{Lemma 3.2}.
        Applying excision to the triple $\left(X_0\cup Y,Y,Y\smallsetminus Y_0\right)$ shows that the inclusion induces an isomorphism
        \[
            H_2\left(X_0,Y_0;A\right)\cong H_2\left(X_0\cup Y,Y;A\right).
        \]
        But since $X$ is a $2$-complex, $H_3\left(X,X_0\cup Y;A\right)=0$, so the long exact sequence of the triple $\left(X,X_0\cup Y,Y\right)$ shows that the inclusion induces an embedding
        \[
            H_2\left(X_0\cup Y,Y;A\right)\hookrightarrow H_2\left(X,Y;A\right).
        \]
        This proves that the inclusion-induced map $H_2\left(X_0,Y_0;A\right)\rightarrow H_2\left(X,Y;A\right)$ is injective.
    \end{proof}

\section{Standard form for admissible surfaces}\label{sec:std-form}

    The aim of this section is to reduce admissible surfaces to a certain standard form for the purpose of computing $\scl$ or the relative Gromov seminorm in surface groups.
    This standard form can be thought of as an analogue of Culler's fatgraphs \cite{culler}.

    \subsection{Incompressibility and monotonicity}\label{subsec:incomp-monot}
    
    Let $X$ be a topological space and let $c\in C_1(\pi_1X;\mathbb{Z})$.
    Suppose that we want to compute $\scl(c)$ or $\gromnorm{\alpha}$ for some $\alpha\in H_2(X,c)$, and consider an admissible surface $f:(\Sigma,\partial\Sigma)\rightarrow(X,c)$ with $f_*[\Sigma]=n(\Sigma)\alpha$.
    Recall that we have a commutative diagram:
    \[
        \vcenter{\hbox{\begin{tikzpicture}[every node/.style={draw=none,fill=none,rectangle}]
            \node (A) at (0,1.5) {$\partial\Sigma$};
            \node (B) at (2,1.5) {$\Sigma$};
            \node (Ap) at (0,0) {$\coprod_iS^1$};
            \node (Bp) at (2,0) {$X$};
            
            \draw [->] (A) -> (B) node [midway,above] {$\iota$};
            \draw [->] (Ap) -> (Bp) node [midway,above] {$\gamma$};
            \draw [->] (A) -> (Ap) node [midway,left] {$\partial f$};
            \draw [->] (B) -> (Bp) node [midway,left] {$f$};
        \end{tikzpicture}}}
    \]
    
    Observe first that we can harmlessly remove any disc- or sphere-component of $\Sigma$, since this does not change $\chi^-(\Sigma)$.
    We then say that $\Sigma$ is \emph{disc- and sphere-free}.
    
    Now assume that there is a simple closed curve $\beta$ in $\Sigma$ with null-homotopic image in $X$.
    Then we may cut $\Sigma$ along $\beta$ and glue two discs on the resulting boundary components.
    This makes $-\chi^-(\Sigma)$ decrease without changing $f_*[\Sigma]$, so it improves our estimate of $\scl(c)$ or $\gromnorm{\alpha}$.
    We can therefore always assume that $f$ is \emph{incompressible}: every noncontractible simple closed curve in $\Sigma$ has noncontractible image in $X$.
    
    Let $\partial_j$ be a boundary component of $\Sigma$; hence $\partial f$ sends $\partial_j$ to a component $S^1_{i}$ of $\coprod_iS^1$.
    We say that $f:\left(\Sigma,\partial\Sigma\right)\rightarrow(X,c)$ is \emph{monotone}\footnote{This definition is in general different from the usual definition of a monotone admissible surface (see \cite{cal-scl}*{Definition 2.12}), but the two definitions coincide if $f$ is an admissible surface for $\scl(c)$, or more generally if the coordinates of $\partial\alpha$ in the basis $\left(\left[S^1_i\right]\right)_i$ of $H_1\left(\coprod_iS^1\right)$ all have the same sign. Recall from \S{}\ref{subsec:scl-topo} that each circle in $\coprod_iS^1$ comes with an orientation, and that $\alpha$ might not necessarily map to all circles with positive orientation under $\partial$. In particular, if $\partial\alpha$ has two components of opposite orientations, then there is no monotone admissible surface for $\gromnorm{\alpha}$ in the usual sense.\label{footnote:monotone}} if the sign of the degree of the restriction
    \[
        \partial f_{\left|\partial_j\right.}:\partial_j\rightarrow S^1_i
    \]
    only depends on $i$.
    In other words, two boundary components of $\Sigma$ mapping to the same component of $\coprod_iS^1$ do so with the same orientation.
    
    In the context of $\scl$, it is a classical fact \cite{cal-scl}*{Proposition 2.13} that one can work with monotone admissible surfaces only.
    Our definition of monotonicity allows us to adapt this to the relative Gromov seminorm\footnote{This would be false with the usual definition --- see Footnote \ref{footnote:monotone}.}.
    Calegari's proof \cite{cal-scl}*{Proposition 2.13} works in our context, where one should deal with boundary components of $\Sigma$ mapping to distinct components of $\coprod_iS^1$ separately.
    See \cite{m:phd}*{Lemma II.1.4} for more details.
    
    \begin{lmm}[Monotone admissible surfaces]\label{lmm:monot}
        Fix a class $\alpha\in H_2(X,c)$. Given an admissible surface $f:(\Sigma,\partial\Sigma)\rightarrow(X,c)$ with $f_*[\Sigma]=n(\Sigma)\alpha$, there is a monotone admissible surface $f':\left(\Sigma',\partial\Sigma'\right)\rightarrow\left(X,c\right)$ with $f'_*\left[\Sigma'\right]=n\left(\Sigma'\right)\alpha$ such that
        \[
            \frac{-\chi^-\left(\Sigma'\right)}{n\left(\Sigma'\right)}\leq\frac{-\chi^-(\Sigma)}{n(\Sigma)}.
        \]
    \end{lmm}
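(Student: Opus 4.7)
The plan is to adapt the classical monotonicity argument of Calegari \cite{cal-scl}*{Proposition 2.13} from $\scl$ to the relative Gromov seminorm, working on one boundary circle $S^1_i$ of $\coprod_i S^1$ at a time. I may assume from the reductions immediately preceding the statement that $\Sigma$ is already disc- and sphere-free. Fix an index $i$, and let $\partial_1,\dots,\partial_r$ be the boundary components of $\Sigma$ mapping to $S^1_i$, with signed degrees $d_1,\dots,d_r\in\mathbb{Z}\smallsetminus\{0\}$ under $\partial f$. If all the $d_j$'s share a common sign there is nothing to do at $S^1_i$; otherwise I will cancel opposite-sign boundary components in pairs by gluing.

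Before gluing, I would pass to a finite cover of $\Sigma$ to arrange $|d_j|=d$ for every $j$ (and across all $S^1_i$'s simultaneously); taking a cover multiplies $-\chi^-$ and $n$ by the same factor and preserves $f_*[\Sigma]/n(\Sigma)=\alpha$, so the complexity ratio is unchanged. Now I would select a pair $(\partial_j,\partial_k)$ with $d_j=+d$, $d_k=-d$, and glue $\partial_j$ to $\partial_k$ via an orientation-reversing homeomorphism chosen so that the restrictions $\partial f|_{\partial_j}$ and $\partial f|_{\partial_k}$ agree after identification; such a homeomorphism exists because both restrictions are degree-$d$ covers of $S^1_i$ of opposite orientations. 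The resulting surface $\Sigma'$ satisfies $\chi(\Sigma')=\chi(\Sigma)$ because two boundary circles are identified into a single internal circle and $\chi(S^1)=0$; the map $f$ descends to $f':\Sigma'\to X$, and $\partial f'$ is the restriction of $\partial f$ to $\partial\Sigma\smallsetminus(\partial_j\cup\partial_k)$, so $f'$ is admissible with $n(\Sigma')=n(\Sigma)$. Iterating this over all mismatched pairs at each $S^1_i$ produces a monotone admissible surface, after which any new disc or sphere components can be removed without increasing the complexity ratio.

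The main obstacle I expect is verifying that the relative homology class is preserved, i.e.\ $f'_*[\Sigma']=f_*[\Sigma]$ in $H_2(X,c)$. In the ordinary $\scl$ version of the lemma this is automatic because one only tracks the boundary data $\partial f_*[\partial\Sigma]\in H_1(\coprod_i S^1)$; here one needs the stronger statement about the $H_2(X,c)$-class itself. To establish it I would view both classes as relative cycles in the mapping cylinder $X_\gamma$, and observe that the gluing differs from attaching a cylinder $S^1\times[0,1]$ whose two boundary circles map to $S^1_i$ with opposite orientations and whose interior is mapped into $S^1_i\subseteq X_\gamma$; hence $f_*[\Sigma]-f'_*[\Sigma']$ is supported in $\coprod_i S^1$ and therefore vanishes in $H_2(X_\gamma,\coprod_i S^1)=H_2(X,c)$. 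This is also the precise reason our definition of monotonicity only equates the \emph{signs} of the degrees component-by-component rather than demanding $\partial f$ be globally orientation-preserving (cf.\ Footnote~\ref{footnote:monotone}): cancelling mismatched pairs preserves $\alpha$, but cancelling all boundary components at a fixed $S^1_i$ would in general not.
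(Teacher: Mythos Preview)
Your proposal is correct and follows essentially the same approach as the paper, which simply says that Calegari's proof \cite{cal-scl}*{Proposition 2.13} works in this context provided one treats each circle $S^1_i$ separately, and defers the details to \cite{m:phd}*{Lemma II.1.4}. You have supplied exactly those details, and in particular you correctly isolate the one genuinely new point beyond Calegari's argument---that the class $f_*[\Sigma]\in H_2(X,c)$ is preserved under the gluing---and your justification via a cylinder mapped into $\coprod_i S^1\subseteq X_\gamma$ is sound.
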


    \subsection{Transversality}\label{subsec:transv}
        
    Similarly to Brady, Clay and Forester's proof of the Rationality Theorem \cite{bcf}, we will use the notion of transversality from \cite{brs}*{\S{}VII.2} to obtain a nice decomposition of admissible surfaces.
        
    Let $X$ be a $2$-complex, and let $c\in C_1(\pi_1X;\mathbb{Z})$ be an integral chain.
    Recall from \S{}\ref{subsec:scl-topo} that $c$ can be represented by a map $\gamma:\coprod_iS^1\rightarrow X$.
    We are considering an admissible surface $f:\left(\Sigma,\partial\Sigma\right)\rightarrow(X,c)$ with $f_*[\Sigma]=n(\Sigma)\alpha$.
    We can apply the Transversality Theorem from \cite{brs}*{\S{}VII.2} to ensure that $\gamma:\coprod_iS^1\rightarrow X$ and $f:\Sigma\rightarrow X$ are \emph{transverse}: this means that $\Sigma$ decomposes into subsurfaces mapping to vertices of $X$, $1$-handles (i.e. trivial $I$-bundles over edges of $X$), and discs mapping homeomorphically onto $2$-cells of $X$.
    
    We have seen in \S{}\ref{subsec:incomp-monot} that $f$ may be assumed to be incompressible.
    This implies that each subsurface of $\Sigma$ mapping to a vertex of $X$ is in fact a disc.
    Hence, $\Sigma$ decomposes into the following pieces:
    \begin{itemize}
        \item Discs mapping to vertices of $X$ --- called \emph{vertex discs},
        \item \emph{$1$-handles}, i.e. trivial $I$-bundles over edges of $X$, and
        \item Discs mapping homeomorphically onto $2$-cells of $X$ --- called \emph{cellular discs}.
    \end{itemize}
    We then say that $f:(\Sigma,\partial\Sigma)\rightarrow(X,c)$ is a \emph{transverse incompressible admissible surface}. See Figure \ref{fig:transv-surf}.    
    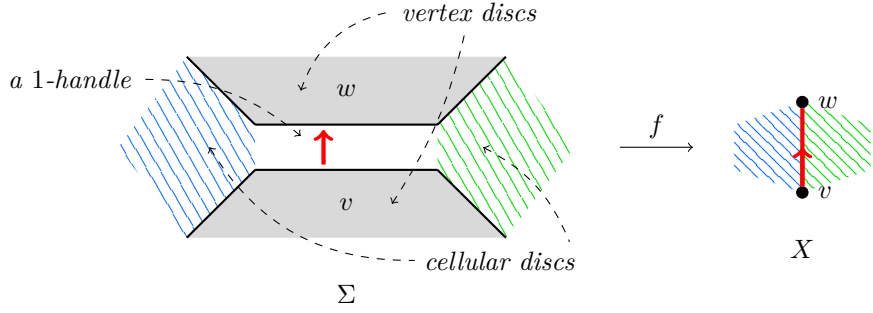
\begin{figure}[hbt]
        \centering
            \centering
            \begin{tikzpicture}[every node/.style={rectangle,draw=none,fill=none},scale=.6]
                \draw [draw=none,pattern={Lines[angle=-60,distance=4pt]},pattern color=NavyBlue] (0,0) -- (-1.5,-1.5) -- (-3,0.5) -- (-1.5,2.5) -- (0,1) -- (0,0);
                \draw [draw=none,pattern={Lines[angle=-60,distance=4pt]},pattern color=ForestGreen] (4,0) -- (5.5,-1.5) -- (7,0.5) -- (5.5,2.5) -- (4,1) -- (4,0);
                \draw [draw=none,fill=gray!30] (0,0) -- (4,0) -- (5.5,-1.5) -- (-1.5,-1.5) -- (0,0);
                \draw [draw=none,fill=gray!30] (0,1) -- (4,1) -- (5.5,2.5) -- (-1.5,2.5) -- (0,1);
                
                \draw [thick] (0,0) -- (4,0) (0,1) -- (4,1)
                    (0,0) -- (-1.5,-1.5) (0,1) -- (-1.5,2.5)
                    (4,0) -- (5.5,-1.5) (4,1) -- (5.5,2.5);
                \draw [ultra thick,->,red,shorten <= 2pt,shorten >= 2pt] (1.5,0) -- (1.5,1);
                
                \draw node at (2,-.75) {$v$};
                \draw node at (2,1.75) {$w$};
                \draw node at (2,-2.75) {$\Sigma$};
                
                \draw [draw=none,pattern={Lines[angle=-45,distance=3pt]},pattern color=NavyBlue] (12,-.5) -- (12,1.5) -- (10.5,1) -- (10.5,0) -- (12,-.5) -- (12,1.5);
                \draw [draw=none,pattern={Lines[angle=-45,distance=3pt]},pattern color=ForestGreen] (12,-.5) -- (12,1.5) -- (13.5,1) -- (13.5,0) -- (12,-.5) -- (12,1.5);
                
                \draw node [draw,circle,fill,inner sep=1.5pt,label=right:$v$] (v) at (12,-.5) {};
                \draw node [draw,circle,fill,inner sep=1.5pt,label=right:$w$] (w) at (12,1.5) {};
                \draw [ultra thick,red] (v) -- (w) coordinate [midway] (mid);
                \draw [ultra thick,red,->] (v) -- (mid);
                
                \draw node at (12,-1.75) {$X$};
                
                \draw [->] (8,.5) -- (9.6,.5) node [midway,above] {$f$};
                
                \draw [<-,dashed] (1,1.75) to [bend left=30] (3,3.5) node [right] (lab-vd) {\emph{vertex discs}};
                \draw [<-,dashed] (3,-.75) to [bend right=10] (lab-vd);
                \draw [<-,dashed] (-1,.5) to [bend right=30] (3.5,-2) node [right] (lab-cd) {\emph{cellular discs}};
                \draw [<-,dashed] (5,.5) to [bend left=20] (lab-cd.15);
                \draw [<-,dashed] (1,.75) to [bend right=20] (-2.5,2) node [left] {\emph{a $1$-handle}};
            \end{tikzpicture}
        \caption{Pieces of a transverse incompressible admissible surface.}
        \label{fig:transv-surf}
    \end{figure}

    \subsection{Connectedness of links}\label{subsec:conn-lks}
    
    We henceforth assume that the $2$-complex $X$ is a cellulated, oriented, compact, connected surface, and we'll denote it by $S$.
    
    Consider an admissible surface $f:(\Sigma,\partial\Sigma)\rightarrow(S,c)$ with $f_*[\Sigma]=n(\Sigma)\alpha$ in $H_2(S,c)$.
    We may assume that $f$ is transverse, incompressible, monotone, and disc- and sphere-free, as explained in \S{}\ref{subsec:incomp-monot} and \S{}\ref{subsec:transv}.
    
    We now want to use the fact that $S$ is a surface to ensure that $\Sigma$ is `thick enough', in the sense that its vertex discs have connected links.
    
    More precisely, consider the $2$-complex $\bar{\Sigma}$ obtained from $\Sigma$ by collapsing all vertex discs to vertices and all $1$-handles to edges --- hence, $f$ induces a combinatorial map $\bar{f}:\bar{\Sigma}\rightarrow S$, but $\bar{\Sigma}$ may not be a surface (see for instance Figure \ref{fig:disc-lnk}: the vertex disc at the centre of $\Sigma$ becomes a disconnecting vertex in $\bar{\Sigma}$).
    \begin{defi}
        We say that a transverse incompressible admissible surface $f:(\Sigma,\partial\Sigma)\rightarrow(S,c)$ has \emph{connected links} if the $2$-complex $\bar{\Sigma}$ has connected vertex links.
    \end{defi}
    
    If $f$ does not have connected links, let $D$ be a vertex disc in $\Sigma$ whose corresponding vertex in $\bar{\Sigma}$ has disconnected link.
    Let $v$ be the image of $D$ under $f$; so $v$ is a vertex of $S$.
    There are two cases: either $v$ lies in the interior of $S$, or on the boundary.
    
    Assume first that $v$ lies in the interior of $S$; an example is depicted in Figure \ref{fig:disc-lnk} (all the $2$-cells on the picture are triangles for simplicity, but the general case is similar).
    \begin{figure}[htb]
        \centering
            \centering
            \begin{tikzpicture}[every node/.style={rectangle,draw=none,fill=none},scale=.5]
                    (-1.25,-1.25) -- (.25,-2.75) (5.25,-1.25) -- (3.75,-2.75);
                \draw [ultra thick,->,red,shorten <= 1pt,shorten >= 1pt] (-.25,1.75) -- (-.75,2.25);
                \draw [ultra thick,->,red,shorten <= 1pt,shorten >= 1pt] (4.25,1.75) -- (4.75,2.25);
                \draw [ultra thick,->,red,shorten <= 1pt,shorten >= 1pt] (-.25,-1.75) -- (-.75,-2.25);
                \draw [ultra thick,->,red,shorten <= 1pt,shorten >= 1pt] (4.25,-1.75) -- (4.75,-2.25);
                \draw [draw=none,pattern={Lines[angle=-60,distance=4pt]},pattern color=NavyBlue] (-3,1.5) -- (-1.5,1.5) -- (-1,1) -- (-1,-1) -- (-1.5,-1.5) -- (-3,-1.5) -- cycle;
                \draw [draw=none,pattern={Lines[angle=-60,distance=4pt]},pattern color=ForestGreen] (7,1.5) -- (5.5,1.5) -- (5,1) -- (5,-1) -- (5.5,-1.5) -- (7,-1.5) -- cycle;
                \draw [draw=none,fill=gray!30] (-1.5,4.5) -- (0,3) -- (-1.5,1.5) -- (-3,1.5)
                    (-1.5,-4.5) -- (0,-3) -- (-1.5,-1.5) -- (-3,-1.5)
                    (5.5,4.5) -- (4,3) -- (5.5,1.5) -- (7,1.5)
                    (5.5,-4.5) -- (4,-3) -- (5.5,-1.5) -- (7,-1.5);
                \draw [thick,fill=gray!30] (-1,1) -- (-1,-1) -- (.5,-2.5) -- (3.5,-2.5) -- (5,-1) -- (5,1) -- (3.5,2.5) -- (.5,2.5) -- cycle;
                \draw [thick] (0,3) -- (-1.5,1.5) -- (-3,1.5) (4,3) -- (5.5,1.5) -- (7,1.5)
                    (0,-3) -- (-1.5,-1.5) -- (-3,-1.5) (4,-3) -- (5.5,-1.5) -- (7,-1.5);
                \draw [ultra thick] (-1.5,4.5) -- (.5,2.5) -- (3.5,2.5) -- (5.5,4.5)
                    (-1.5,-4.5) -- (.5,-2.5) -- (3.5,-2.5) -- (5.5,-4.5);
                    
                \draw node at (0,4) {$\partial\Sigma$};
                \draw node at (2,-4) {$\Sigma$};
                \draw node at (2,0) {$v$};
                \draw node at (-1.5,2.75) {$u_1$};
                \draw node at (5.5,2.75) {$u_3$};
                \draw node at (-1.5,-2.75) {$u_2$};
                \draw node at (5.5,-2.75) {$u_4$};
                
                \draw [<-,dashed] (1.75,1.25) to [bend left=15] (3,4) node [above right] {$D$};
                
                \draw [draw=none,pattern={Lines[angle=-60,distance=4pt]},pattern color=BurntOrange] (11,1.5) -- (12.5,0) -- (14,1.5);
                \draw [draw=none,pattern={Lines[angle=-60,distance=4pt]},pattern color=NavyBlue] (11,1.5) -- (12.5,0) -- (11,-1.5);
                \draw [draw=none,pattern={Lines[angle=-60,distance=4pt]},pattern color=Fuchsia] (11,-1.5) -- (12.5,0) -- (14,-1.5);
                \draw [draw=none,pattern={Lines[angle=-60,distance=4pt]},pattern color=ForestGreen] (14,1.5) -- (12.5,0) -- (14,-1.5);
                \draw node [draw,circle,fill,inner sep=1.5pt] (v) at (12.5,0) {};
                \draw node [draw,circle,fill,inner sep=1.5pt,label=above right:$u_4$] (u4) at (14,1.5) {};
                \draw node [draw,circle,fill,inner sep=1.5pt,label=above left:$u_1$] (u1) at (11,1.5) {};
                \draw node [draw,circle,fill,inner sep=1.5pt,label=below right:$u_3$] (u3) at (14,-1.5) {};
                \draw node [draw,circle,fill,inner sep=1.5pt,label=below left:$u_2$] (u2) at (11,-1.5) {};
                \foreach \i in {1,...,4}{\draw [ultra thick,red] (v) -- (u\i) coordinate [midway] (mid\i);
                    \draw[ultra thick,red,->] (v) -- (mid\i);}
                \draw [ultra thick,red] (u4) -- (u1) coordinate [midway] (mid);
                \draw[ultra thick,red,->] (u4) -- (mid);
                \draw [ultra thick,red] (u3) -- (u4) coordinate [midway] (mid);
                \draw[ultra thick,red,->] (u3) -- (mid);
                \draw [ultra thick,red] (u2) -- (u3) coordinate [midway] (mid);
                \draw[ultra thick,red,->] (u2) -- (mid);
                \draw [ultra thick,red] (u1) -- (u2) coordinate [midway] (mid);
                \draw[ultra thick,red,->] (u1) -- (mid);
                
                \draw node [circle,fill=white,inner sep=.1pt] at (12.5,1) {{\small$\sigma_1$}};
                \draw node [circle,fill=white,inner sep=.1pt] at (13.5,0) {{\small$\sigma_2$}};
                \draw node [circle,fill=white,inner sep=.1pt] at (12,0) {$v$};
                \draw node at (12.5,-3) {$S$};
                
                \draw [->] (8,0) -- (9.6,0) node [midway,above] {$f$};
            \end{tikzpicture}
        \caption{A vertex disc with disconnected link mapping to a vertex in the interior of $S$ (note that the map $f$ is orientation-preserving on the blue cellular disc but orientation-reversing on the green one).}
        \label{fig:disc-lnk}
    \end{figure}
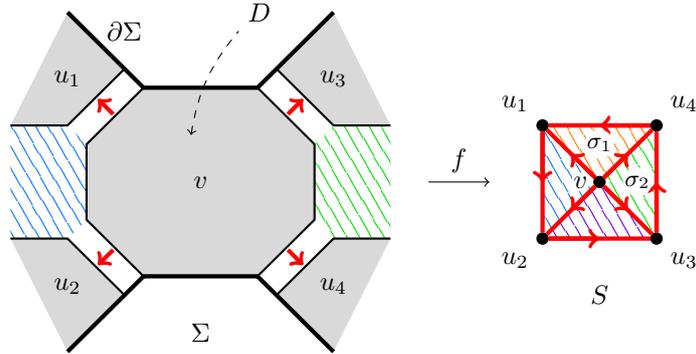
    The main point is that there is a $2$-cell in $S$ between any two consecutive edges around $v$; in particular, there is a sequence of $2$-cells $\sigma_1,\dots,\sigma_\ell$ lying between two successive edges whose preimages are $1$-handles with one end each on $\partial\Sigma$, as in Figure \ref{fig:disc-lnk}.
    Now we perform a homotopy that moves the image of $\partial\Sigma$ across the $2$-cells $\sigma_1,\dots,\sigma_\ell$: see Figure \ref{fig:lnk-conn}.
    \begin{figure}[htb]
        \centering
            \centering
            \begin{tikzpicture}[every node/.style={rectangle,draw=none,fill=none},scale=.45]
                \draw [draw=none,pattern={Lines[angle=-60,distance=4pt]},pattern color=NavyBlue] (-3,1.5) -- (-1.5,1.5) -- (-1,1) -- (-1,-1) -- (-1.5,-1.5) -- (-3,-1.5) -- cycle;
                \draw [draw=none,pattern={Lines[angle=-60,distance=4pt]},pattern color=ForestGreen] (7,1.5) -- (5.5,1.5) -- (5,1) -- (5,-1) -- (5.5,-1.5) -- (7,-1.5) -- cycle;
                    (-1.25,-1.25) -- (.25,-2.75) (5.25,-1.25) -- (3.75,-2.75);
                \draw [ultra thick,->,red,shorten <= 1pt,shorten >= 1pt] (-.25,1.75) -- (-.75,2.25);
                \draw [ultra thick,->,red,shorten <= 1pt,shorten >= 1pt] (4.25,1.75) -- (4.75,2.25);
                \draw [ultra thick,->,red,shorten <= 1pt,shorten >= 1pt] (-.25,-1.75) -- (-.75,-2.25);
                \draw [ultra thick,->,red,shorten <= 1pt,shorten >= 1pt] (4.25,-1.75) -- (4.75,-2.25);
                \draw [draw=none,fill=gray!30] (-1.5,4.5) -- (0,3) -- (-1.5,1.5) -- (-3,1.5)
                    (-1.5,-4.5) -- (0,-3) -- (-1.5,-1.5) -- (-3,-1.5)
                    (5.5,4.5) -- (4,3) -- (5.5,1.5) -- (7,1.5)
                    (5.5,-4.5) -- (4,-3) -- (5.5,-1.5) -- (7,-1.5);
                \draw [thick,fill=gray!30] (-1,1) -- (-1,-1) -- (.5,-2.5) -- (3.5,-2.5) -- (5,-1) -- (5,1) -- (3.5,2.5) -- (.5,2.5) -- cycle;
                \draw [thick] (0,3) -- (-1.5,1.5) -- (-3,1.5) (4,3) -- (5.5,1.5) -- (7,1.5)
                    (0,-3) -- (-1.5,-1.5) -- (-3,-1.5) (4,-3) -- (5.5,-1.5) -- (7,-1.5);
                \draw [ultra thick] (-1.5,4.5) -- (.5,2.5) -- (3.5,2.5) -- (5.5,4.5)
                    (-1.5,-4.5) -- (.5,-2.5) -- (3.5,-2.5) -- (5.5,-4.5);
                    
                \draw node at (2,0) {$v$};
                \draw node at (-1.5,2.75) {$u_1$};
                \draw node at (5.5,2.75) {$u_3$};
                \draw node at (-1.5,-2.75) {$u_2$};
                \draw node at (5.5,-2.75) {$u_4$};
                
                \draw node at (9,0) {{\Huge$\rightsquigarrow$}};
                
                \draw [draw=none,pattern={Lines[angle=-60,distance=4pt]},pattern color=NavyBlue] (11,1.5) -- (12.5,1.5) -- (13,.5) -- (13,-1) -- (12.5,-1.5) -- (11,-1.5) -- cycle;
                \draw [draw=none,pattern={Lines[angle=-60,distance=4pt]},pattern color=ForestGreen] (21,1.5) -- (19.5,1.5) -- (19,.5) -- (19,-1) -- (19.5,-1.5) -- (21,-1.5) -- cycle;
                \draw [draw=none,pattern={Lines[angle=-60,distance=4pt]},pattern color=BurntOrange] (13.5,1.5) -- (15.5,1.5) -- (15.5,.5) -- (14,.5) -- cycle;
                \draw [draw=none,pattern={Lines[angle=-60,distance=4pt]},pattern color=ForestGreen] (18.5,1.5) -- (16.5,1.5) -- (16.5,.5) -- (18,.5) -- cycle;
                \draw [ultra thick,->,red,shorten <= 1pt,shorten >= 1pt] (18.5,.5) -- (19,1.5);
                \draw [ultra thick,->,red,shorten <= 1pt,shorten >= 1pt] (13.5,.5) -- (13,1.5);
                \draw [ultra thick,->,red,shorten <= 1pt,shorten >= 1pt] (16,.5) -- (16,1.5);
                \draw [ultra thick,<-,red,shorten <= 1pt,shorten >= 1pt] (14.5,2) -- (15.5,2);
                \draw [ultra thick,->,red,shorten <= 1pt,shorten >= 1pt] (17.5,2) -- (16.5,2);
                \draw [ultra thick,->,red,shorten <= 1pt,shorten >= 1pt] (13.75,-1.75) -- (13.25,-2.25);
                \draw [ultra thick,->,red,shorten <= 1pt,shorten >= 1pt] (18.25,-1.75) -- (18.75,-2.25);
                \draw [draw=none,fill=gray!30] (12.5,4.5) -- (14.5,2.5) -- (14.5,1.5) -- (11,1.5)
                    (12.5,-4.5) -- (14,-3) -- (12.5,-1.5) -- (11,-1.5)
                    (19.5,4.5) -- (17.5,2.5) -- (17.5,1.5) -- (21,1.5)
                    (19.5,-4.5) -- (18,-3) -- (19.5,-1.5) -- (21,-1.5);
                \draw [thick,fill=gray!30] (13,-1) -- (14.5,-2.5) -- (17.5,-2.5) -- (19,-1) -- (19,.5) -- (17.5,.5) -- (14.5,.5) -- (13,.5) -- cycle
                    (16.5,2.5) -- (16.5,1.5) -- (15.5,1.5) -- (15.5,2.5) -- cycle;
                \draw [thick] (14.5,2.5) -- (14.5,1.5) -- (12.5,1.5) -- (11,1.5)
                    (17.5,2.5) -- (17.5,1.5) -- (19.5,1.5) -- (21,1.5)
                    (14,-3) -- (12.5,-1.5) -- (11,-1.5) (18,-3) -- (19.5,-1.5) -- (21,-1.5);
                \draw [ultra thick] (12.5,4.5) -- (14.5,2.5) -- (17.5,2.5) -- (19.5,4.5)
                    (12.5,-4.5) -- (14.5,-2.5) -- (17.5,-2.5) -- (19.5,-4.5);
                    
                \draw node at (16,-1) {$v$};
                \draw node at (12.5,2.75) {$u_1$};
                \draw node at (19.5,2.75) {$u_3$};
                \draw node at (12.5,-2.75) {$u_2$};
                \draw node at (19.5,-2.75) {$u_4$};
                \draw node at (16,2) {$u_4$};
                \draw node [circle,fill=white,inner sep=.1pt] at (14.5,1) {{\small$\sigma_1$}};
                \draw node [circle,fill=white,inner sep=.1pt] at (17.5,1) {{\small$\sigma_2$}};
            \end{tikzpicture}
        \caption{Making links of vertex discs connected (interior case).}
        \label{fig:lnk-conn}
    \end{figure}
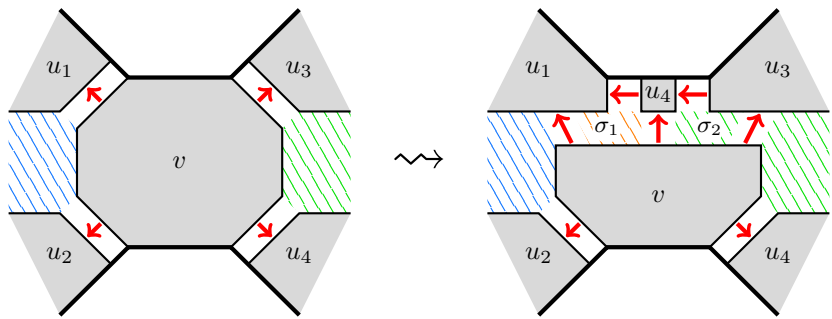
    The new map $f:\Sigma\rightarrow S$ defines an admissible surface $\left(\Sigma,\partial\Sigma\right)\rightarrow\left(S,c\right)$.
    Note that $f$ has been modified by a homotopy, so we still have $f_*[\Sigma]=n(\Sigma)\alpha$.
    
    This operation decreases the number of connected components in the link of $D$ (or more precisely, of its image in $\bar{\Sigma}$).
    Note that new vertex discs may have been created (such as the one mapping to $u_4$ in Figure \ref{fig:lnk-conn}), but they all have connected link.
    As for preexisting vertex discs of $\Sigma$, the operation doesn't impact the number of connected components of their links.
    Hence, if $\left\{D_i\right\}_i$ is the set of vertex discs of $\Sigma$ mapping to the interior of $S$, and $k_i$ denotes the number of connected components of the link of $D_i$, then we have made the quantity $\sum_i\left(k_i-1\right)$ decrease strictly.
    We may therefore iterate to ensure that all vertex discs mapping to the interior of $S$ have connected link.
    
    We deal with the case where $v$ lies on the boundary in the following way.
    We thicken $S$ by gluing a cellulated annulus to each of its boundary components.
    This modifies the cellular structure of $S$ but not its homeomorphism type (in other words, $S$ has been replaced with another cell complex $S'$ with more cells, but with $S'$ homeomorphic to $S$), and this preserves all the properties of the map $f$ --- in particular, $f$ is transverse for the new cellulation of $S$.
    Now all of the vertex discs with disconnected link map to the interior of $S$.
    Therefore, we can apply the operation described above to make all their links connected.
    This may create some new vertex discs mapping to the boundary, but they will have connected link.
    Hence, after both operations, all vertex discs have connected link.
    
    We therefore obtain the following:
    
    \begin{lmm}[Admissible surfaces with connected links]\label{lmm:conn-lks}        
        Fix $\alpha\in H_2\left(S,c\right)$, and let $f:(\Sigma,\partial\Sigma)\rightarrow(S,c)$ be a transverse incompressible admissible surface with $f_*[\Sigma]=n(\Sigma)\alpha$.
        Then, after possibly changing the cellular structure on $S$, the map $f$ may be homotoped to a transverse incompressible admissible surface with connected links.\qed
    \end{lmm}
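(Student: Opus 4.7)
The plan is to follow essentially the strategy already sketched in the discussion preceding the lemma statement: identify a vertex disc $D\subseteq\Sigma$ whose image in $\bar{\Sigma}$ has disconnected link, and modify $f$ locally near $D$ to reduce the number of components, iterating until the procedure terminates.

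First I would set up the monovariant. Let $\left\{D_i\right\}_i$ be the vertex discs of $\Sigma$, and let $k_i\geq 1$ denote the number of connected components of the link of $D_i$ in $\bar{\Sigma}$; our goal is to drive the finite quantity $\sum_i\left(k_i-1\right)$ to zero. Pick $D$ with $k(D)\geq 2$ and set $v=f(D)$. The key local observation, which uses that $S$ is a cellulated surface, is that the link of $v$ in $S$ is a circle (if $v$ lies in the interior) or an arc (if $v\in\partial S$), so around $v$ the edges are cyclically or linearly ordered with a $2$-cell of $S$ between any two successive edges. In particular, between two distinct components of $\Lk_{\bar{\Sigma}}(D)$ one finds at least one $2$-cell $\sigma$ of $S$ adjacent to two consecutive half-edges belonging to different components.

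Assume first that $v$ lies in the interior of $S$. The half-edges of the two link components sit on a common boundary arc of some $2$-cell $\sigma$ of $S$ whose preimage is a cellular disc bordered along two sides by $1$-handles ending on $\partial\Sigma$. I would then perform a homotopy of $f$ pushing $\partial\Sigma$ across this cellular disc and across any consecutive such cells $\sigma_1,\dots,\sigma_\ell$, as in Figure \ref{fig:lnk-conn}. This merges two components of $\Lk_{\bar{\Sigma}}(D)$ into one, so $k(D)$ strictly decreases. The operation does not change the homotopy class of $f$, so in particular $f_*[\Sigma]=n(\Sigma)\alpha$ is preserved, and by construction the new map is still transverse and incompressible. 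New vertex discs may appear as $\partial\Sigma$ crosses cells (e.g. the disc labelled $u_4$ in Figure \ref{fig:lnk-conn}), but each of them has connected link by construction, and the pre-existing vertex discs $D_j\neq D$ keep their value of $k_j$. Hence the monovariant $\sum_i\left(k_i-1\right)$ strictly decreases.

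For the boundary case, I would pre-compose by thickening $S$: glue a cellulated annulus along each boundary component of $S$, obtaining a homeomorphic cellulated surface $S'$ in which every vertex of $S$ now lies in the interior; transversality, incompressibility, and monotonicity of $f$ are all preserved, and the relative homology class is unchanged. Then every vertex disc with disconnected link maps to the interior of $S'$, and the interior argument applies. Iterating the procedure finitely many times (since $\sum_i\left(k_i-1\right)\in\mathbb{N}$) yields an admissible surface with connected links, as required. The main delicate point, and what makes the argument require care rather than being entirely routine, is the verification that the local homotopy genuinely reduces the monovariant --- i.e.\ that the new vertex discs it creates all have connected links, and that no preexisting vertex disc has its link damaged --- which is really a local check in the neighbourhood of $v$ that is settled by inspection of Figure \ref{fig:lnk-conn}.
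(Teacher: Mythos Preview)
Your proposal is correct and follows essentially the same route as the paper: set up the monovariant $\sum_i(k_i-1)$, push $\partial\Sigma$ across a sequence of $2$-cells $\sigma_1,\dots,\sigma_\ell$ at an interior vertex to merge two link components of $D$, check that new vertex discs have connected link while preexisting ones are unaffected, and handle boundary vertices by first thickening $S$ with collar annuli. One small wording slip: the $2$-cell $\sigma$ need not have any preimage in $\Sigma$ before the homotopy --- the operation \emph{adds} new cellular discs over $\sigma_1,\dots,\sigma_\ell$ rather than modifying existing ones --- but your subsequent description of pushing $\partial\Sigma$ across these cells shows you have the right picture.
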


    Note that since our admissible surface was modified by a homotopy, the properties of being incompressible, monotone, and disc- and sphere-free are preserved.

    \subsection{Folding}\label{subsec:or-nmix}
    
    The key properties of admissible surfaces that we will need in the surface group case are related to orientation.
    Indeed, both $S$ and the admissible surface $\Sigma$ are oriented.
    Since $f$ is transverse and cellular discs map homeomorphically into $S$, they can be of two types: either they preserve the orientation or they reverse it.
    Having cellular discs of opposite orientations is undesirable, and we are now going to modify $\Sigma$ to avoid this situation as much as possible.
    
    We assume that $f:\left(\Sigma,\partial\Sigma\right)\rightarrow\left(S,c\right)$ is transverse, incompressible, monotone, disc- and sphere-free, and with connected links.
    
    Suppose first that there is a connected component of $\Sigma$ containing cellular discs of two different types --- i.e.\ one is orientation-preserving with respect to $f$ and the other is orientation-reversing.
    Since $f:\left(\Sigma,\partial\Sigma\right)\rightarrow\left(S,c\right)$ has connected links, any two cellular discs in $\Sigma$ mapping to cells in $S$ with a common vertex on their boundaries must be connected by a path of cellular discs and $1$-handles in $\Sigma$.
    It follows that any two cellular discs in the same connected component of $\Sigma$ are connected by a path of cellular discs and $1$-handles.
    
    Therefore, $\Sigma$ must contain two cellular discs of opposite types that are adjacent via a $1$-handle.
    Since $S$ is a surface, those two cellular discs must map to the same $2$-cell of $S$, and we are in the situation of Figure \ref{fig:cells-opp-or} (pictures are given for the case of a $2$-cell of degree $3$, but the general case is similar) --- in other words, $f$ \emph{folds} those two cellular discs onto one another.    
    \begin{figure}[htb]
        \centering
            \centering
            \begin{tikzpicture}[every node/.style={rectangle,draw=none,fill=none},scale=.5]
                \draw [draw=none,pattern={Lines[angle=-60,distance=4pt]},pattern color=NavyBlue]
                    (0,.5) -- (-1.5,1.5) -- (-2,1) -- (-2,-1) -- (-1.5,-1.5) -- (0,-.5) -- cycle;
                \draw [draw=none,pattern={Lines[angle=-60,distance=4pt]},pattern color=NavyBlue]
                    (4,.5) -- (5.5,1.5) -- (6,1) -- (6,-1) -- (5.5,-1.5) -- (4,-.5) -- cycle;
                \draw [draw=none,fill=gray!30] (0,-.5) -- (4,-.5) -- (5.5,-1.5) -- (7.5,-3.5) -- (-3.5,-3.5) -- (-1.5,-1.5) -- cycle;
                \draw [draw=none,fill=gray!30] (0,.5) -- (4,.5) -- (5.5,1.5) -- (7.5,3.5) -- (-3.5,3.5) -- (-1.5,1.5) -- cycle;
                \draw [draw=none,fill=gray!30] (-4,3) -- (-2,1) -- (-2,-1) -- (-4,-3) -- cycle;
                \draw [draw=none,fill=gray!30] (8,3) -- (6,1) -- (6,-1) -- (8,-3) -- cycle;
                \draw [thick] (0,.5) -- (4,.5) (0,-.5) -- (4,-.5)
                    (0,-.5) -- (-1.5,-1.5) -- (-3.5,-3.5)
                    (0,.5) -- (-1.5,1.5) -- (-3.5,3.5)
                    (-4,3) -- (-2,1) -- (-2,-1) -- (-4,-3)
                    (4,-.5) -- (5.5,-1.5) -- (7.5,-3.5)
                    (4,.5) -- (5.5,1.5) -- (7.5,3.5)
                    (8,3) -- (6,1) -- (6,-1) -- (8,-3);
                \draw [ultra thick,->,red,shorten <= 1pt,shorten >= 1pt] (1.5,-.5) -- (1.5,.5) node [above] {{\small$e$}};
                \draw [ultra thick,->,red,shorten <= 1pt,shorten >= 1pt] (-2.5,1.5) -- (-2,2) node [above] {{\small$g$}};
                \draw [ultra thick,->,red,shorten <= 1pt,shorten >= 1pt] (-2,-2) -- (-2.5,-1.5) node [above] {{\small$f$}};
                \draw [ultra thick,->,red,shorten <= 1pt,shorten >= 1pt] (6.5,1.5) -- (6,2) node [above] {{\small$g$}};
                \draw [ultra thick,->,red,shorten <= 1pt,shorten >= 1pt] (6,-2) -- (6.5,-1.5) node [above] {{\small$f$}};
                
                \draw node at (-1,0) {\LARGE$\circlearrowleft$};
                \draw node at (5,0) {\LARGE$\circlearrowright$};
                \draw node at (2,-2) {$u$};
                \draw node at (2,2) {$w$};
                \draw node at (-3,0) {$v$};
                \draw node at (7,0) {$v$};
                \draw node at (2,-4) {$\Sigma$};
                
                \draw [draw=none,pattern={Lines[angle=-60,distance=4pt]},pattern color=NavyBlue] (14.5,-1.5) -- (12,0) -- (14.5,1.5) -- cycle;
                
                \draw node [draw,circle,fill,inner sep=1.5pt,label=below right:$u$] (u) at (14.5,-1.5) {};
                \draw node [draw,circle,fill,inner sep=1.5pt,label=left:$v$] (v) at (12,0) {};
                \draw node [draw,circle,fill,inner sep=1.5pt,label=above right:$w$] (w) at (14.5,1.5) {};
                \draw [ultra thick,red] (u) -- (v) coordinate [midway] (mid1);
                \draw [ultra thick,red,->] (u) -- (mid1) node [below left] {{\small$f$}};
                \draw [ultra thick,red] (v) -- (w) coordinate [midway] (mid2);
                \draw [ultra thick,red,->] (v) -- (mid2) node [above left] {{\small$g$}};
                \draw [ultra thick,red] (u) -- (w) coordinate [midway] (mid3);
                \draw [ultra thick,red,->] (u) -- (mid3) node [right] {{\small$e$}};
                
                \draw node at (13.66,0) {\LARGE$\circlearrowleft$};
                
                \draw node at (13,-3) {$S$};
                
                \draw [->] (9,0) -- (10.6,0) node [midway,above] {$f$};
            \end{tikzpicture}
        \caption{Adjacent cellular discs of opposite orientations.}
        \label{fig:cells-opp-or}
    \end{figure}
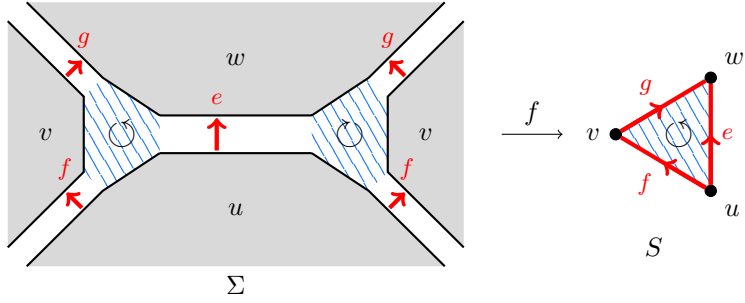
    
    In this case, we can delete the two adjacent cellular discs as illustrated in Figure \ref{fig:elim-opp-or}.
    \begin{figure}[htb]
        \centering
        \begin{tikzpicture}[every node/.style={rectangle,draw=none,fill=none},scale=.43]
            \draw [draw=none,pattern={Lines[angle=-60,distance=4pt]},pattern color=NavyBlue]
                (0,.5) -- (-1.5,1.5) -- (-2,1) -- (-2,-1) -- (-1.5,-1.5) -- (0,-.5) -- cycle;
            \draw [draw=none,pattern={Lines[angle=-60,distance=4pt]},pattern color=NavyBlue]
                (4,.5) -- (5.5,1.5) -- (6,1) -- (6,-1) -- (5.5,-1.5) -- (4,-.5) -- cycle;
            \draw [draw=none,fill=gray!30] (0,-.5) -- (4,-.5) -- (5.5,-1.5) -- (7.5,-3.5) -- (-3.5,-3.5) -- (-1.5,-1.5) -- cycle;
            \draw [draw=none,fill=gray!30] (0,.5) -- (4,.5) -- (5.5,1.5) -- (7.5,3.5) -- (-3.5,3.5) -- (-1.5,1.5) -- cycle;
            \draw [draw=none,fill=gray!30] (-4,3) -- (-2,1) -- (-2,-1) -- (-4,-3) -- cycle;
            \draw [draw=none,fill=gray!30] (8,3) -- (6,1) -- (6,-1) -- (8,-3) -- cycle;
            \draw [thick] (0,.5) -- (4,.5) (0,-.5) -- (4,-.5)
                (0,-.5) -- (-1.5,-1.5) -- (-3.5,-3.5)
                (0,.5) -- (-1.5,1.5) -- (-3.5,3.5)
                (-4,3) -- (-2,1) -- (-2,-1) -- (-4,-3)
                (4,-.5) -- (5.5,-1.5) -- (7.5,-3.5)
                (4,.5) -- (5.5,1.5) -- (7.5,3.5)
                (8,3) -- (6,1) -- (6,-1) -- (8,-3);
            \draw [ultra thick,->,red,shorten <= 1pt,shorten >= 1pt] (1.5,-.5) -- (1.5,.5) node [above] {{\small$e$}};
            \draw [ultra thick,->,red,shorten <= 1pt,shorten >= 1pt] (-2.5,1.5) -- (-2,2) node [above] {{\small$g$}};
            \draw [ultra thick,->,red,shorten <= 1pt,shorten >= 1pt] (-2,-2) -- (-2.5,-1.5) node [above] {{\small$f$}};
            \draw [ultra thick,->,red,shorten <= 1pt,shorten >= 1pt] (6.5,1.5) -- (6,2) node [above] {{\small$g$}};
            \draw [ultra thick,->,red,shorten <= 1pt,shorten >= 1pt] (6,-2) -- (6.5,-1.5) node [above] {{\small$f$}};
            
            \draw node at (-1,0) {\LARGE$\circlearrowleft$};
            \draw node at (5,0) {\LARGE$\circlearrowright$};
            \draw node at (2,-2) {$u$};
            \draw node at (2,2) {$w$};
            \draw node at (-3,0) {$v$};
            \draw node at (7,0) {$v$};
            
            \draw node at (10,0) {{\Huge$\rightsquigarrow$}};
            
            \draw [draw=none,fill=gray!30] (12.5,3.5) -- (15.5,1.5) -- (20.5,1.5) -- (23.5,3.5) -- cycle;
            \draw [draw=none,fill=gray!30] (12.5,-3.5) -- (15.5,-1.5) -- (20.5,-1.5) -- (23.5,-3.5) -- cycle;
            \draw [draw=none,fill=gray!30] (12,3) -- (15,1) -- (21,1) -- (24,3) -- (24,-3) -- (21,-1) -- (15,-1) -- (12,-3) -- cycle;
            \draw [thick] (12.5,3.5) -- (15.5,1.5) -- (20.5,1.5) -- (23.5,3.5) (12,3) -- (15,1) -- (21,1) -- (24,3)
                (12.5,-3.5) -- (15.5,-1.5) -- (20.5,-1.5) -- (23.5,-3.5) (12,-3) -- (15,-1) -- (21,-1) -- (24,-3);
            \draw [ultra thick,->,red,shorten <= 1pt,shorten >= 1pt] (19,1) -- (19,1.5) node [above] {{\small$g$}};
            \draw [ultra thick,->,red,shorten <= 1pt,shorten >= 1pt] (19,-1.5) -- (19,-1) node [above] {{\small$f$}};
            
            \draw node at (18,2.75) {$w$};
            \draw node at (18,0) {$v$};
            \draw node at (18,-2.75) {$u$};
        \end{tikzpicture}
        \caption{Eliminating cellular discs of opposite orientations.}
        \label{fig:elim-opp-or}
    \end{figure}
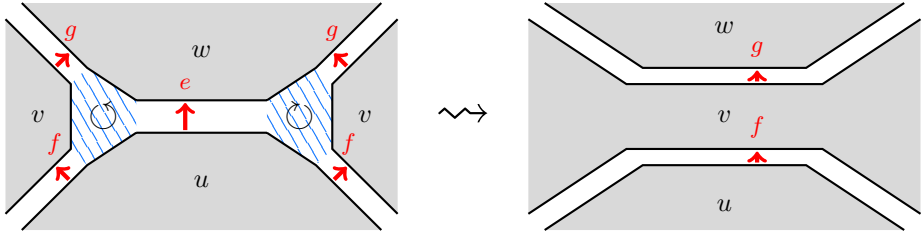
    This operation does not change the homotopy type of $\Sigma$ nor the boundary map $\partial f:\partial\Sigma\rightarrow\coprod_iS^1$.
    It also preserves the class $f_*[\Sigma]=n(\Sigma)\alpha$, as well as transversality, incompressibility, monotonicity of $f$, and the property of being disc- and sphere-free.
    Moreover, it makes the number of cellular discs of $\Sigma$ decrease strictly.
    Hence, after repeating a finite number of times, we may assume that each connected component of $\Sigma$ contains cellular discs of only one type: either orientation-preserving or orientation-reversing.
    
    In other words, we have reduced to the case where $\Sigma$ has the following property:
    \begin{defi}
        Let $S$ be a cellulated, oriented, compact, connected surface. A transverse incompressible admissible surface $f:(\Sigma,\partial\Sigma)\rightarrow(S,c)$ is said to be \emph{non-folded} if each connected component of $\Sigma$ contains cellular discs that are either all orientation-preserving or all orientation-reversing.
    \end{defi}
    
    However, the operation just described could have created some vertex discs with disconnected link in $\Sigma$ (as removing cellular discs amounts to deleting edges in the links of vertex discs).
    
    To fix this, we would like to successively apply the operation described above and the one of \S{}\ref{subsec:conn-lks}.
    This relies on the following crucial observation: in the process described in \S{}\ref{subsec:conn-lks}, we can always choose the orientation of the cellular discs that we add.
    Indeed, the added cellular discs correspond to a path in the link of $v$ in $S$, and this link is a circle, so there are two possible paths, one corresponding to adding cellular discs of positive orientation to $\Sigma$, and the other corresponding to adding cellular discs of negative orientation.
    For example, Figure \ref{fig:2-chcs} shows two possible choices that make the link of the vertex disc of Figure \ref{fig:disc-lnk} connected.
    In particular, when applying the operation of \S{}\ref{subsec:conn-lks}, we can assume that we are only adding cellular discs of \emph{positive} orientation.
    \begin{figure}[htb]
        \centering
            \centering
            \begin{tikzpicture}[every node/.style={rectangle,draw=none,fill=none},scale=.45]
                \draw [draw=none,pattern={Lines[angle=-60,distance=4pt]},pattern color=NavyBlue] (-3,1.5) -- (-1.5,1.5) -- (-1,1) -- (-1,-1) -- (-1.5,-1.5) -- (-3,-1.5) -- cycle;
                \draw [draw=none,pattern={Lines[angle=-60,distance=4pt]},pattern color=ForestGreen] (7,1.5) -- (5.5,1.5) -- (5,1) -- (5,-1) -- (5.5,-1.5) -- (7,-1.5) -- cycle;
                \draw [draw=none,pattern={Lines[angle=-60,distance=4pt]},pattern color=NavyBlue] (-.5,1.5) -- (1.5,1.5) -- (1.5,.5) -- (0,.5) -- cycle;
                \draw [draw=none,pattern={Lines[angle=-60,distance=4pt]},pattern color=Fuchsia] (4.5,1.5) -- (2.5,1.5) -- (2.5,.5) -- (4,.5) -- cycle;
                \draw [ultra thick,->,red,shorten <= 1pt,shorten >= 1pt] (4.5,.5) -- (5,1.5);
                \draw [ultra thick,->,red,shorten <= 1pt,shorten >= 1pt] (-.5,.5) -- (-1,1.5);
                \draw [ultra thick,->,red,shorten <= 1pt,shorten >= 1pt] (2,.5) -- (2,1.5);
                \draw [ultra thick,->,red,shorten <= 1pt,shorten >= 1pt] (.5,2) -- (1.5,2);
                \draw [ultra thick,<-,red,shorten <= 1pt,shorten >= 1pt] (3.5,2) -- (2.5,2);
                \draw [ultra thick,->,red,shorten <= 1pt,shorten >= 1pt] (-.25,-1.75) -- (-.75,-2.25);
                \draw [ultra thick,->,red,shorten <= 1pt,shorten >= 1pt] (4.25,-1.75) -- (4.75,-2.25);
                \draw [draw=none,fill=gray!30] (-1.5,4.5) -- (.5,2.5) -- (.5,1.5) -- (-3,1.5)
                    (-1.5,-4.5) -- (0,-3) -- (-1.5,-1.5) -- (-3,-1.5)
                    (5.5,4.5) -- (3.5,2.5) -- (3.5,1.5) -- (7,1.5)
                    (5.5,-4.5) -- (4,-3) -- (5.5,-1.5) -- (7,-1.5);
                \draw [thick,fill=gray!30] (-1,.5) -- (-1,-1) -- (.5,-2.5) -- (3.5,-2.5) -- (5,-1) -- (5,.5) -- (3.5,.5) -- (.5,.5) -- cycle;
                \draw [thick,fill=gray!30] (1.5,2.5) -- (1.5,1.5) -- (2.5,1.5) -- (2.5,2.5) -- cycle;
                \draw [thick] (.5,2.5) -- (.5,1.5) -- (-3,1.5) (3.5,2.5) -- (3.5,1.5) -- (7,1.5)
                    (0,-3) -- (-1.5,-1.5) -- (-3,-1.5) (4,-3) -- (5.5,-1.5) -- (7,-1.5);
                \draw [ultra thick] (-1.5,4.5) -- (.5,2.5) -- (3.5,2.5) -- (5.5,4.5)
                    (-1.5,-4.5) -- (.5,-2.5) -- (3.5,-2.5) -- (5.5,-4.5);
                    
                \draw node at (2,-1) {$v$};
                \draw node at (-1.5,2.75) {$u_1$};
                \draw node at (5.5,2.75) {$u_3$};
                \draw node at (-1.5,-2.75) {$u_2$};
                \draw node at (5.5,-2.75) {$u_4$};
                \draw node at (2,2) {$u_2$};
                \draw node at (-2,0) {\LARGE$\circlearrowleft$};
                \draw node at (6,0) {\LARGE$\circlearrowright$};
                \draw node at (.5,1) {\Large$\circlearrowright$};
                \draw node at (3.5,1) {\Large$\circlearrowright$};
                
                \draw node at (9,0) {{\large or}};
                
                \draw [draw=none,pattern={Lines[angle=-60,distance=4pt]},pattern color=NavyBlue] (11,1.5) -- (12.5,1.5) -- (13,.5) -- (13,-1) -- (12.5,-1.5) -- (11,-1.5) -- cycle;
                \draw [draw=none,pattern={Lines[angle=-60,distance=4pt]},pattern color=ForestGreen] (21,1.5) -- (19.5,1.5) -- (19,.5) -- (19,-1) -- (19.5,-1.5) -- (21,-1.5) -- cycle;
                \draw [draw=none,pattern={Lines[angle=-60,distance=4pt]},pattern color=BurntOrange] (13.5,1.5) -- (15.5,1.5) -- (15.5,.5) -- (14,.5) -- cycle;
                \draw [draw=none,pattern={Lines[angle=-60,distance=4pt]},pattern color=ForestGreen] (18.5,1.5) -- (16.5,1.5) -- (16.5,.5) -- (18,.5) -- cycle;
                \draw [ultra thick,->,red,shorten <= 1pt,shorten >= 1pt] (18.5,.5) -- (19,1.5);
                \draw [ultra thick,->,red,shorten <= 1pt,shorten >= 1pt] (13.5,.5) -- (13,1.5);
                \draw [ultra thick,->,red,shorten <= 1pt,shorten >= 1pt] (16,.5) -- (16,1.5);
                \draw [ultra thick,<-,red,shorten <= 1pt,shorten >= 1pt] (14.5,2) -- (15.5,2);
                \draw [ultra thick,->,red,shorten <= 1pt,shorten >= 1pt] (17.5,2) -- (16.5,2);
                \draw [ultra thick,->,red,shorten <= 1pt,shorten >= 1pt] (13.75,-1.75) -- (13.25,-2.25);
                \draw [ultra thick,->,red,shorten <= 1pt,shorten >= 1pt] (18.25,-1.75) -- (18.75,-2.25);
                \draw [draw=none,fill=gray!30] (12.5,4.5) -- (14.5,2.5) -- (14.5,1.5) -- (11,1.5)
                    (12.5,-4.5) -- (14,-3) -- (12.5,-1.5) -- (11,-1.5)
                    (19.5,4.5) -- (17.5,2.5) -- (17.5,1.5) -- (21,1.5)
                    (19.5,-4.5) -- (18,-3) -- (19.5,-1.5) -- (21,-1.5);
                \draw [thick,fill=gray!30] (13,-1) -- (14.5,-2.5) -- (17.5,-2.5) -- (19,-1) -- (19,.5) -- (17.5,.5) -- (14.5,.5) -- (13,.5) -- cycle
                    (16.5,2.5) -- (16.5,1.5) -- (15.5,1.5) -- (15.5,2.5) -- cycle;
                \draw [thick] (14.5,2.5) -- (14.5,1.5) -- (12.5,1.5) -- (11,1.5)
                    (17.5,2.5) -- (17.5,1.5) -- (19.5,1.5) -- (21,1.5)
                    (14,-3) -- (12.5,-1.5) -- (11,-1.5) (18,-3) -- (19.5,-1.5) -- (21,-1.5);
                \draw [ultra thick] (12.5,4.5) -- (14.5,2.5) -- (17.5,2.5) -- (19.5,4.5)
                    (12.5,-4.5) -- (14.5,-2.5) -- (17.5,-2.5) -- (19.5,-4.5);
                    
                \draw node at (16,-1) {$v$};
                \draw node at (12.5,2.75) {$u_1$};
                \draw node at (19.5,2.75) {$u_3$};
                \draw node at (12.5,-2.75) {$u_2$};
                \draw node at (19.5,-2.75) {$u_4$};
                \draw node at (16,2) {$u_4$};
                \draw node at (12,0) {\LARGE$\circlearrowleft$};
                \draw node at (20,0) {\LARGE$\circlearrowright$};
                \draw node at (14.5,1) {\Large$\circlearrowleft$};
                \draw node at (17.5,1) {\Large$\circlearrowleft$};
            \end{tikzpicture}
        \caption{Two possible choices for making the link of the vertex disc of Figure \ref{fig:disc-lnk} connected.}
        \label{fig:2-chcs}
    \end{figure}
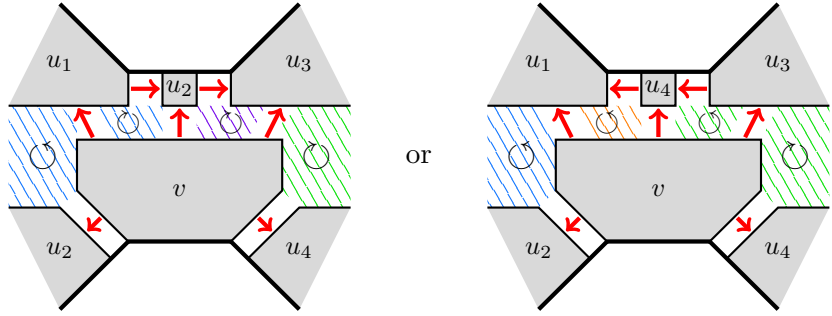
    
    We can now apply the following procedure: we alternately apply the operation of \S{}\ref{subsec:conn-lks} to make links connected --- and with the choice of only adding cellular discs of positive orientation --- and then the operation described above to remove folding.
    Each iteration of the latter removes one cellular disc of each orientation, and each iteration of the former does not increase the number of cellular discs of negative orientation.
    Hence, the total number of discs of negative orientation decreases strictly at each pair of iterations, ensuring that the process terminates in an admissible surface that is both non-folded and with connected links.
    
    Note that the operations just described do not impact the boundary map (up to homotopy), and hence the monotonicity, of $f$.
    Hence we can first apply Lemma \ref{lmm:monot} and ensure that the resulting surface is monotone.
    We can also readily remove any disc- or sphere-component.
    
    This proves the following:
    
    \begin{lmm}[Non-folded admissible surfaces]\label{lmm:or-nmix}
        Fix $\alpha\in H_2\left(S,c\right)$.
        Given a transverse incompressible admissible surface $f:(\Sigma,\partial\Sigma)\rightarrow(S,c)$ with $f_*[\Sigma]=n(\Sigma)\alpha$, there is a non-folded admissible surface $f':\left(\Sigma',\partial\Sigma'\right)\rightarrow\left(S,c\right)$ with connected links, with $f'_*\left[\Sigma'\right]=n\left(\Sigma'\right)\alpha$, and such that
        \[
            \frac{-\chi^-\left(\Sigma'\right)}{n\left(\Sigma'\right)}\leq\frac{-\chi^-(\Sigma)}{n(\Sigma)}.
        \]
        Moreover, $f'$ can be assumed to be monotone and disc- and sphere-free.\qed
    \end{lmm}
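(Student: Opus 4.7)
First, I would apply Lemma~\ref{lmm:monot} to replace $f$ by a monotone admissible surface, and harmlessly discard all disc and sphere components; both operations only decrease $-\chi^-/n$ and preserve $f_*[\Sigma]$ up to appropriate rescaling. Then I would invoke Lemma~\ref{lmm:conn-lks} to obtain a transverse incompressible admissible surface with connected links. If the result is already non-folded, we are done, so assume some connected component of $\Sigma$ carries cellular discs of both orientations.

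The local move is the one depicted in Figure~\ref{fig:elim-opp-or}. Since the surface has connected links, any two cellular discs in a common component are joined by a chain of cellular discs and $1$-handles, so I can find two cellular discs of opposite orientations that are adjacent across a $1$-handle. Because $S$ is a surface, the only way for two adjacent cellular discs to have opposite orientations is for them to map onto the same $2$-cell of $S$, forming a fold as in Figure~\ref{fig:cells-opp-or}. Cancelling this pair does not change the homotopy type of $\Sigma$, the class $f_*[\Sigma]$, or the boundary map up to homotopy, and it preserves transversality, incompressibility, monotonicity, and the absence of disc and sphere components.

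The main obstacle is that cancelling a fold removes edges from the links of the incident vertex discs, which may disconnect them; conversely, na\"ively reapplying Lemma~\ref{lmm:conn-lks} to fix the links could introduce new folds. I would resolve this by alternating the two moves and exploiting a binary choice in Lemma~\ref{lmm:conn-lks}: at each interior vertex $v$ of $S$, the link $\Lk_S(v)$ is a circle, and one may push the boundary of $\Sigma$ along either of the two arcs between the relevant pair of vertices in $\Lk_S(v)$. One of these arcs adds cellular discs of positive orientation to $\Sigma$, the other adds discs of negative orientation; always choosing the first ensures that link-connection never introduces orientation-reversing cellular discs.

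Let $N_-(\Sigma)$ denote the number of orientation-reversing cellular discs in $\Sigma$. With the above choice, link-connection leaves $N_-$ unchanged, while each fold-cancellation removes exactly one orientation-reversing disc (together with one orientation-preserving disc), strictly decreasing $N_-$. Since $N_-$ is a nonnegative integer, after finitely many pairs of steps we reach an admissible surface that is simultaneously non-folded and has connected links. A concluding application of Lemma~\ref{lmm:monot} and disc/sphere removal yields all remaining properties claimed in the statement, and the complexity bound follows because no step in the procedure increased $-\chi^-(\Sigma)/n(\Sigma)$.
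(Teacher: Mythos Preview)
Your proposal is correct and follows essentially the same argument as the paper: alternate fold-cancellation with link-reconnection, exploiting the binary choice in the link-reconnection step to add only positively oriented cellular discs, and use the strict decrease of $N_-$ for termination. The only cosmetic difference is that you apply Lemma~\ref{lmm:monot} twice (once before and once after), whereas the paper observes that the boundary map is unchanged by the procedure and so a single application suffices.
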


    \subsection{Asymptotic promotion to orientation-perfect surfaces}\label{subsec:or-perf}
    
    In order to obtain isometric embedding results for surface groups in \S{}\ref{sec:isom}, we will need admissible surfaces to satisfy the following orientation property:
    \begin{defi}
        Let $S$ be a cellulated, oriented, compact, connected surface. A transverse incompressible admissible surface $f:(\Sigma,\partial\Sigma)\rightarrow(S,\gamma)$ is \emph{orientation-perfect} if there are no two cellular discs in $\Sigma$ that map to the same $2$-cell of $S$ with opposite orientations.
    \end{defi}
    
    There is an operation which one may be very tempted to perform to obtain an orientation-perfect surface: given two cellular discs of $\Sigma$ mapping to the same $2$-cell $\sigma$ of $S$ with opposite orientations, one obtains a new admissible surface by removing the two cellular discs and gluing the two resulting boundary components to one another.
    This changes neither $\partial f$ nor $f_*[\Sigma]$; however, $-\chi^-(\Sigma)$ increases by $2$.
    If we do this carelessly, then we get a worse estimate of $\scl$ or $\gromnorm{\cdot}$.
    
    Instead, we will perform this operation in an asymptotic way that is inspired by Chen's \emph{asymptotic promotion} \cite{chen} --- albeit in a much simpler case.
    This comes at a cost: we won't be able to obtain extremal surfaces anymore; however, we will still be able to compute $\scl$ or $\gromnorm{\cdot}$.
    
    We start with a transverse, incompressible, monotone, disc- and sphere-free, non-folded admissible surface with connected links $f:(\Sigma,\partial\Sigma)\rightarrow(S,c)$ with $f_*[\Sigma]=n(\Sigma)\alpha$, and we assume that $f$ is not orientation-perfect.
    Fix a small $\varepsilon>0$, and pick a large $N\in\mathbb{N}_{\geq1}$ such that $\frac{1}{N}\leq\varepsilon$.
    Let ${\Sigma_0}\rightarrow\Sigma$ be a degree-$N$ covering under which the preimage of every connected component of $\Sigma$ is connected.
    The composite map ${\Sigma_0}\rightarrow\Sigma\rightarrow S$ is also a transverse, incompressible, monotone, disc- and sphere-free, non-folded admissible surface with connected links, with $\chi^-\left({\Sigma_0}\right)=N\chi^-(\Sigma)$ and $n\left({\Sigma_0}\right)=Nn\left(\Sigma\right)$.
    Since $f$ is non-folded but not orientation-perfect, there are two cellular discs in distinct components of ${\Sigma_0}$ that map to the same $2$-cell of $S$ with opposite orientations.
    We remove those two discs and glue the resulting boundary components to one another in a way that is compatible with the map $f$.
    There is an admissible surface $f':\left({\Sigma}_0',\partial{\Sigma}_0'\right)\rightarrow(S,c)$ resulting from this operation, which is still transverse, incompressible, monotone, disc- and sphere-free; it satisfies $f'_*\left[{\Sigma}_0'\right]=Nn\left(\Sigma\right)\alpha$, and
    \[
        -\chi^-\left({\Sigma}_0'\right)=-\chi^-\left({\Sigma_0}\right)+2=-N\chi^-(\Sigma)+2.
    \]
    Therefore
    \[
        \frac{-\chi^-\left({\Sigma}_0'\right)}{n\left({\Sigma}_0'\right)}\leq\frac{-\chi^-(\Sigma)}{n(\Sigma)}+2\varepsilon.
    \]
    We can then perform the process of \S{}\ref{subsec:or-nmix} again to ensure that ${\Sigma}_0'$ is non-folded and with connected links.
    
    After the complete operation, the number of connected components of $\Sigma$ has decreased by one, while the quantity $\frac{-\chi^-(\Sigma)}{n(\Sigma)}$ has not increased more than a controlled arbitrarily small amount.
    Since $\Sigma$ has a finite number of connected components, we may iterate until we obtain an orientation-perfect surface.
    We obtain the following:
    \begin{lmm}[Orientation-perfect admissible surfaces]\label{lmm:or-perf}
        Fix $\alpha\in H_2\left(S,c\right)$.
        Given an $\varepsilon>0$ and a transverse incompressible admissible surface $f:(\Sigma,\partial\Sigma)\rightarrow(S,c)$ with $f_*[\Sigma]=n(\Sigma)\alpha$, there is an orientation-perfect admissible surface $f':\left(\Sigma',\partial\Sigma'\right)\rightarrow\left(S,c\right)$, with $f'_*\left[\Sigma'\right]=n\left(\Sigma'\right)\alpha$, and such that
        \[
            \frac{-\chi^-\left(\Sigma'\right)}{n\left(\Sigma'\right)}\leq\frac{-\chi^-(\Sigma)}{n(\Sigma)}+\varepsilon.
        \]
        Moreover, $f'$ can be assumed to be monotone, disc- and sphere-free, non-folded, and with connected links.\qed
    \end{lmm}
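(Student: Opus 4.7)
The plan is to follow the asymptotic promotion strategy outlined in \S{}\ref{subsec:or-perf}, iteratively reducing the number of connected components of $\Sigma$ while keeping the ratio $-\chi^-/n$ under tight control. First I would invoke Lemmas \ref{lmm:monot} and \ref{lmm:or-nmix} to reduce to the case where $f$ is already monotone, disc- and sphere-free, non-folded, and with connected links, at no cost in the ratio. Writing $k$ for the number of connected components of this intermediate $\Sigma$, I would then pick an integer $N \geq 2k/(\varepsilon n(\Sigma))$ and pass to a degree-$N$ covering $\Sigma_0 \to \Sigma$ under which every component of $\Sigma$ lifts connectedly; the composite $\Sigma_0 \to \Sigma \to S$ preserves all of the above properties and has the same value of $-\chi^-/n$.

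If $\Sigma_0$ is orientation-perfect, we are done. Otherwise, since $\Sigma_0$ is non-folded, any two cellular discs mapping to a common $2$-cell of $S$ with opposite orientations must lie in distinct connected components of $\Sigma_0$. The core move is then to excise these two cellular discs and identify the resulting pair of boundary arcs in a way compatible with $f$, producing $\Sigma_0'$ with $n(\Sigma_0') = Nn(\Sigma)$, with $-\chi^-(\Sigma_0') = -\chi^-(\Sigma_0) + 2$, and with one fewer connected component than $\Sigma_0$. I would then reapply Lemma \ref{lmm:or-nmix} to restore the non-folded and connected-links properties; since that lemma's constituent moves (the link-connecting move of Figures \ref{fig:lnk-conn}--\ref{fig:2-chcs} and the fold-elimination move of Figure \ref{fig:elim-opp-or}) each act locally and preserve the homotopy type of the surface, the number of connected components and the ratio $-\chi^-/n$ are both unchanged by this clean-up.

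Iterating, each round strictly decreases the component count and increases the ratio by at most $2/(Nn(\Sigma))$, so the process terminates in at most $k$ rounds. At termination the resulting surface is orientation-perfect: otherwise there would be two cellular discs of opposite orientation mapping to the same $2$-cell of $S$, and being non-folded these would sit in different components, contradicting the fact that the excise-and-glue move was no longer applied. The total accumulated error in the ratio is at most $2k/(Nn(\Sigma)) \leq \varepsilon$ by the choice of $N$, yielding the desired bound. Moreover, since all intermediate operations preserve monotonicity, disc- and sphere-freeness, the non-folded property, and connected links, the final admissible surface $f'$ inherits all of these.

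The delicate point I expect to have to verify carefully is the invariant that each reapplication of Lemma \ref{lmm:or-nmix} neither splits nor merges connected components and does not undo the orientation progress: the link-connecting move only adds cellular discs of a prescribed (positive) orientation, and the fold-elimination move only deletes cellular discs and so cannot create new orientation conflicts. Combined with the strict decrease of the component count at each excise-and-glue step, these observations make the asymptotic promotion terminate in a bounded number of steps with a uniformly small $\varepsilon$-loss, which is precisely what the statement requires.
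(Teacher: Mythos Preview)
Your proposal is correct and follows essentially the same asymptotic-promotion argument as the paper: pass to a large-degree connected cover, excise a pair of oppositely-oriented cellular discs lying in distinct components, glue along the resulting boundary circles, clean up via the operations of \S{}\ref{subsec:conn-lks}--\S{}\ref{subsec:or-nmix}, and iterate until orientation-perfect. The only organisational difference is that you fix a single $N\geq 2k/(\varepsilon\, n(\Sigma))$ at the outset and perform all the gluings inside that one cover with an explicit error budget, whereas the paper phrases the iteration as choosing a fresh arbitrarily small error at each step; the content is the same.
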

    
    \begin{rmk}\label{rk:asymp-prom-not-nec}
        If the $1$-chain $c$ consists simply of an element $w\in\pi_1S$, and if $f:(\Sigma,\partial\Sigma)\rightarrow(S,w)$ is an admissible surface for $\scl(w)$ (not for $\gromnorm{\alpha}$), then we can bypass the asymptotic promotion argument and in fact replace $\Sigma$ with a connected admissible surface.
        Indeed, consider the connected components $\left\{\Sigma_i\right\}_i$ of $\Sigma$, and observe that the restriction of $f$ to each $\Sigma_i$ is an admissible surface for $\scl(w)$.
        (But note that distinct components may represent distinct classes in $H_2(S,w)$.)
        We have
        \[
            \frac{-\chi^-(\Sigma)}{n(\Sigma)}=\frac{\sum_i\left(-\chi^-\left(\Sigma_i\right)\right)}{\sum_in\left(\Sigma_i\right)}\geq\min_i\frac{-\chi^-\left(\Sigma_i\right)}{n\left(\Sigma_i\right)}.
        \]
        Hence there is a component $\Sigma_i$ of $\Sigma$ for which ${-\chi^-\left(\Sigma_i\right)}/{n\left(\Sigma_i\right)}\leq{-\chi^-(\Sigma)}/{n(\Sigma)}$, and we may replace $\Sigma$ with $\Sigma_i$.
        Now $\Sigma$ is connected, so making it non-folded is enough to guarantee that it is orientation-perfect.
    \end{rmk}
    
    \subsection{Standard form}
    
    We have shown the following:
    \begin{prop}[Standard form]\label{prop:std-form}
        Let $S$ be an oriented, compact, connected surface, let $c\in C_1\left(\pi_1S;\mathbb{Z}\right)$ be an integral chain, and $\alpha\in H_2\left(S,c;\mathbb{Q}\right)$. Then
        \begin{enumerate}
            \item The relative Gromov seminorm of $\alpha$ can be computed via
            \[
                \gromnorm{\alpha}=\inf_{f,\Sigma}\frac{-2\chi^-(\Sigma)}{n(\Sigma)},
            \]
            where the infimum is taken over all admissible surfaces $f:(\Sigma,\partial\Sigma)\rightarrow\left(S,c\right)$ that are transverse, incompressible, monotone, disc- and sphere-free, orientation-perfect, with connected links for some cellulation of $S$.
            
            Such an admissible surface is said to be in \emph{perfect standard form}.
            
            \item If there exists an extremal surface for $\gromnorm{\alpha}$ (i.e.\ realising the infimum in Definition \ref{defi:rel-grom}), then there exists one which is transverse, incompressible, monotone, disc- and sphere-free, non-folded, with connected links for some cellulation of $S$.
            
            Such an admissible surface is said to be in \emph{standard form}.
        \end{enumerate}
    \end{prop}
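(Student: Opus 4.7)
The plan is to assemble the statement by composing, in the correct order, the reductions established earlier in this section, so the work is really one of sequencing and bookkeeping rather than of introducing new constructions. The crucial distinction between parts (i) and (ii) is that every reduction except the very last one is non-increasing for the ratio $-\chi^-(\Sigma)/n(\Sigma)$, whereas the final asymptotic promotion to an orientation-perfect surface (Lemma \ref{lmm:or-perf}) only controls this quantity up to an arbitrary $\varepsilon>0$. This is what forces orientation-perfectness to appear only in part (i), which concerns an infimum, and to be dropped from part (ii), which concerns realising the infimum.

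For part (i), the inequality $\leq$ in the displayed formula is immediate, since a perfect-standard-form surface is in particular an admissible surface for $\gromnorm{\alpha}$. For the reverse inequality, fix $\varepsilon>0$ and pick an admissible surface $f:(\Sigma,\partial\Sigma)\to(S,c)$ with $f_*[\Sigma]=n(\Sigma)\alpha$ realising $\gromnorm{\alpha}$ up to $\varepsilon$. I would then apply in turn: the Transversality Theorem from \cite{brs}*{\S{}VII.2} to obtain transversality; the observations of \S{}\ref{subsec:incomp-monot} to ensure incompressibility and remove disc and sphere components; Lemma \ref{lmm:monot} to obtain monotonicity; Lemma \ref{lmm:conn-lks} to arrange connected links (after possibly refining the cellular structure of $S$); Lemma \ref{lmm:or-nmix} to obtain the non-folded property together with connected links; and finally Lemma \ref{lmm:or-perf} to obtain orientation-perfectness at the cost of an additive error that can be made $\leq\varepsilon$. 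Letting $\varepsilon\to 0$ yields the formula.

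For part (ii), the same chain of reductions applies, but the orientation-perfect step is omitted, since it is the only one that can strictly increase $-\chi^-(\Sigma)/n(\Sigma)$. All other operations preserve or strictly decrease this ratio, so starting from an extremal surface and applying them produces an extremal surface in standard form.

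The main point to verify, which is the reason the ordering of the lemmas matters, is the interaction between folding elimination and the connectedness of vertex-disc links: removing a pair of folded cellular discs can disconnect a link, and conversely making links connected can create folded pairs. This tension is already resolved inside the proof of Lemma \ref{lmm:or-nmix} by alternating the two moves and using the freedom to choose the orientation of cellular discs added in the link-connection step, so that the number of negatively oriented discs decreases at each pass. Granted that lemma, the present proposition follows by invoking the reductions in the sequence described above.
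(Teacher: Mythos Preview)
Your proposal is correct and follows essentially the same approach as the paper: the paper's proof is a one-line citation of \S\ref{subsec:transv} and Lemmas \ref{lmm:monot}, \ref{lmm:conn-lks}, \ref{lmm:or-nmix}, \ref{lmm:or-perf}, and you have expanded this into the explicit sequencing of reductions, correctly identifying that the asymptotic promotion of Lemma \ref{lmm:or-perf} is the sole step that fails to preserve extremality and hence must be omitted in part (ii). Your additional remarks on the ordering and on the folding/link-connectedness interaction are accurate and already handled inside the proof of Lemma \ref{lmm:or-nmix}, as you note.
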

    \begin{proof}
        This follows from \S{}\ref{subsec:transv} (for transversality) and Lemmas \ref{lmm:monot} (for monotonicity), \ref{lmm:conn-lks} (for connected links), \ref{lmm:or-nmix} (for the non-folding property), and \ref{lmm:or-perf} (for the orientation-perfect property).
    \end{proof}
    
    \begin{rmk}
        By the discussion of \S{}\ref{subsec:or-nmix}, an orientation-perfect admissible surface is automatically non-folded.
        It follows that an admissible surface in perfect standard form is also in standard form.
    \end{rmk}
        
    It follows from Proposition \ref{prop:scl-inf-gt} that the obvious analogue of Proposition \ref{prop:std-form} holds for $\scl$: the stable commutator length of $c$ can be computed with surfaces in perfect standard form, and if there exists an extremal surface, then there exists one in standard form.
    
    Moreover, if the $1$-chain $c$ consists of a single element $w\in\pi_1S$, and if there exists an extremal surface for $\scl(w)$, then there exists one in perfect standard form (see Remark \ref{rk:asymp-prom-not-nec}).

\section{Isometries for scl and the relative Gromov seminorm}\label{sec:isom}
    
    We now have all the tools we need to prove our isometric embedding theorems.
    We consider $S$ an oriented, compact, connected surface, and $T\subseteq S$ a subsurface that is \emph{$\pi_1$-injective}, in the sense that the induced morphism
    \[
        \iota:\pi_1T\hookrightarrow\pi_1S
    \]
    is injective.
    We would like to understand when $\iota$ is a (strong) isometric embedding for $\scl$ or the relative Gromov seminorm.
    We will identify $\pi_1T$ with its image in $\pi_1S$.
    Hence, a chain $c\in C_1\left(\pi_1T;\mathbb{Z}\right)$ can also be seen as a chain in $C_1\left(\pi_1S;\mathbb{Z}\right)$, and admissible surfaces for $c$ can be considered either in $T$ or in $S$.
    
    \subsection{Main theorem}
    
    Our main technical result is the following, which says that, with our standard form and with appropriate homology vanishing conditions, an admissible surface in $S$ for a chain in $T$ is in fact entirely contained in $T$.
    
    \begin{thm}\label{thm:main}
        Let $S$ be a cellulated, oriented, compact, connected surface, let $T\subseteq S$ be a $\pi_1$-injective subcomplex, let $c\in C_1\left(\pi_1T;\mathbb{Z}\right)$ be an integral chain, and let $\alpha\in H_2(S,c;\mathbb{Q})$.
        Consider an admissible surface $f:(\Sigma,\partial\Sigma)\rightarrow(S,c)$ in $S$ with $f_*[\Sigma]=n(\Sigma)\alpha$ for some $n(\Sigma)\in\mathbb{N}_{\geq1}$, and with $f\left(\partial\Sigma\right)\subseteq T$.
        
        Let $R=\mathbb{Z}$ or $\mathbb{Q}$ or $\mathbb{R}$ and assume that one of the following holds:
        \begin{enumerate}
            \item $f$ is in standard form and $H_2(S,T;R)=0$, or\label{thm:main-1}
            \item $f$ is in perfect standard form and $f_*[\Sigma]=0$ in $H_2(S,T;R)$.\label{thm:main-2}
        \end{enumerate}
        Then $f(\Sigma)\subseteq T$.
    \end{thm}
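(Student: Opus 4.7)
The plan is to introduce the subcomplex $X_0=T\cup f(\Sigma)\subseteq S$ (which makes sense because the transversality of $f$ realises $\Sigma$ as a union of vertex discs, $1$-handles and cellular discs mapped cellularly to $S$) and to show $X_0\subseteq T$, by combining the standard-form properties of $f$ with the homology-vanishing hypothesis of each case.

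The first substantive step is to verify that $\partial X_0\subseteq T$, where $\partial$ refers to the $2$-complex boundary of Definition \ref{defi:boundary}. Let $e$ be an edge of $X_0$ not in $T$; then $e\in f(\Sigma)$ is the image of some $1$-handle $H$. Each long side of $H$ must be attached to a cellular disc of $\Sigma$: otherwise a long side on $\partial\Sigma$ would contribute an edge of $\partial\Sigma$ mapping to $e$, contradicting $f(\partial\Sigma)\subseteq T$. Both cellular discs attached to $H$ map to $2$-cells of $S$ adjacent to $e$. If $e$ lies in the interior of $S$, the two $2$-cells on either side of $e$ are both in $X_0$, so $e\notin\partial X_0$. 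If $e\subseteq\partial S$, both cellular discs must map to the unique $2$-cell of $S$ adjacent to $e$, and $\Sigma$ locally folds two cellular discs onto the same $2$-cell along $H$; a local orientation argument then shows the two cellular discs have opposite orientations under $f$, contradicting the non-folded property. Hence $e\notin\partial X_0$ in either case.

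For case \ref{thm:main-1} I would invoke the surface criterion (Lemma \ref{lmm:srf-crit}) to say $S$ has small links, then Lemmas \ref{lmm:subcx-stab-or} and \ref{lmm:subcx-stab-van-fund-class} to transfer both $R$-orientability and the vanishing of $H_2(\cdot,T;R)$ from $S$ to $X_0$. Together with $\partial X_0\subseteq T$, Lemma \ref{lmm:or-h2-zero} applied to the pair $(X_0,T)$ forces every $2$-cell of $X_0$ to lie in $T$. For case \ref{thm:main-2}, I would instead work directly with a cellular representative $b\in C_2(\Sigma;R)$ of the fundamental class $[\Sigma]\in H_2(\Sigma,\partial\Sigma;R)$. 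Since $S$ is a $2$-complex, the group $H_2(S,T;R)$ identifies with $\{d\in C_2(S;R):\partial d\in C_1(T;R)\}/C_2(T;R)$, so the assumption $f_*[\Sigma]=0$ in $H_2(S,T;R)$ translates to $f_*b\in C_2(T;R)$. The orientation-perfect property ensures that for each $2$-cell $\sigma$ of $S$ all cellular discs of $\Sigma$ mapping to $\sigma$ do so with the same orientation under $f$, so the coefficient of $\sigma$ in $f_*b$ has absolute value equal to the number of such discs, and is nonzero precisely when $\sigma\in f(\Sigma)$; hence $f_*b\in C_2(T;R)$ forces every $2$-cell of $f(\Sigma)$ to lie in $T$.

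Finally I would upgrade the conclusion ``every $2$-cell of $f(\Sigma)$ is in $T$'' to $f(\Sigma)\subseteq T$. Each edge of $f(\Sigma)$ either arises from a $1$-handle with a long side on $\partial\Sigma$ (hence lies in $f(\partial\Sigma)\subseteq T$) or is on the boundary of some cellular disc's image (hence lies in $T$ because $T$ is a subcomplex containing that $2$-cell). Vertices are handled analogously, the only interesting case being a vertex disc not adjacent to any cellular disc: the connected links condition then forces it to have exactly one incident $1$-handle and to be bordered by arcs of $\partial\Sigma$, so its image in $S$ lies in $T$. The step I expect to be most delicate is the boundary sub-case of the second paragraph, where verifying that non-folding really excludes a fold over $\partial S$ requires a careful local orientation computation in $\Sigma$ near a boundary edge of $S$.
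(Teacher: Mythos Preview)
Your approach is essentially the paper's, with one cosmetic difference: you take $X_0 = T \cup f(\Sigma)$ whereas the paper takes $S_0 = \Imm f$ together with $T_0 = S_0 \cap T$. Both choices feed into the same subcomplex-stability lemmas and into Lemma~\ref{lmm:or-h2-zero}, and your direct cellular-chain treatment of case~\ref{thm:main-2} is the same computation the paper does inside $S_0$.

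There is one genuine gap in your $\partial X_0 \subseteq T$ argument. In the interior case you assert that ``the two $2$-cells on either side of $e$ are both in $X_0$'', which presumes that the two cellular discs on opposite long sides of $H$ land on \emph{opposite} sides of $e$ in $S$. Transversality alone does not force this: both discs can map to the \emph{same} $2$-cell via the same incidence to $e$ --- this is exactly the fold configuration of Figure~\ref{fig:cells-opp-or}. You must exclude it with the non-folded property, just as in your boundary case; so the ``delicate'' step you flag at the end in fact already appears in the interior case. The paper sidesteps the interior/boundary split entirely by arguing on $\partial S_0$: if $e \in \partial S_0$, then by definition $e$ meets only one $2$-cell incidence in $S_0$, so the two cellular discs on $H$ are forced into that single incidence and the orientation argument yields a fold immediately. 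Rewriting your claim in that contrapositive form (``if $e\in\partial X_0\smallsetminus T$ then only one incidence of a $2$-cell of $X_0$ meets $e$, forcing a fold'') removes the case split and closes the gap.
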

    \begin{proof}
        Consider $S_0=\Imm f\subseteq S$, and let $T_0=S_0\cap T$.
        Hence, $S_0$ is a subcomplex of $S$, and $T_0$ is a ($\pi_1$-injective) subcomplex of $S_0$.
        The map $f$ induces $f_0:\Sigma\rightarrow S_0$.
        Our subcomplex stability lemmas imply that
        \begin{itemize}
            \item $S_0$ is an $R$-orientable $2$-complex with small links by Lemmas \ref{lmm:subcx-stab-lks} and \ref{lmm:subcx-stab-or};
            \item With assumption \ref{thm:main-1} (i.e. $H_2(S,T;R)=0$), we have $H_2\left(S_0,T_0;R\right)=0$ by Lemma \ref{lmm:subcx-stab-van-fund-class};
            \item With assumption \ref{thm:main-2} (i.e. $f_*[\Sigma]=0$ in $H_2(S,T;R)$), then ${f_0}_*[\Sigma]=0$ in $H_2\left(S_0,T_0;R\right)$ by Lemma \ref{lmm:subcx-stab-van-fund-class}.
        \end{itemize}
        
        \begin{claim}
            We have an inclusion $\partial S_0\subseteq f_0\left(\partial\Sigma\right)$ (where $\partial S_0$ should be understood in the sense of Definition \ref{defi:boundary}).
        \end{claim}
        \begin{proof}[Proof of the claim]
            Let $e$ be an edge of $\partial S_0$.
            By definition, $e\subseteq S_0=\Imm f$, so there is a $1$-handle $H$ in $\Sigma$ that maps to $e$.
            Now each end of the $1$-handle $H$ can be either incident to a cellular disc or to $\partial\Sigma$.
            If each end is incident to a cellular disc, then those two cellular discs must map to the same $2$-cell $\sigma$ of $S_0$, because $e$ is incident to only one $2$-cell as it lies on $\partial S_0$.
            In this case, the two cellular discs map to $\sigma$ with opposite orientations (as in Figure \ref{fig:cells-opp-or}), which contradicts the non-folding property --- which $f$ has since it is in standard form.
            Therefore, at least one end of $H$ must be incident to $\partial\Sigma$.
            This implies that $e\subseteq f_0\left(\partial\Sigma\right)$.
        \end{proof}
        
        By assumption, $f_0\left(\partial\Sigma\right)\subseteq T_0$.
        Hence we have
        \[
            \partial S_0\subseteq f_0\left(\partial\Sigma\right)\subseteq T_0\subseteq S_0.
        \]
        
        With assumption \ref{thm:main-1}, we have $H_2\left(S_0,T_0;R\right)=0$, so it follows immediately from Lemma \ref{lmm:or-h2-zero} that every $2$-cell of $S_0$ is contained in $T_0$.
        
        With assumption \ref{thm:main-2}, we have ${f_0}_*[\Sigma]=0$ in $H_2\left(S_0,T_0;R\right)$.
        Let $\sigma$ be a $2$-cell in $S_0=\Imm f$.
        Recall that $f_0$ is in perfect standard form, so it is orientation-perfect.
        This means that all cellular discs of $\Sigma$ mapping to $\sigma$ do so with the same orientation.
        Hence, the image ${f_0}_*[\Sigma]$ in $H_2\left(S_0,T_0;R\right)$ has a term in $\sigma$ with nonzero coefficient.
        But ${f_0}_*[\Sigma]=0$ in $H_2\left(S_0,T_0;R\right)$, so we must have $\sigma\subseteq T_0$.
        This shows that every $2$-cell in $S_0$ is contained in $T_0$.
        
        \begin{claim}\label{claim:every-cell-incid-2cell}
            Every $0$- or $1$-cell of $S_0$ is incident to a $2$-cell of $S_0$.
        \end{claim}
        \begin{proof}[Proof of the claim]
            Note first that there is no isolated $0$-cell in $S_0$ since $f_0$ is incompressible.
            Now assume for contradiction that there is a $1$-cell $e\subseteq S_0$ without any incident $2$-cell.
            Let $H$ be a $1$-handle of $\Sigma$ mapping to $e$.
            Then both ends of $H$ lie on $\partial\Sigma$.
            Hence any vertex disc in $\Sigma$ incident to $H$ meets $\partial\Sigma$ on both sides of $H$.
            But links of vertex discs are connected since $f$ is in standard form, so neither of the vertex discs incident to $H$ is incident to any other $1$-handle --- see Figure \ref{fig:isolated-handle}.
            \begin{figure}[htb]
                \centering
                    \centering
                    \begin{tikzpicture}[every node/.style={rectangle,draw=none,fill=none},scale=.5]
                        \draw [draw=none,fill=gray!30] (-3.5,2.5) -- (-.5,1.5) -- (-.5,-1.5) -- (-3.5,-2.5) -- cycle;
                        \draw [draw=none,fill=gray!30] (3.5,2.5) -- (.5,1.5) -- (.5,-1.5) -- (3.5,-2.5) -- cycle;
                        \draw [ultra thick,->,red,shorten <= 1pt,shorten >= 1pt] (-.5,0) -- (.5,0) node [above left,xshift=2pt] {{\small$e$}};
                        \draw [ultra thick] (-3.5,2.5) -- (-.5,1.5) -- (.5,1.5) -- (3.5,2.5)
                            (3.5,-2.5) -- (.5,-1.5) -- (-.5,-1.5) -- (-3.5,-2.5)
                            (-3.5,2.5) -- (-3.5,-2.5) (3.5,2.5) -- (3.5,-2.5);
                        \draw [thick] (-.5,1.5) -- (-.5,-1.5) (.5,1.5) -- (.5,-1.5);
                        
                        \draw node at (-2,0) {$u$};
                        \draw node at (2,0) {$v$};
                        \draw node at (0,-3.5) {$\Sigma$};
                        \draw node at (-1,2.5) {$\partial\Sigma$};
                        
                        \draw [<-,dashed] (-.25,-.75) to [bend left=20] (-4.5,-1) node [left] {$H$};
                        
                        \draw node [draw,circle,fill,inner sep=1.5pt,label=above:$u$] (u) at (7.75,0) {};
                        \draw node [draw,circle,fill,inner sep=1.5pt,label=above:$v$] (v) at (10.25,0) {};
                        \draw [ultra thick,red] (u) -- (v) coordinate [midway] (mid1);
                        \draw [ultra thick,red,->] (u) -- (mid1) node [above] {{\small$e$}};
                        
                        \draw node at (9,-1.5) {$S_0$};
                        
                        \draw [->] (4.75,0) -- (6.25,0) node [midway,above] {$f$};
                    \end{tikzpicture}
                \caption{A $1$-handle mapping to an edge with no incident $2$-cell.}
                \label{fig:isolated-handle}
            \end{figure}
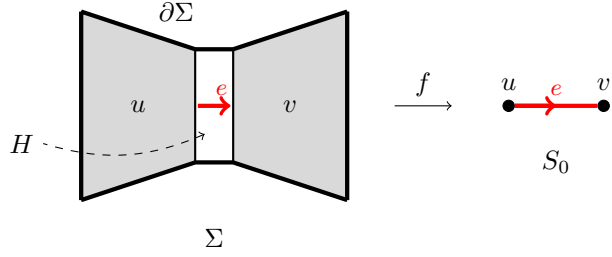
            Hence, $\Sigma$ has a disc component consisting of $H$ and its two incident vertex discs (note that these two discs are distinct by connectedness of their link).
            This is impossible since admissible surfaces in standard form are assumed to be disc- and sphere-free.
        \end{proof}

        Since every $2$-cell of $S_0$ is contained in $T_0$, it follows from Claim \ref{claim:every-cell-incid-2cell} that $S_0=T_0$, and therefore $\Imm f=S_0\subseteq T$ as wanted.
    \end{proof}
    
    \subsection{Isometric embeddings}\label{subsec:isom-emb}
    
    We now discuss applications of Theorem \ref{thm:main} to isometric embeddings for $\scl$ and $\gromnorm{\cdot}$.
    
    If $S$ is a surface, we say that a subsurface $T\subseteq S$ is \emph{$H_1$-injective} if the induced map $H_1(T)\rightarrow H_1(S)$ is injective.
    
    \begin{mthm}{A}[Isometric embedding for $\scl$]\label{thm:isom-scl}
        Let $S$ be an oriented, compact, connected surface with nonempty boundary, and let $T\subseteq S$ be a $H_1$-injective subsurface.
        Then the inclusion-induced map
        \[
            \iota:\pi_1T\hookrightarrow\pi_1S
        \]
        is a strong isometric embedding for $\scl$.
    \end{mthm}
    \begin{proof}
        Note first that $\pi_1T$ and $\pi_1S$ are free groups because $\partial S\neq\emptyset$.
        Since $\iota_*:H_1\left(\pi_1T\right)\rightarrow H_1\left(\pi_1S\right)$ is injective, we have $\rk\left(\pi_1T\right)=\rk\left(\Imm\iota\right)$, and it follows from the Hopf property for free groups that $\iota:\pi_1T\rightarrow\pi_1S$ is injective.
        
        Moreover, by the Universal Coefficient Theorem, the inclusion-induced map $H_1(T;\mathbb{Q})\rightarrow H_1(S;\mathbb{Q})$ is injective, or equivalently, the map
        \[
            \iota_*:H_1\left(\pi_1T;\mathbb{Q}\right)\rightarrow H_1\left(\pi_1S;\mathbb{Q}\right)
        \]
        is injective.
        
        It remains to show that $\iota$ preserves the stable commutator length of integral boundaries (strong isometry will follow from Proposition \ref{prop:isom-strong-isom}).
        
        Let $c\in B_1\left(\pi_1T;\mathbb{Z}\right)$.
        By Proposition \ref{prop:std-form}, $\scl_{\pi_1S}(c)$ can be computed as an infimum over all admissible surfaces $f:(\Sigma,\partial\Sigma)\rightarrow(S,c)$ in standard form.
        But since $S$ has nonempty boundary, $H_2(S)=0$, so injectivity of $H_1(T)\rightarrow H_1(S)$ implies that $H_2(S,T)=0$ by the long exact sequence of $(S,T)$.
        Therefore, by Theorem \ref{thm:main}, every admissible surface in standard form in $S$ can be homotoped to an admissible surface in $T$.
        It follows that
        \[
            \scl_{\pi_1T}(c)\leq\scl_{\pi_1S}(c),
        \]
        and the reverse inequality always holds by monotonicity of $\scl$ (see Proposition \ref{prop:monotonicity-scl}).
    \end{proof}
    
    In Theorem \ref{thm:isom-scl}, observe that $H_1$-injectivity of $T$ could be replaced with the equivalent assumption that $H_2(S,T)=0$.
    With that assumption, the theorem would also hold when $S$ is closed since in that case, $H_2(S,T)=0$ implies that $S=T$ by Corollary \ref{cor:rel-hom-srf}.
    However, this would only add a trivial statement and therefore the generality of the theorem would not be increased.
    
    Our second isometric embedding theorem is the following, which also applies --- and gives a nontrivial result --- in the closed case:
    
    \begin{mthm}{B}[Isometric embedding for the relative Gromov seminorm]\label{thm:isom-gromnorm}
        Let $S$ be an oriented, compact, connected surface, let $T\subseteq S$ be a $\pi_1$-injective subsurface, and let $c\in C_1\left(\pi_1T;\mathbb{Z}\right)$ be an integral chain in $T$.
        Then the inclusion-induced map
        \[
            \iota:H_2(T,c;\mathbb{Q})\hookrightarrow H_2(S,c;\mathbb{Q})
        \]
        is an injective isometric embedding for $\gromnorm{\cdot}$.
    \end{mthm}
    \begin{proof}
        Note that injectivity follows from the long exact sequence of the triple $(S,T,c)$ (see Proposition \ref{prop:les-rel-h}) since $H_3(S,T)=0$.
    
        Let $\alpha\in H_2(T,c;\mathbb{Q})$.
        Then Proposition \ref{prop:std-form} says that $\gromnorm{\iota\alpha}$ can be computed as an infimum over all admissible surfaces $f:(\Sigma,\partial\Sigma)\rightarrow(S,c)$ in perfect standard form.
        Let $f$ be such an admissible surface, with $f_*[\Sigma]=n(\Sigma)\iota\alpha$ in $H_2(S,c;\mathbb{Q})$.
        By the long exact sequence of the triple $(S,T,c)$ (see Proposition \ref{prop:les-rel-h}), $\iota\alpha$ maps to zero in $H_2(S,T;\mathbb{Q})$, and so the image of $f_*[\Sigma]$ in $H_2(S,T;\mathbb{Q})$ is also zero.
        Therefore, Theorem \ref{thm:main} applies and $f$ can be homotoped to an admissible surface in $T$.
        This proves that $\gromnorm{\alpha}\leq\gromnorm{\iota\alpha}$, and the reverse inequality always holds since $\gromnorm{\cdot}$ is monotone with respect to continuous maps.
    \end{proof}
    
    In fact, applying Theorem \ref{thm:isom-gromnorm} to the context of surfaces with nonempty boundary yields a stronger version of Theorem \ref{thm:isom-scl}:
    
    \begin{mcor}{C}\label{cor:isom-scl}
        Let $S$ be an oriented, compact, connected surface with nonempty boundary and let $T\subseteq S$ be a $\pi_1$-injective subsurface.
        Then the inclusion-induced map
        \[
            \iota:\pi_1T\hookrightarrow\pi_1S
        \]
        is an isometric embedding for $\scl$.
    \end{mcor}
    \begin{proof}
        The morphism $\iota$ is assumed to be injective, so it suffices to prove that it preserves the $\scl$ of homologically trivial $1$-chains.
        Let $c\in B_1\left(\pi_1T;\mathbb{Z}\right)$.
        Since $H_2(T)=0$, the long exact sequence of the pair $(T,c)$ (see Proposition \ref{prop:les-rel-h}) shows that there is a unique class $\alpha\in H_2\left(T,c;\mathbb{Q}\right)$ such that $\partial\alpha=\left[\coprod_iS^1\right]$.
        By Proposition \ref{prop:scl-inf-gt}, we have $4\scl_{\pi_1T}(c)=\gromnorm{\alpha}$.
        Similarly, $H_2(S)=0$ and $4\scl_{\pi_1S}(c)=\gromnorm{\iota\alpha}$.
        But Theorem \ref{thm:isom-gromnorm} implies that $\gromnorm{\alpha}=\gromnorm{\iota\alpha}$, so $\scl_{\pi_1T}(c)=\scl_{\pi_1S}(c)$.
    \end{proof}

\section{Extremal surfaces and quasimorphisms}\label{sec:extr}

    We conclude with a discussion of how our isometric embeddings of surfaces behave with respect to extremal surfaces and quasimorphisms.
    To be more precise, consider $\iota:G\hookrightarrow H$ an isometric embedding for $\scl$.
    Note in particular that $\iota$ is injective and induces an embedding $K(G,1)\hookrightarrow K(H,1)$.
    Given a $1$-chain $c\in C_1(G;\mathbb{Z})\hookrightarrow C_1(H;\mathbb{Z})$, our aim is to find conditions under which
    \begin{itemize}
        \item There is an extremal surface $\left(\Sigma,\partial\Sigma\right)\rightarrow\left(K(G,1),c\right)$ for $\scl_G(c)$ that is also extremal for $\scl_H(c)$, or
        \item There is an extremal quasimorphism $\varphi\in Q(H)$ for $\scl_H(c)$ that restricts to an extremal quasimorphism $\varphi_{\left|G\right.}\in Q(G)$ for $\scl_G(c)$.
    \end{itemize}
    We address these problems for the isometric embedding $\iota:\pi_1T\hookrightarrow\pi_1S$ of Theorem \ref{thm:isom-scl}.

    \subsection{Extremal surfaces}
    
    Recall that an \emph{extremal surface} for $\scl_{G}(c)$ is one that realises the infimum in Proposition \ref{prop:scl-topo}.
    A major result, due to Calegari \cite{cal-sclrat}, is that extremal surfaces exist for all $c\in B_1(G;\mathbb{Z})$ if $G$ is a free group.
    
    Proposition \ref{prop:std-form} says that, for the purpose of finding extremal surfaces, we can assume that admissible surfaces are in standard form --- but not necessarily in perfect standard form.
    In the context of Theorem \ref{thm:isom-scl}, this is sufficient: $H_1$-injectivity implies that $H_2(S,T)=0$ since $H_2(S)=0$, so Theorem \ref{thm:main} with assumption \ref{thm:main-1} says that any admissible surface $(\Sigma,\partial\Sigma)\rightarrow(S,c)$ in standard form is in fact contained in $T$.
    Note also that $\pi_1S$ and $\pi_1T$ are free groups, so extremal surfaces exist by Calegari's Theorem \cite{cal-sclrat}.
    This gives the following:
    
    \begin{cor}\label{cor:extr-srf}
        Let $S$ be an oriented, compact, connected surface with nonempty boundary, and let $T\subseteq S$ be a $H_1$-injective subsurface.
        Let $c\in B_1\left(\pi_1T;\mathbb{Z}\right)$.
        Then there exists an admissible surface $f:(\Sigma,\partial\Sigma)\rightarrow(T,c)$ that is extremal for both $\scl_{\pi_1T}(c)$ and $\scl_{\pi_1S}(c)$.\qed
    \end{cor}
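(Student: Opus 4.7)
The plan is to combine Calegari's rationality theorem with the standard form of Proposition \ref{prop:std-form}(ii) and Theorem \ref{thm:main}. The basic point is that once we have an extremal surface $f:(\Sigma,\partial\Sigma)\to(S,c)$ in $S$, we can put it into standard form without losing extremality, and then Theorem \ref{thm:main} forces $f(\Sigma)\subseteq T$.

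First, since $\partial S\neq\emptyset$, the group $\pi_1S$ is free, so Calegari's theorem \cite{cal-sclrat} guarantees the existence of an extremal admissible surface $f:(\Sigma,\partial\Sigma)\to(S,c)$ for $\scl_{\pi_1S}(c)$. Next, I would invoke Proposition \ref{prop:std-form}(ii) to replace $f$ with an extremal admissible surface in standard form (still mapping to $S$), keeping the same $\chi^-/n$ ratio.

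Then I check that the hypotheses of Theorem \ref{thm:main}(i) are met: since $S$ has nonempty boundary we have $H_2(S)=0$, and $H_1$-injectivity of $T\hookrightarrow S$ combined with the long exact sequence of the pair $(S,T)$ yields $H_2(S,T)=0$ (and this holds with $\mathbb{Z}$, $\mathbb{Q}$, or $\mathbb{R}$ coefficients). Since $f$ is in standard form and $f(\partial\Sigma)\subseteq T$ (the image of the boundary is a loop in $T$), Theorem \ref{thm:main} gives $f(\Sigma)\subseteq T$. Thus $f$ factors as an admissible surface $(\Sigma,\partial\Sigma)\to(T,c)$.

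Finally, by Theorem \ref{thm:isom-scl}, $\scl_{\pi_1T}(c)=\scl_{\pi_1S}(c)$. Since $f$ realises the infimum $\scl_{\pi_1S}(c)$ as a surface in $S$, and this quantity equals $\scl_{\pi_1T}(c)$, the same surface $f:(\Sigma,\partial\Sigma)\to(T,c)$ is automatically extremal for $\scl_{\pi_1T}(c)$ as well. There is no serious obstacle here: the key work has already been done in establishing the standard form (so that the topology of the admissible surface is controlled enough to apply Theorem \ref{thm:main}) and in Calegari's existence result for extremal surfaces in free groups; the corollary is essentially an assembly of these ingredients.
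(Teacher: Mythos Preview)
Your proposal is correct and follows essentially the same route as the paper: Calegari's rationality theorem gives an extremal surface in $S$, Proposition~\ref{prop:std-form}(ii) puts it in standard form, the long exact sequence with $H_2(S)=0$ and $H_1$-injectivity yields $H_2(S,T)=0$, and Theorem~\ref{thm:main}\ref{thm:main-1} then forces the image into $T$. The only cosmetic difference is that you close by invoking Theorem~\ref{thm:isom-scl} for the equality $\scl_{\pi_1T}(c)=\scl_{\pi_1S}(c)$, whereas the paper leaves this implicit (it follows directly from monotonicity once the extremal surface for $S$ lands in $T$).
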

    
    Note however that, even if extremal surfaces were known to exist for the relative Gromov seminorm, we would not obtain an analogue of Corollary \ref{cor:extr-srf} in that setting.
    Indeed, to prove Theorem \ref{thm:isom-gromnorm}, we needed to apply Theorem \ref{thm:main} with assumption \ref{thm:main-2} and work with admissible surfaces in perfect standard form.
    But an asymptotic promotion argument was necessary to obtain the perfect standard form (see \S{}\ref{subsec:or-perf}), and this does not preserve extremal surfaces.
    
    In the case where $S$ has nonempty boundary and the subsurface $T\subseteq S$ is only $\pi_1$-injective rather than $H_1$-injective, then Corollary \ref{cor:isom-scl} says that $\pi_1T\hookrightarrow\pi_1S$ is still an isometric embedding for $\scl$.
    In general, as Corollary \ref{cor:isom-scl} relies on Theorem \ref{thm:isom-gromnorm}, this isometric embedding might not preserve extremal surfaces, but in the special case where the $1$-chain $c$ consists of a single element $w$ of $\pi_1T$, then Remark \ref{rk:asymp-prom-not-nec} says that the asymptotic promotion argument can be bypassed, and therefore extremal surfaces can be assumed to be in perfect standard form.
    We obtain:
    
    \begin{cor}\label{cor:extr-srf-single-element}
        Let $S$ be an oriented, compact, connected surface with nonempty boundary, and let $T\subseteq S$ be a $\pi_1$-injective subsurface.
        Let $w\in\left[\pi_1T,\pi_1T\right]$.
        Then there exists an admissible surface $f:(\Sigma,\partial\Sigma)\rightarrow(T,w)$ that is extremal for both $\scl_{\pi_1T}(w)$ and $\scl_{\pi_1S}(w)$.\qed
    \end{cor}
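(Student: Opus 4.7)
The plan is to combine Calegari's rationality theorem with the single-element refinement of the standard form noted in Remark \ref{rk:asymp-prom-not-nec}, and then invoke Theorem \ref{thm:main}.\ref{thm:main-2} to force an extremal surface for $\scl_{\pi_1 S}(w)$ to lie entirely within $T$.

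First, since $\partial S\neq\emptyset$, the group $\pi_1 S$ is free; and as $w\in[\pi_1 T,\pi_1 T]\subseteq[\pi_1 S,\pi_1 S]=B_1(\pi_1 S;\mathbb{Z})$, Calegari's theorem \cite{cal-sclrat} produces an admissible surface $f:(\Sigma,\partial\Sigma)\to(S,w)$ extremal for $\scl_{\pi_1 S}(w)$. The final paragraph of \S{}\ref{sec:std-form}, together with Remark \ref{rk:asymp-prom-not-nec}, then lets me assume that $f$ is in perfect standard form without sacrificing extremality. This step is only legitimate because $c$ is a single element: the asymptotic promotion of \S{}\ref{subsec:or-perf} used in the general case would destroy extremality.

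To apply Theorem \ref{thm:main}.\ref{thm:main-2}, I then verify that $f_*[\Sigma]=0$ in $H_2(S,T;\mathbb{Q})$. Since $\pi_1 T$ embeds into the free group $\pi_1 S$, it is itself free, so $T$ has nonempty boundary and $H_2(T;\mathbb{Q})=H_2(S;\mathbb{Q})=0$. By Proposition \ref{prop:les-rel-h}.\ref{prop:les-rel-h-2}, both boundary maps $\partial:H_2(T,w;\mathbb{Q})\to H_1(S^1;\mathbb{Q})$ and $\partial:H_2(S,w;\mathbb{Q})\to H_1(S^1;\mathbb{Q})$ are then isomorphisms, so the inclusion-induced map $H_2(T,w;\mathbb{Q})\to H_2(S,w;\mathbb{Q})$ is an isomorphism as well; the long exact sequence of Proposition \ref{prop:les-rel-h}.\ref{prop:les-rel-h-3} then forces $f_*[\Sigma]$ to vanish in $H_2(S,T;\mathbb{Q})$.

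With the hypothesis in place, Theorem \ref{thm:main}.\ref{thm:main-2} yields $f(\Sigma)\subseteq T$, so $f$ defines an admissible surface $(\Sigma,\partial\Sigma)\to(T,w)$. Corollary \ref{cor:isom-scl} gives $\scl_{\pi_1 T}(w)=\scl_{\pi_1 S}(w)$, and since $f$ realises the latter, it is automatically extremal for the former too. I anticipate no serious obstacle: the heavy work has been done in Theorem \ref{thm:main} and in the observation behind Remark \ref{rk:asymp-prom-not-nec}. The only point requiring attention is the short homological computation certifying the hypothesis of the perfect-standard-form version of Theorem \ref{thm:main}; the reason the \emph{relative Gromov seminorm} analogue of the corollary is not claimed is precisely that asymptotic promotion is unavoidable in that setting, which would break extremality.
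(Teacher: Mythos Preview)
Your proposal is correct and follows essentially the same route as the paper: take an extremal surface for $\scl_{\pi_1 S}(w)$ (Calegari), put it in perfect standard form via Remark \ref{rk:asymp-prom-not-nec}, invoke Theorem \ref{thm:main}.\ref{thm:main-2}, and conclude via Corollary \ref{cor:isom-scl}. You actually supply more detail than the paper does, in particular the explicit homological check that $f_*[\Sigma]=0$ in $H_2(S,T;\mathbb{Q})$, which the paper leaves implicit in its reference back to the proof of Theorem \ref{thm:isom-gromnorm}.
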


    \subsection{Extremal quasimorphisms}
    
    Recall that a \emph{quasimorphism} on a group $G$ is a map $\phi:G\rightarrow\mathbb{R}$ such that
    \[
        \sup_{a,b\in G}\left|\phi(ab)-\phi(a)-\phi(b)\right|<\infty.
    \]
    This supremum is called the \emph{defect} of $\phi$ and denoted by $D(\phi)$.
    We say that $\phi$ is \emph{homogeneous} if $\phi\left(w^n\right)=n\phi(w)$ for all $w\in G$ and $n\in\mathbb{Z}$.
    We denote by $Q(G)$ the space of homogeneous quasimorphisms $G\rightarrow\mathbb{R}$.
    
    Given a quasimorphism $\phi:G\rightarrow\mathbb{R}$, we can naturally extend $\phi$ to a map $C_1\left(G;\mathbb{R}\right)\rightarrow\mathbb{R}$ by linearity.
    
    The connection between quasimorphisms and $\scl$ is given by the following result, which says essentially that $\left(Q(G),D(\cdot)\right)$ is the dual space of $\left(B_1(G;\mathbb{R}),\scl_G\right)$ (after quotienting by the kernels of the respective seminorms):
    
    \begin{prop}[Bavard Duality \cite{bavard}]\label{prop:bavard}
        Let $G$ be a group and $c\in C_1\left(G;\mathbb{R}\right)$ be a $1$-chain. Then
        \[
            \scl_G(c)=\sup_{\substack{\phi\in Q(G)\\D(\phi)\neq0}}\frac{\phi(c)}{2D(\phi)}.
        \]
    \end{prop}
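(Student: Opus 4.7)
The plan is to establish the two inequalities of the duality separately. The direction $\scl_G(c) \geq \sup_\phi \phi(c)/(2D(\phi))$ is elementary and combinatorial, while the reverse inequality is the heart of the proof and relies on the Hahn--Banach theorem.

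For the easy direction, I would fix $\phi \in Q(G)$ with $D(\phi) \neq 0$ and first establish $|\phi(w)| \leq 2 D(\phi) \scl_G(w)$ for $w \in G$, then extend to chains. The starting observation is that every homogeneous quasimorphism is class-invariant, which gives $|\phi([a,b])| \leq D(\phi)$ for all $a,b \in G$. If $w$ is a product of $g = \cl_G(w)$ commutators, then applying the defect inequality $g-1$ times to consolidate the product yields $|\phi(w)| \leq (2g-1) D(\phi)$. Homogeneity promotes this to a stable bound: $|\phi(w)| = |\phi(w^m)|/m \leq (2\cl_G(w^m) - 1) D(\phi)/m$, and letting $m \to \infty$ produces $|\phi(w)| \leq 2 D(\phi) \scl_G(w)$. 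For a general $1$-chain $c = \sum_i n_i w_i$, an analogous argument applies using the definition of $\cl_G$ on chains via simultaneous conjugations, after extending $\phi$ linearly to $C_1(G;\mathbb{R})$.

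For the reverse direction, the strategy is Hahn--Banach on a suitable seminormed quotient of $B_1(G;\mathbb{R})$. Let $K \subseteq C_1(G;\mathbb{R})$ be the subspace spanned by all chains of the form $hgh^{-1} - g$ (conjugation) and $g^n - ng$ (homogeneity), so that linear functionals on $C_1(G;\mathbb{R})/K$ correspond bijectively to class-invariant homogeneous functions $G \to \mathbb{R}$. The seminorm $\scl_G$ descends to the image of $B_1(G;\mathbb{R})$ in $C_1(G;\mathbb{R})/K$. Fix $c \in B_1(G;\mathbb{R})$ with $\scl_G(c) > 0$; the linear functional $tc \mapsto 2t\,\scl_G(c)$ on the line $\mathbb{R}\cdot c$ is dominated by $2\scl_G$, so by Hahn--Banach it extends to a linear functional $\tilde{L}$ on $B_1(G;\mathbb{R})/K$ still dominated by $2\scl_G$. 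Extending $\tilde L$ arbitrarily to $C_1(G;\mathbb{R})/K$ and pulling back yields a map $\phi:G \to \mathbb{R}$ that is automatically class-invariant and homogeneous.

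The main obstacle is showing that $\phi$ has defect at most $1$, so that together with $\phi(c) = 2\scl_G(c)$ it gives $\phi(c)/(2D(\phi)) \geq \scl_G(c)$. This requires the geometric input that for every $a,b \in G$, the chain $[a] + [b] + [(ab)^{-1}] \in C_1(G;\mathbb{R})$ satisfies $\scl_G \leq 1/2$, which is witnessed by a pair of pants (reduced Euler characteristic $-1$) sending its three boundary components to $a$, $b$, $(ab)^{-1}$ whose product is trivial. Since $[(ab)^{-1}] \equiv -[ab]$ modulo the homogeneity relations packaged in $K$, this descends to a bound $\scl_G([ab] - [a] - [b]) \leq 1/2$ in $C_1/K$, and therefore $|\phi(ab) - \phi(a) - \phi(b)| = |\tilde{L}([ab] - [a] - [b])| \leq 1$. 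Hence $D(\phi) \leq 1$, and the duality follows. The subtlety to watch is that the pair-of-pants bound lives in the quotient $C_1/K$ rather than in $C_1$ itself, so one must verify that $\scl_G$ indeed descends to a well-defined seminorm there before invoking Hahn--Banach.
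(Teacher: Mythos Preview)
The paper does not prove this proposition: it is stated as a cited result (Bavard \cite{bavard}) with no accompanying proof, so there is nothing in the paper to compare your argument against. Your outline is the standard proof of Bavard duality (essentially the one in Calegari's book \cite{cal-scl}*{\S{}2.6}), and the sketch is correct in its essentials; the only points requiring care are exactly the ones you flag --- that $\scl_G$ descends to a genuine seminorm on $B_1(G;\mathbb{R})/K$, and that the pair-of-pants estimate is applied in that quotient.
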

    
    A quasimorphism is called \emph{extremal} if it realises the supremum in Proposition \ref{prop:bavard}.
    As opposed to extremal surfaces, extremal quasimorphisms exist for all $1$-boundaries \cite{cal-scl}*{Proposition 2.88}, but finding an explicit extremal quasimorphism for a given element is usually a hard problem.
    There are however some results of this form; the following will be of particular interest to us:
    
    \begin{prop}[Calegari \cite{cal-fnb}]\label{prop:rot-extr}
        Let $S$ be a hyperbolic, compact, connected surface with $\partial S\neq\emptyset$. Let $c\in B_1\left(\pi_1S;\mathbb{Z}\right)$.
        Then the following are equivalent:
        \begin{enumerate}
            \item There is an admissible surface $f:(\Sigma,\partial\Sigma)\rightarrow(S,c)$ for $\scl_{\pi_1S}(c)$ that is immersed and orientation-preserving --- we say that $c$ \emph{rationally bounds a positive immersed surface}.
            \item The rotation quasimorphism $\rot_S$ is extremal for $c$.
        \end{enumerate}
    \end{prop}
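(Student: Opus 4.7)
The plan is to identify the rotation quasimorphism $\rot_S$ with a bounded-cohomological invariant coming from the hyperbolic structure on $S$, and then to run a Milnor--Wood / Gauss--Bonnet argument on admissible surfaces. Concretely, fix a hyperbolic structure on $S$; the holonomy gives a discrete faithful representation $\rho:\pi_1S\rightarrow\mathrm{PSL}_2\left(\mathbb{R}\right)\subseteq\mathrm{Homeo}^+\left(S^1\right)$. Lifting $\rho$ to the universal cover $\widetilde{\mathrm{PSL}_2\left(\mathbb{R}\right)}$ defines $\rot_S:\pi_1S\rightarrow\mathbb{R}$ as the translation number of the lift, and its coboundary is (up to normalisation) the bounded real Euler cocycle pulled back from $\mathrm{Homeo}^+\left(S^1\right)$. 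One then computes that $D\left(\rot_S\right)=\tfrac{1}{2}$ in standard normalisation.

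Given an admissible surface $f:\left(\Sigma,\partial\Sigma\right)\rightarrow\left(S,c\right)$ with $\partial f_*\left[\partial\Sigma\right]=n\left(\Sigma\right)\left[\coprod_iS^1\right]$, the composition $\rho\circ f_*$ defines a flat $\mathrm{PSL}_2\left(\mathbb{R}\right)$--bundle on $\Sigma$ together with a trivialisation on $\partial\Sigma$ coming from the lift along $c$. The relative Euler class of this bundle pairs with the fundamental class $\left[\Sigma,\partial\Sigma\right]$ to give $n\left(\Sigma\right)\rot_S\left(c\right)$, by a standard transgression computation. The Milnor--Wood inequality \textemdash{} or equivalently the Gauss--Bonnet theorem for flat $\mathrm{PSL}_2\left(\mathbb{R}\right)$--bundles \textemdash{} then yields
\[
    \left|n\left(\Sigma\right)\rot_S\left(c\right)\right|\leq-\chi^-\left(\Sigma\right),
\]
which, dividing by $2n\left(\Sigma\right)$, recovers exactly the inequality $\scl_{\pi_1S}\left(c\right)\geq\tfrac{\rot_S\left(c\right)}{2D\left(\rot_S\right)}$ predicted by Bavard duality (Proposition \ref{prop:bavard}).

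Extremality of $\rot_S$ for $c$ is by definition equality in this inequality, so it is equivalent to the existence of an admissible surface saturating the Milnor--Wood bound. The key classical input is the Goldman--Matsumoto theorem: a flat $\mathrm{PSL}_2\left(\mathbb{R}\right)$--bundle over a compact surface $\Sigma$ with boundary attains $\left|e\right|=-\chi\left(\Sigma\right)$ if and only if the holonomy is discrete and faithful, with boundary components mapping to hyperbolic elements whose axes are traversed in the correct direction. Translating this back through $f$, equality forces $\rho\circ f_*$ to be the holonomy of a hyperbolic structure on $\Sigma$, from which one deduces (after a small homotopy, using the fact that two hyperbolic structures with the same holonomy are isotopic) that $f$ can be realised as an orientation-preserving immersion $\Sigma\looparrowright S$. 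This gives both directions: (i) $\Rightarrow$ (ii) is the easy computation that a positive immersion saturates Milnor--Wood, and (ii) $\Rightarrow$ (i) combines Calegari's existence of extremal surfaces in the free group $\pi_1S$ with the Goldman--Matsumoto characterisation applied to an extremal $\Sigma$.

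The main obstacles are the identifications of cohomology classes: first, verifying that the lift-of-holonomy quasimorphism really equals $\rot_S$ as defined in the paper (rather than a bounded perturbation), and second, the careful version of Goldman--Matsumoto in the bounded setting \textemdash{} in particular that one can homotope an extremal admissible surface to a genuine immersion without increasing complexity. Handling boundary behaviour (making sure the representation sends each boundary loop to a hyperbolic element with the correct rotation rather than a parabolic or elliptic one) is the most delicate point, and is exactly where the hyperbolicity assumption on $S$ and the orientation-preserving condition enter.
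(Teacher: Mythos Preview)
The paper does not prove this proposition: it is stated as a result of Calegari \cite{cal-fnb} and cited without proof, so there is no in-paper argument to compare against. Your outline is broadly the right strategy and is essentially how the result is obtained in the literature (Euler class, Milnor--Wood, then Goldman's rigidity for the equality case), though Calegari's presentation leans on the area interpretation $\rot_S(c)=\tfrac{1}{2\pi}\area_S(c)$ and Gauss--Bonnet rather than explicitly invoking Goldman--Matsumoto.

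One concrete inconsistency: you assert $D(\rot_S)=\tfrac{1}{2}$, but the paper states (just after Proposition~\ref{prop:rot-extr}) that the defect of $\rot_S$ is $1$, and this is the normalisation used in the equality $\scl=\tfrac{1}{2}\rot$ later on. With $D=\tfrac{1}{2}$ your Bavard bound would read $\scl\geq\rot$, while your own Milnor--Wood computation yields $\scl\geq\tfrac{1}{2}\rot$; these do not match, so your normalisations are internally inconsistent. The fix is simply $D(\rot_S)=1$.

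On the substance of (ii) $\Rightarrow$ (i): you correctly flag that the delicate step is upgrading an extremal admissible surface to a genuine orientation-preserving immersion. Invoking ``Goldman--Matsumoto'' as a black box is reasonable for a sketch, but note that the standard statements are for closed surfaces; the version you need is relative (surfaces with boundary, holonomy of boundary curves prescribed), and one must also check that an extremal surface can be taken $\pi_1$-injective with hyperbolic boundary holonomy before the rigidity theorem applies. None of this is wrong, but as written it is a plan rather than a proof.
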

    
    The \emph{rotation quasimorphism} is an object that encodes the dynamics of the action of $\pi_1S$ on the boundary of the hyperbolic plane given by the choice of a hyperbolic structure.
    The defect of $\rot_S$ is always $1$.
    See \cite{cal-fnb} or \cite{cal-scl}*{\S{}2.3.3} for more details.
    Note that Proposition \ref{prop:rot-extr} applies in particular to the $1$-chain $c$ given by the (oriented) boundary of $S$.
    
    We make the following observation:
    
    \begin{prop}\label{prop:restr-rot}
        Let $S$ be a hyperbolic, compact, connected surface with $\partial S\neq\emptyset$, and let $T\subseteq S$ be a $H_1$-injective convex subsurface.
        Given $c\in B_1\left(\pi_1T;\mathbb{Z}\right)$, there is an equality
        \[
            \rot_T(c)=\rot_S(c).
        \]
    \end{prop}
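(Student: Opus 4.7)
The plan is to show that $\rot_T$ and $\rot_S$ arise from the same geometric data once we fix compatible hyperbolic structures on $T$ and $S$, and hence agree on $\pi_1 T$.

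First, I would fix a hyperbolic structure on $S$ with geodesic boundary in which $T$ is realised as a convex subsurface with geodesic boundary; this is possible by the convexity hypothesis on $T$. Restricting the metric gives a hyperbolic structure on $T$ with geodesic boundary. Passing to universal covers, $\widetilde{T}$ embeds as a $\pi_1 T$-invariant convex subset of $\widetilde{S} \subseteq \mathbb{H}^2$, and the Fuchsian representation $\rho_T : \pi_1 T \to \mathrm{PSL}_2(\mathbb{R})$ of $T$ coincides with the restriction $\rho_S \circ \iota$ of the Fuchsian representation of $S$.

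Next, I would use the fact that $\rot_S$ is the pullback along a canonical lift $\tilde\rho_S : \pi_1 S \to \widetilde{\mathrm{PSL}_2(\mathbb{R})}$---which exists because $\pi_1 S$ is free when $\partial S \neq \emptyset$---of the translation-number quasimorphism $\mathrm{trans}$ on $\widetilde{\mathrm{PSL}_2(\mathbb{R})}$, and similarly that $\rot_T$ is the pullback of $\mathrm{trans}$ along $\tilde\rho_T$. The crucial step will be to verify that these canonical lifts are compatible, i.e.\ that $\tilde\rho_S \circ \iota = \tilde\rho_T$. This is where the convexity really enters: the canonical lifts are pinned down by the requirement that each oriented boundary loop lift to a hyperbolic isometry with positive translation length equal to the length of the corresponding boundary geodesic, a condition that agrees whether a boundary loop of $T$ is also a boundary loop of $S$ or is a simple closed geodesic in the interior of $S$.

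Granting this compatibility, for any $w \in \pi_1 T$ one obtains
\[
    \rot_T(w) = \mathrm{trans}\bigl(\tilde\rho_T(w)\bigr) = \mathrm{trans}\bigl(\tilde\rho_S(\iota(w))\bigr) = \rot_S(\iota(w)) = \rot_S(w),
\]
and extending $\mathbb{Z}$-linearly to integral $1$-chains yields $\rot_T(c) = \rot_S(c)$ for every $c \in B_1(\pi_1 T; \mathbb{Z})$. The main obstacle is precisely the compatibility of the canonical lifts; once this is verified by the geometric argument sketched above, the rest of the proof is purely formal.
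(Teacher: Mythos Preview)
Your approach via the translation quasimorphism on $\widetilde{\mathrm{PSL}_2(\mathbb{R})}$ is correct and genuinely different from the paper's. The paper instead invokes the area interpretation $\rot_S(c)=\frac{1}{2\pi}\area_S(c)$ from \cite{cal-scl}*{Lemma 4.68}: one chooses a cellular $2$-chain $b$ in $S$ with $db=c$, and then uses $H_2(S,T)=0$ (a consequence of $H_1$-injectivity together with $\partial S\neq\emptyset$) to force $b\in C_2^{\textnormal{cell}}(T)$, so that the same chain computes both $\area_T(c)$ and $\area_S(c)$. This fits the homological style of the surrounding section; your argument is closer to the dynamical definition of $\rot$.

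That said, the step you flag as the ``main obstacle'' --- compatibility of the canonical lifts $\tilde\rho_T$ and $\tilde\rho_S\circ\iota$ --- is unnecessary, and your description of how the canonical lift is pinned down conflates the translation number on $\widetilde{\mathrm{PSL}_2(\mathbb{R})}$ (which is an integer for any lift of a hyperbolic element, since hyperbolic elements have fixed points on $S^1$) with the hyperbolic translation length along the axis. The shortcut is this: any two lifts of $\rho_T$ differ by a homomorphism $\pi_1T\to\mathbb{Z}$ into the centre, and since homogeneous quasimorphisms are additive along central elements, the two pullbacks of $\mathrm{trans}$ differ by that same homomorphism, which vanishes on $B_1(\pi_1T;\mathbb{Z})$. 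As $\tilde\rho_S\circ\iota$ is \emph{some} lift of $\rho_T$, this already yields $\rot_T(c)=\rot_S(c)$ for every $c\in B_1(\pi_1T;\mathbb{Z})$ with no further work. A pleasant byproduct is that this version of your argument never uses the $H_1$-injectivity hypothesis.
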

    \begin{proof}
        There is an interpretation of the rotation number in terms of the area enclosed by a chain: $\rot_S(c)=\frac{1}{2\pi}\area_S(c)$.
        We refer to \cite{cal-scl}*{Lemma 4.68} for full details, but it suffices for our purpose to say that $\area_S(c)=\sum_in_i\area\left(\sigma_i\right)$, where $b=\sum_in_i\sigma_i$ is a cellular $2$-chain in $S$ with $db=c$.
        Now $H_1$-injectivity of $T$ implies that $H_2(S,T)=0$ since $H_2(S)=0$, so the differential
        \[
            d:C^\textnormal{cell}_2(S,T)\rightarrow C^\textnormal{cell}_1(S,T)
        \]
        is injective.
        Hence, the fact that $db=c\in C^\textnormal{cell}_1(T)$ implies that $b\in C^\textnormal{cell}_2(T)$, and therefore $\area_T(c)=\sum_in_i\area\left(\sigma_i\right)=\area_S(c)$.
    \end{proof}
    
    It follows that, if $T$ is $H_1$-injective, then for any chain $c\in B_1(\pi_1T;\mathbb{Z})$ that rationally bounds a positive immersed surface in $T$, we have
    \[
        \scl_{\pi_1T}(c)=\frac{1}{2}\rot_T(c)=\frac{1}{2}\rot_S(c)\leq\scl_{\pi_1S}(c)\leq\scl_{\pi_1T}(c),
    \]
    so $\rot_S$ is an extremal quasimorphism for $c$ in $S$ and restricts to $\rot_T$, which is an extremal quasimorphism for $c$ in $T$.
    
    It is natural at this point to ask whether it is equivalent for a chain $c\in B_1(\pi_1T;\mathbb{Z})$ to rationally bound a positive immersed surface in $T$ or in $S$.
    Using the same kind of argument as in the proof of Proposition \ref{prop:restr-rot}, it is easy to see that this is true if the word `immersed' is replaced with `embedded'.
    In general, using Scott's theorem on subgroup separability of surface groups \cites{scott1,scott2}, we can lift an immersed surface to an embedded one, and thus obtain an affirmative answer:
    
    \begin{prop}\label{prop:rat-bnd-t-s}
        Let $S$ be a hyperbolic, compact, connected surface with $\partial S\neq\emptyset$, and let $T\subseteq S$ be a $H_1$-injective convex subsurface.
        Given $c\in B_1\left(\pi_1T;\mathbb{Z}\right)$, the following are equivalent:
        \begin{enumerate}
            \item $\rot_S$ is an extremal quasimorphism for $\scl_{\pi_1S}(c)$.\label{prop:rat-bnd-t-s-1}
            \item $c$ rationally bounds a positive immersed surface in $S$\label{prop:rat-bnd-t-s-2}.
            \item $\rot_T$ is an extremal quasimorphism for $\scl_{\pi_1T}(c)$\label{prop:rat-bnd-t-s-3}.
            \item $c$ rationally bounds a positive immersed surface in $T$\label{prop:rat-bnd-t-s-4}.
        \end{enumerate}
    \end{prop}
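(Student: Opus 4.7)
The plan is to split the four-way equivalence into two pairs linked by a bridge, using the $\rot$ and $\scl$ identities already at hand. First, Proposition \ref{prop:rot-extr} applied in $S$ gives (\ref{prop:rat-bnd-t-s-1}) $\Leftrightarrow$ (\ref{prop:rat-bnd-t-s-2}), and the same proposition applied in $T$ gives (\ref{prop:rat-bnd-t-s-3}) $\Leftrightarrow$ (\ref{prop:rat-bnd-t-s-4}). If $T$ has non-negative Euler characteristic, then $T$ is a disc or an annulus (since $\partial S \neq \emptyset$), $\pi_1T$ is abelian, and $\scl_{\pi_1T}(c) = 0 = \scl_{\pi_1S}(c)$; Bavard duality forces $\rot_T(c) = \rot_S(c) = 0$, and $c$ trivially rationally bounds a positive immersed surface in each of $T$ and $S$, so all four statements hold. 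Henceforth assume that $T$ is hyperbolic, so that Proposition \ref{prop:rot-extr} applies verbatim in $T$.

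The quickest way to close the loop is to prove (\ref{prop:rat-bnd-t-s-1}) $\Leftrightarrow$ (\ref{prop:rat-bnd-t-s-3}). By Proposition \ref{prop:restr-rot}, $\rot_T(c) = \rot_S(c)$, and by Corollary \ref{cor:isom-scl}, $\scl_{\pi_1T}(c) = \scl_{\pi_1S}(c)$. Since both rotation quasimorphisms have defect $1$, Bavard duality (Proposition \ref{prop:bavard}) identifies extremality of $\rot_S$ for $c$ in $S$ with the numerical equality $\rot_S(c) = 2\scl_{\pi_1S}(c)$, and similarly for $\rot_T$; the two identities above make these two equalities coincide, yielding (\ref{prop:rat-bnd-t-s-1}) $\Leftrightarrow$ (\ref{prop:rat-bnd-t-s-3}) and hence the full four-way equivalence.

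Alternatively, as the author indicates, one can prove (\ref{prop:rat-bnd-t-s-2}) $\Leftrightarrow$ (\ref{prop:rat-bnd-t-s-4}) geometrically. The direction (\ref{prop:rat-bnd-t-s-4}) $\Rightarrow$ (\ref{prop:rat-bnd-t-s-2}) is immediate via the inclusion $T \hookrightarrow S$. For the converse, one first observes that the \emph{embedded} version follows from the chain-level argument of Proposition \ref{prop:restr-rot}: the image of a positive embedded surface is a positive cellular $2$-chain $b$ in $S$ with $db \in C_1^{\textnormal{cell}}(T)$, and injectivity of $d \colon C_2^{\textnormal{cell}}(S,T) \to C_1^{\textnormal{cell}}(S,T)$ (coming from $H_2(S,T) = 0$) forces $b \in C_2^{\textnormal{cell}}(T)$. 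For the general immersed case, Scott's subgroup separability theorem provides a finite cover $p \colon \tilde S \to S$ in which $f \colon \Sigma \to S$ lifts to an embedding $\tilde f \colon \Sigma \hookrightarrow \tilde S$ with boundary in $\tilde T := p^{-1}(T)$; applying the embedded argument in $\tilde S$ and then projecting down via $p$ produces a positive immersed surface in $T$ bounding a multiple of $c$.

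The main technical obstacle in the Scott-theoretic route is verifying that $\tilde T$ is $H_1$-injective inside $\tilde S$ so that $H_2(\tilde S, \tilde T; \mathbb{Q}) = 0$, which is delicate because $\tilde T$ may split into several components whose individual $H_1$-injectivity must be tracked (for instance through a transfer argument). The abstract route through (\ref{prop:rat-bnd-t-s-1}) $\Leftrightarrow$ (\ref{prop:rat-bnd-t-s-3}) sidesteps this complication entirely, so this is the proof I would write up.
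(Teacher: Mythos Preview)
Your proof is correct, but your primary route differs from the paper's. The paper also establishes \ref{prop:rat-bnd-t-s-1}$\Leftrightarrow$\ref{prop:rat-bnd-t-s-2} and \ref{prop:rat-bnd-t-s-3}$\Leftrightarrow$\ref{prop:rat-bnd-t-s-4} via Proposition~\ref{prop:rot-extr}, but then closes the loop by proving \ref{prop:rat-bnd-t-s-2}$\Rightarrow$\ref{prop:rat-bnd-t-s-4} geometrically through Scott's theorem --- exactly what you relegate to an ``alternative''. Conversely, your main route via \ref{prop:rat-bnd-t-s-1}$\Leftrightarrow$\ref{prop:rat-bnd-t-s-3} (using Proposition~\ref{prop:restr-rot} and the isometric embedding result) is what the paper records only as a remark after the proof. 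The paper explicitly prefers the Scott route because it is \emph{independent} of Theorem~\ref{thm:isom-scl}; your route, while cleaner, makes the proposition a corollary of the main theorems rather than a self-contained statement.

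On the technical point you flag: the paper dispatches the $H_2(\tilde S,\tilde T)=0$ issue in the cover not by a transfer argument but by a direct topological observation --- $H_2(S,T)=0$ is equivalent to every component of $\overline{S\smallsetminus T}$ meeting $\partial S$, and this property manifestly lifts to finite covers. So the obstacle you describe as ``delicate'' is in fact handled in one sentence, which somewhat undercuts your justification for preferring the abstract route.
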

    \begin{proof}
        The equivalences $\ref{prop:rat-bnd-t-s-1}\Leftrightarrow\ref{prop:rat-bnd-t-s-2}$ and $\ref{prop:rat-bnd-t-s-3}\Leftrightarrow\ref{prop:rat-bnd-t-s-4}$ follow from Proposition \ref{prop:rot-extr}.
        It is clear that $\ref{prop:rat-bnd-t-s-4}\Rightarrow\ref{prop:rat-bnd-t-s-2}$ since $T\subseteq S$, so it remains to prove that $\ref{prop:rat-bnd-t-s-2}\Rightarrow\ref{prop:rat-bnd-t-s-4}$.
        
        Assume that \ref{prop:rat-bnd-t-s-2} holds: there is an admissible surface $f:\left(\Sigma,\partial\Sigma\right)\rightarrow(S,c)$ for $\scl_{\pi_1S}(c)$ that is immersed and orientation-preserving.
        It follows from Scott's Theorem \cites{scott1,scott2} that there is a finite covering $g:S_0\rightarrow S$ over which $f$ lifts as an embedding $f_0:\Sigma\hookrightarrow S_0$:
        \[
            \vcenter{\hbox{\begin{tikzpicture}[every node/.style={draw=none,fill=none,rectangle}]
                \node (B) at (2,1.5) {$S_0$};
                \node (Ap) at (0,0) {$\Sigma$};
                \node (Bp) at (2,0) {$S$};
                
                \draw [->] (Ap) -> (B) node [midway,above left] {$f_0$};
                \draw [->] (Ap) -> (Bp) node [midway,above] {$f$};
                \draw [->] (B) -> (Bp) node [midway,right] {$g$};
            \end{tikzpicture}}}
        \]
        Consider $T_0=g^{-1}\left(T\right)\subseteq S_0$.
        Since $T$ is $H_1$-injective, it follows that $H_2(S,T)=0$.
        This means that every connected component of $\overline{S\smallsetminus T}$ contains at least one boundary component of $S$.
        This property lifts to a finite cover, so $H_2\left(S_0,T_0\right)=0$.
        
        Now we have an embedded surface $f_0:\Sigma\hookrightarrow S_0$.
        For an appropriate cellular structure on $S_0$, this embedding gives rise to a cellular $2$-chain $b\in C_2^\textnormal{cell}\left(S_0\right)$ with $db\in C_1^\textnormal{cell}\left(T_0\right)$.
        The same argument as in the proof of Proposition \ref{prop:restr-rot} then gives $b\in C_2^\textnormal{cell}\left(T_0\right)$.
        As $f_0$ is an embedding, this implies that $f_0(\Sigma)\subseteq T_0$, so $f(\Sigma)\subseteq T$ and $c$ rationally bounds a positive immersed surface in $T$.
    \end{proof}
    
    \begin{rmk}
        We could also have used Theorem \ref{thm:isom-scl} to prove that $\ref{prop:rat-bnd-t-s-1}\Rightarrow\ref{prop:rat-bnd-t-s-3}$ in Proposition \ref{prop:rat-bnd-t-s}.
        Indeed, assume that $\rot_S$ is extremal for $\scl_{\pi_1S}(c)$.
        Recall that $\pi_1T\hookrightarrow\pi_1S$ is isometric and that $\rot_S$ and $\rot_T$ agree on $\pi_1T$ (by Proposition \ref{prop:restr-rot}).
        Therefore
        \[
            \scl_{\pi_1T}(c)=\scl_{\pi_1S}(c)=\frac{\rot_S(c)}{2}=\frac{\rot_T(c)}{2},
        \]
        so that $\rot_T$ is extremal for $\scl_{\pi_1T}(c)$.
        
        However, the previous proof, using Scott's Theorem, has the advantage of being independent of Theorem \ref{thm:isom-scl}.
    \end{rmk}
    
    This does not say whether the isometric embedding of Theorem \ref{thm:isom-scl} respects extremal quasimorphisms for all $1$-chains $c$, but it does for some of them:
    
    \begin{cor}
        Let $S$ be a hyperbolic, compact, connected surface with $\partial S\neq\emptyset$, and let $T\subseteq S$ be a $H_1$-injective convex subsurface.
        Let $c\in B_1\left(\pi_1T;\mathbb{Z}\right)$.
        
        If $c$ rationally bounds a positive immersed surface in $S$, then the rotation quasimorphism $\rot_S\in Q\left(\pi_1S\right)$ is extremal for $\scl_{\pi_1S}(c)$, and restricts to $\rot_T\in Q\left(\pi_1T\right)$ which is extremal for $\scl_{\pi_1T}(c)$.\qed
    \end{cor}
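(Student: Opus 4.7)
The statement is essentially a packaging of the equivalences in Proposition \ref{prop:rat-bnd-t-s} together with Proposition \ref{prop:restr-rot}, and the plan is simply to chain these facts together. First, the assumption that $c$ rationally bounds a positive immersed surface in $S$ is condition \ref{prop:rat-bnd-t-s-2} of Proposition \ref{prop:rat-bnd-t-s}, so by the equivalence $\ref{prop:rat-bnd-t-s-2}\Leftrightarrow\ref{prop:rat-bnd-t-s-1}$ we immediately obtain that $\rot_S$ is extremal for $\scl_{\pi_1S}(c)$.

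Next, I would invoke the implication $\ref{prop:rat-bnd-t-s-2}\Rightarrow\ref{prop:rat-bnd-t-s-4}$ of the same proposition (this is the nontrivial direction that used Scott's subgroup separability theorem to lift the immersed admissible surface $\Sigma\to S$ to an embedding into a finite cover, then confined it to the preimage of $T$). This yields a positive immersed surface for $c$ inside $T$, and then applying $\ref{prop:rat-bnd-t-s-4}\Leftrightarrow\ref{prop:rat-bnd-t-s-3}$ concludes that $\rot_T$ is extremal for $\scl_{\pi_1T}(c)$.

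Finally, I would note that by Proposition \ref{prop:restr-rot} the quasimorphism $\rot_S$ restricted to $\pi_1T$ coincides with $\rot_T$, so $\rot_S$ genuinely restricts to the extremal quasimorphism identified in $\pi_1T$. No step here is an obstacle, since all the analytic content has already been absorbed into Propositions \ref{prop:restr-rot} and \ref{prop:rat-bnd-t-s}; the corollary is really just a clean restatement, and the only point to be careful about is to cite the correct implications of Proposition \ref{prop:rat-bnd-t-s} rather than redoing the Scott covering argument.
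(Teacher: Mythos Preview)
Your proposal is correct and matches the paper's intent: the corollary is marked \qed{} with no explicit proof, signalling that it follows immediately from Propositions~\ref{prop:restr-rot} and~\ref{prop:rat-bnd-t-s} exactly as you describe. Your chaining of the implications $\ref{prop:rat-bnd-t-s-2}\Rightarrow\ref{prop:rat-bnd-t-s-1}$, $\ref{prop:rat-bnd-t-s-2}\Rightarrow\ref{prop:rat-bnd-t-s-3}$, together with the identification $\rot_S|_{\pi_1T}=\rot_T$ from Proposition~\ref{prop:restr-rot}, is precisely the intended argument.
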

    
    This gives an alternative proof that the embedding $\pi_1T\hookrightarrow\pi_1S$ preserves the stable commutator length of every $c\in B_1\left(\pi_1T;\mathbb{Z}\right)$ which rationally bounds a positive immersed surface in $S$, independently of Theorem \ref{thm:isom-scl}.

\bibliography{PhD-Refs.bib}

\end{document}